\documentclass[12pt,a4paper]{amsart}
\usepackage[utf8]{inputenc}%
\usepackage[T1]{fontenc}%
\usepackage[english]{babel}%
\usepackage[margin=2cm]{geometry}%
\usepackage{dsfont}%
\usepackage{graphicx}
\usepackage{amsmath,amssymb,amsthm}%
\usepackage{color}%
\usepackage{float}%
\usepackage{ifthen}%
\usepackage{hyperref}%
\usepackage{tikz}
\usepackage{pgfplots}
\usepackage{mathtools}
\usepackage{booktabs}
\usepackage{pdfpages}
\usepackage[onehalfspacing]{setspace}
\usepackage{amssymb,amsmath,amsfonts}
\usepackage{amsthm}
\usepackage{doi}
\usepackage{hyperref}
\hypersetup{
    hidelinks,
    plainpages=false
}
\usepackage{microtype}
\usepackage{tcolorbox}
\usepackage{enumitem}
\usepackage{longtable}
\usepackage{csquotes}%
\usepackage[
    backend=biber,
    giveninits=true,
    sorting=nyt,
    doi=false,
    url=false,
    isbn=false,
    maxnames=10,
    minnames=1
]{biblatex}
\usepackage[labelformat=simple]{subcaption}

\pgfplotsset{compat=newest}
\usepgfplotslibrary{external} 
\usepackage{mathtools}
\usepgfplotslibrary{colormaps}
\usepgfplotslibrary{groupplots} 
\usetikzlibrary{pgfplots.groupplots} 
\usepackage{multirow}
\usetikzlibrary{decorations.markings}
\pgfplotsset{compat=1.18}

\definecolor{carrotorange}{rgb}{0.93, 0.57, 0.13}
\definecolor{kellygreen}{rgb}{0.3, 0.73, 0.09}
\definecolor{brightcerulean}{rgb}{0.11, 0.67, 0.84} 
\definecolor{cardinal}{rgb}{0.77, 0.12, 0.23} 
\definecolor{tablegray}{gray}{0.95}
	
\def\eff{\mathrm{eff}}
\def\ex{\mathrm{ex}}
\def\ext{\mathrm{ext}}

\def\and{\mathrm{and~}}

\def\trp{\top}

\newcommand{\BDF}[1]{\textnormal{BDF}\ensuremath{(#1)}}

\newcommand{\seq}[3]{\{#1\}_{n=#2}^{#3}}

\newcommand{\noopsort}[1]{}

\def\qq{\qquad}
\def\qqq{\qquad\qq}

\newcommand*\dt{\mathop{}\!\mathrm{d}t}

\newcommand{\traceopD}{\gamma_{\textrm{D}}}
\newcommand{\traceopN}{\gamma_{\textrm{N}}}

\newcommand{\R}{\mathbb{R}} 
\renewcommand{\S}{\mathbb{S}} 
\newcommand{\N}{\mathbb{N}} 

\newcommand{\norm}[1]{\lVert #1 \rVert}

\newcommand{\dual}[1]{\langle #1 \rangle}

\def\grad{\boldsymbol{\nabla}}%
\newcommand{\jump}[1]{ [\![#1]\!]_E}

\newcommand{\bLproj}[1]{\ensuremath{\bP_0^{#1}}} 

\def\ee{\mathrm{e}}%
\def\de{\,\mathrm{d}}%
\def\ptl{\partial}%

\renewcommand{\bar}{\overline}
\renewcommand{\hat}{\widehat}
\renewcommand{\tilde}{\widetilde}
\renewcommand{\geq}{\ge}

\theoremstyle{plain}
\newtheorem{theorem}{Theorem}

\newtheorem{Lemma}[theorem]{Lemma}

\newtheorem{Definition}[theorem]{Definition}
\newtheorem{Remark}[theorem]{Remark}

\newtheorem{Assumption}[theorem]{Assumption}
\numberwithin{equation}{section}
\numberwithin{table}{section}
\numberwithin{figure}{section}

\def\AA{{\mathcal A}}

\def\EE{{\mathcal E}}
\def\CC{{\mathcal C}}

\def\II{{\mathcal I}}

\def\PP{{\mathcal P}}
\def\RR{{\mathcal R}}
\def\SS{{\mathcal S}}

\def\QQ{{\mathcal Q}}

\def\TT{{\mathcal T}}
\def\FF{{\mathcal F}}

\def\errM{\boldsymbol{\mathcal{E}}}
\def\errm{\boldsymbol{\epsilon}}
\def\errw{\boldsymbol{\tilde \epsilon}}
\def\errW{\boldsymbol{\tilde{\mathcal{E}}}}
\def\errlam{\varepsilon}
\def\ba{\boldsymbol{a}}%
\def\bb{\boldsymbol{b}}%
\def\bc{\boldsymbol{c}}%
\def\be{\boldsymbol{e}}%
\def\bff{\boldsymbol{f}}%
\def\bH{\boldsymbol{H}}%
\def\bh{\boldsymbol{h}}%
\def\bI{\boldsymbol{I}}%
\def\bL{\boldsymbol{L}}%
\def\bM{\boldsymbol{M}}%
\def\bm{\boldsymbol{m}}%
\def\bn{\boldsymbol{n}}%
\def\bP{\boldsymbol{P}}%
\def\bq{\boldsymbol{q}}%
\def\bRR{\boldsymbol{\RR}}%
\def\br{\boldsymbol{r}}%
\def\bT{\boldsymbol{T}}%
\def\bV{\boldsymbol{V}}%
\def\bv{\boldsymbol{v}}%
\def\bW{\boldsymbol{W}}%
\def\bw{\boldsymbol{w}}%
\def\bx{\boldsymbol{x}}%
\def\bphi{\boldsymbol{\varphi}}%
\def\bPsi{\boldsymbol{\Psi}}%
\def\b0{\boldsymbol{0}}%
\def\Hsym{H}
\def\bHsym{\bH}
\def\Wsym{W}
\def\bWsym{\bW}
\def\Lsym{L}
\def\bLsym{\bL}

\def\bTsym{\bT}

\newcommand{\bWkp}[2]{\bWsym^{#1, #2}}%
\newcommand{\bWspacekp}[2]{\bWsym^{#1, #2}(\Omega)}%

\newcommand{\bLspace}{\bLsym^{2}(\Omega)}
\newcommand{\Hspace}{\Hsym^1(\Omega)}
\newcommand{\bHspace}{\bHsym^1(\Omega)}
\newcommand{\bHk}[1]{\bHsym^{#1}}%

\newcommand{\bLinfspace}{\bLsym^{\infty}(\Omega)}
\newcommand{\Linf}{\Lsym^{\infty}}
\newcommand{\bLinf}{\bLsym^{\infty}}
\newcommand{\spacelam}[1]{V_h^{#1}}%

\newcommand{\etainfty}[1]{\eta(#1; \Linf)}
\newcommand{\etaWinfty}[1]{\eta(#1; \Wsym^{1,\infty})}
\newcommand{\FEMspacep}[1]{\SS_h^{#1}}

\newcommand{\FEMedges}{\Sigma_n}
\newcommand{\FEMinneredges}{\Sigma_n^\circ}

\definecolor{kit-green}{RGB}{0, 150, 130}
\definecolor{kit-green100}{RGB}{0, 150, 130}
\definecolor{kit-green90}{rgb}{0.1, 0.6294, 0.5588}
\definecolor{kit-green80}{rgb}{0.2, 0.6706, 0.6078}
\definecolor{kit-green75}{rgb}{0.25, 0.6912, 0.6324}
\definecolor{kit-green70}{rgb}{0.3, 0.7118, 0.6569}
\definecolor{kit-green60}{rgb}{0.4, 0.7529, 0.7059}
\definecolor{kit-green50}{rgb}{0.5, 0.7941, 0.7549}
\definecolor{kit-green40}{rgb}{0.6, 0.8353, 0.8039}
\definecolor{kit-green30}{rgb}{0.7, 0.8765, 0.8529}
\definecolor{kit-green25}{rgb}{0.75, 0.8971, 0.8775}
\definecolor{kit-green20}{rgb}{0.8, 0.9176, 0.902}
\definecolor{kit-green15}{rgb}{0.85, 0.9382, 0.9265}
\definecolor{kit-green10}{rgb}{0.9, 0.9588, 0.951}
\definecolor{kit-green5}{rgb}{0.95, 0.9794, 0.9755}

\definecolor{kit-blue}{RGB}{70, 100, 170}
\definecolor{kit-blue100}{RGB}{70, 100, 170}
\definecolor{kit-blue90}{rgb}{0.3471, 0.4529, 0.7}
\definecolor{kit-blue80}{rgb}{0.4196, 0.5137, 0.7333}
\definecolor{kit-blue75}{rgb}{0.4559, 0.5441, 0.75}
\definecolor{kit-blue70}{rgb}{0.4922, 0.5745, 0.7667}
\definecolor{kit-blue60}{rgb}{0.5647, 0.6353, 0.8}
\definecolor{kit-blue50}{rgb}{0.6373, 0.6961, 0.8333}
\definecolor{kit-blue40}{rgb}{0.7098, 0.7569, 0.8667}
\definecolor{kit-blue30}{rgb}{0.7824, 0.8176, 0.9}
\definecolor{kit-blue25}{rgb}{0.8186, 0.848, 0.9167}
\definecolor{kit-blue20}{rgb}{0.8549, 0.8784, 0.9333}
\definecolor{kit-blue15}{rgb}{0.8912, 0.9088, 0.95}
\definecolor{kit-blue10}{rgb}{0.9275, 0.9392, 0.9667}
\definecolor{kit-blue5}{rgb}{0.9637, 0.9696, 0.9833}

\definecolor{kit-red}{RGB}{162, 34, 35}
\definecolor{kit-red100}{RGB}{162, 34, 35}
\definecolor{kit-red90}{rgb}{0.6718, 0.22, 0.2235}
\definecolor{kit-red80}{rgb}{0.7082, 0.3067, 0.3098}
\definecolor{kit-red75}{rgb}{0.7265, 0.35, 0.3529}
\definecolor{kit-red70}{rgb}{0.7447, 0.3933, 0.3961}
\definecolor{kit-red60}{rgb}{0.7812, 0.48, 0.4824}
\definecolor{kit-red50}{rgb}{0.8176, 0.5667, 0.5686}
\definecolor{kit-red40}{rgb}{0.8541, 0.6533, 0.6549}
\definecolor{kit-red30}{rgb}{0.8906, 0.74, 0.7412}
\definecolor{kit-red25}{rgb}{0.9088, 0.7833, 0.7843}
\definecolor{kit-red20}{rgb}{0.9271, 0.8267, 0.8275}
\definecolor{kit-red15}{rgb}{0.9453, 0.87, 0.8706}
\definecolor{kit-red10}{rgb}{0.9635, 0.9133, 0.9137}
\definecolor{kit-red5}{rgb}{0.9818, 0.9567, 0.9569}

\definecolor{kit-yellow}{RGB}{252, 229, 0}
\definecolor{kit-yellow100}{RGB}{252, 229, 0}
\definecolor{kit-yellow90}{rgb}{0.9894, 0.9082, 0.1}
\definecolor{kit-yellow80}{rgb}{0.9906, 0.9184, 0.2}
\definecolor{kit-yellow75}{rgb}{0.9912, 0.9235, 0.25}
\definecolor{kit-yellow70}{rgb}{0.9918, 0.9286, 0.3}
\definecolor{kit-yellow60}{rgb}{0.9929, 0.9388, 0.4}
\definecolor{kit-yellow50}{rgb}{0.9941, 0.949, 0.5}
\definecolor{kit-yellow40}{rgb}{0.9953, 0.9592, 0.6}
\definecolor{kit-yellow30}{rgb}{0.9965, 0.9694, 0.7}
\definecolor{kit-yellow25}{rgb}{0.9971, 0.9745, 0.75}
\definecolor{kit-yellow20}{rgb}{0.9976, 0.9796, 0.8}
\definecolor{kit-yellow15}{rgb}{0.9982, 0.9847, 0.85}
\definecolor{kit-yellow10}{rgb}{0.9988, 0.9898, 0.9}
\definecolor{kit-yellow5}{rgb}{0.9994, 0.9949, 0.95}

\definecolor{kit-orange}{RGB}{223, 155, 27}
\definecolor{kit-orange100}{RGB}{223, 155, 27}
\definecolor{kit-orange90}{rgb}{0.8871, 0.6471, 0.1953}
\definecolor{kit-orange80}{rgb}{0.8996, 0.6863, 0.2847}
\definecolor{kit-orange75}{rgb}{0.9059, 0.7059, 0.3294}
\definecolor{kit-orange70}{rgb}{0.9122, 0.7255, 0.3741}
\definecolor{kit-orange60}{rgb}{0.9247, 0.7647, 0.4635}
\definecolor{kit-orange50}{rgb}{0.9373, 0.8039, 0.5529}
\definecolor{kit-orange40}{rgb}{0.9498, 0.8431, 0.6424}
\definecolor{kit-orange30}{rgb}{0.9624, 0.8824, 0.7318}
\definecolor{kit-orange25}{rgb}{0.9686, 0.902, 0.7765}
\definecolor{kit-orange20}{rgb}{0.9749, 0.9216, 0.8212}
\definecolor{kit-orange15}{rgb}{0.9812, 0.9412, 0.8659}
\definecolor{kit-orange10}{rgb}{0.9875, 0.9608, 0.9106}
\definecolor{kit-orange5}{rgb}{0.9937, 0.9804, 0.9553}

\definecolor{kit-lightgreen}{RGB}{140, 182, 60}
\definecolor{kit-lightgreen100}{RGB}{140, 182, 60}
\definecolor{kit-lightgreen90}{rgb}{0.5941, 0.7424, 0.3118}
\definecolor{kit-lightgreen80}{rgb}{0.6392, 0.771, 0.3882}
\definecolor{kit-lightgreen75}{rgb}{0.6618, 0.7853, 0.4265}
\definecolor{kit-lightgreen70}{rgb}{0.6843, 0.7996, 0.4647}
\definecolor{kit-lightgreen60}{rgb}{0.7294, 0.8282, 0.5412}
\definecolor{kit-lightgreen50}{rgb}{0.7745, 0.8569, 0.6176}
\definecolor{kit-lightgreen40}{rgb}{0.8196, 0.8855, 0.6941}
\definecolor{kit-lightgreen30}{rgb}{0.8647, 0.9141, 0.7706}
\definecolor{kit-lightgreen25}{rgb}{0.8873, 0.9284, 0.8088}
\definecolor{kit-lightgreen20}{rgb}{0.9098, 0.9427, 0.8471}
\definecolor{kit-lightgreen15}{rgb}{0.9324, 0.9571, 0.8853}
\definecolor{kit-lightgreen10}{rgb}{0.9549, 0.9714, 0.9235}
\definecolor{kit-lightgreen5}{rgb}{0.9775, 0.9857, 0.9618}

\definecolor{kit-purple}{RGB}{163, 16, 124}
\definecolor{kit-purple100}{RGB}{163, 16, 124}
\definecolor{kit-purple90}{rgb}{0.6753, 0.1565, 0.5376}
\definecolor{kit-purple80}{rgb}{0.7114, 0.2502, 0.589}
\definecolor{kit-purple75}{rgb}{0.7294, 0.2971, 0.6147}
\definecolor{kit-purple70}{rgb}{0.7475, 0.3439, 0.6404}
\definecolor{kit-purple60}{rgb}{0.7835, 0.4376, 0.6918}
\definecolor{kit-purple50}{rgb}{0.8196, 0.5314, 0.7431}
\definecolor{kit-purple40}{rgb}{0.8557, 0.6251, 0.7945}
\definecolor{kit-purple30}{rgb}{0.8918, 0.7188, 0.8459}
\definecolor{kit-purple25}{rgb}{0.9098, 0.7657, 0.8716}
\definecolor{kit-purple20}{rgb}{0.9278, 0.8125, 0.8973}
\definecolor{kit-purple15}{rgb}{0.9459, 0.8594, 0.9229}
\definecolor{kit-purple10}{rgb}{0.9639, 0.9063, 0.9486}
\definecolor{kit-purple5}{rgb}{0.982, 0.9531, 0.9743}

\definecolor{kit-brown}{RGB}{167, 130, 46}
\definecolor{kit-brown100}{RGB}{167, 130, 46}
\definecolor{kit-brown90}{rgb}{0.6894, 0.5588, 0.2624}
\definecolor{kit-brown80}{rgb}{0.7239, 0.6078, 0.3443}
\definecolor{kit-brown75}{rgb}{0.7412, 0.6324, 0.3853}
\definecolor{kit-brown70}{rgb}{0.7584, 0.6569, 0.4263}
\definecolor{kit-brown60}{rgb}{0.7929, 0.7059, 0.5082}
\definecolor{kit-brown50}{rgb}{0.8275, 0.7549, 0.5902}
\definecolor{kit-brown40}{rgb}{0.862, 0.8039, 0.6722}
\definecolor{kit-brown30}{rgb}{0.8965, 0.8529, 0.7541}
\definecolor{kit-brown25}{rgb}{0.9137, 0.8775, 0.7951}
\definecolor{kit-brown20}{rgb}{0.931, 0.902, 0.8361}
\definecolor{kit-brown15}{rgb}{0.9482, 0.9265, 0.8771}
\definecolor{kit-brown10}{rgb}{0.9655, 0.951, 0.918}
\definecolor{kit-brown5}{rgb}{0.9827, 0.9755, 0.959}

\definecolor{kit-cyan}{RGB}{35, 161, 224}
\definecolor{kit-cyan100}{RGB}{35, 161, 224}
\definecolor{kit-cyan90}{rgb}{0.2235, 0.6682, 0.8906}
\definecolor{kit-cyan80}{rgb}{0.3098, 0.7051, 0.9027}
\definecolor{kit-cyan75}{rgb}{0.3529, 0.7235, 0.9088}
\definecolor{kit-cyan70}{rgb}{0.3961, 0.742, 0.9149}
\definecolor{kit-cyan60}{rgb}{0.4824, 0.7788, 0.9271}
\definecolor{kit-cyan50}{rgb}{0.5686, 0.8157, 0.9392}
\definecolor{kit-cyan40}{rgb}{0.6549, 0.8525, 0.9514}
\definecolor{kit-cyan30}{rgb}{0.7412, 0.8894, 0.9635}
\definecolor{kit-cyan25}{rgb}{0.7843, 0.9078, 0.9696}
\definecolor{kit-cyan20}{rgb}{0.8275, 0.9263, 0.9757}
\definecolor{kit-cyan15}{rgb}{0.8706, 0.9447, 0.9818}
\definecolor{kit-cyan10}{rgb}{0.9137, 0.9631, 0.9878}
\definecolor{kit-cyan5}{rgb}{0.9569, 0.9816, 0.9939}

\definecolor{kit-gray}{RGB}{0, 0, 0}
\definecolor{kit-gray100}{RGB}{0, 0, 0}
\definecolor{kit-gray90}{rgb}{0.1, 0.1, 0.1}
\definecolor{kit-gray80}{rgb}{0.2, 0.2, 0.2}
\definecolor{kit-gray75}{rgb}{0.25, 0.25, 0.25}
\definecolor{kit-gray70}{rgb}{0.3, 0.3, 0.3}
\definecolor{kit-gray60}{rgb}{0.4, 0.4, 0.4}
\definecolor{kit-gray50}{rgb}{0.5, 0.5, 0.5}
\definecolor{kit-gray40}{rgb}{0.6, 0.6, 0.6}
\definecolor{kit-gray30}{rgb}{0.7, 0.7, 0.7}
\definecolor{kit-gray25}{rgb}{0.75, 0.75, 0.75}
\definecolor{kit-gray20}{rgb}{0.8, 0.8, 0.8}
\definecolor{kit-gray15}{rgb}{0.85, 0.85, 0.85}
\definecolor{kit-gray10}{rgb}{0.9, 0.9, 0.9}
\definecolor{kit-gray5}{rgb}{0.95, 0.95, 0.95}

\newcommand{\alignedintertext}[1]{%
  \noalign{%
    \vskip\belowdisplayshortskip
    \vtop{\hsize=\linewidth#1\par
    \expandafter}%
    \expandafter\prevdepth\the\prevdepth
  }%
}

\title[A~posteriori error analysis for the \BDF{2} method for the LLG equation]{A~posteriori error analysis for the second-order BDF method for the Landau--Lifshitz--Gilbert equation}
\author[S. Karch]{Stefan Karch}
\address{Institute for Applied and Numerical Mathematics, Karlsruhe Institute of Technology, Englerstr.~2, 76131 Karlsruhe, Germany 
}
\email{stefan.karch@kit.edu}

\begin{document}

\begin{abstract}
    The tangent plane scheme (TPS) is a well-established discretization of the Landau--Lifshitz--Gilbert (LLG) equation.
    However, rigorous \emph{a posteriori} error estimates have not been established. 
    In this work, we derive a rigorous \emph{a posteriori} error estimate for the TPS based on the second-order backward differentiation formula (\BDF{2}) in time and finite elements of arbitrary polynomial degree in space. 
    The proposed estimators provide computable upper bounds for the temporal and spatial discretization errors on adaptive meshes with variable time-step sizes. 
    This result establishes the mathematical foundation for fully adaptive algorithms for the LLG equation.
\end{abstract}

\maketitle

\textbf{Keywords}\quad \def\sep{;\ }
\emph{a~posteriori} error analysis\sep BDF methods\sep energy technique\sep finite elements\sep Landau--Lifshitz--Gilbert equation\sep reconstruction.

\section{Introduction}

The Landau--Lifshitz--Gilbert (LLG) equation is the fundamental model for describing the evolution of the magnetization in ferromagnetic materials. 
Its solutions often exhibit highly localized structures, such as magnetic domain walls, together with rapid temporal changes caused by magnetization switching processes. 
Such localized phenomena arise in a variety of modern magnetic storage technologies, including domain-wall and skyrmion-based racetrack memories, as well as spin-orbit torque magnetic random-access memories
\cite{Schrefl:1999, FertET:2013, HirohataET:2020, FertET:2024}. 
The use of adaptive discretizations is well-suited to resolve these local structures in both space and time.

The analytical theory of the LLG equation is well established. 
Global weak solutions were proved in \cite{Visintin:1985, AlougesET:1992, GuoHong:1993}, although weak solutions are generally not unique \cite{AlougesET:1992}. 
For sufficiently regular initial data, local strong solutions are unique and become global in two space dimensions under suitable smallness assumptions \cite{CarbouFabrie:2001, CarbouFabrie:2001_2}. 
For sufficiently regular initial data, the existence and uniqueness of nontrivial strong solutions with arbitrary spatial and temporal regularity were established in \cite{FeischlTran:2017}.

Numerical discretizations of the LLG equation have been studied extensively. 
For an overview, we refer to the review articles \cite{KurzikProhl:2006, Cimrak:2008}.
Among the available approaches, the tangent plane scheme (TPS) \cite{AlougesJaisson:2006, Alouges:2008} has become one of the standard discretizations of the LLG equation.
While the original TPS formulation employs a nodal projection onto the unit sphere after every time step, several projection-free formulations have recently been proposed \cite{Bartels:2016, AbertET:2014, FeischlTran:2017_2, AFKL:2021, BartelsKovacsWang:2024, AkrivisBartelsPalus:2025, AkrivisET:2025, AldeET:2026}.
These methods avoid the explicit normalization step.
In particular, \cite{AFKL:2021} established optimal \emph{a priori} error estimates for backward differentiation formula (BDF) time discretizations combined with finite elements of arbitrary polynomial degree, together with discrete energy estimates.

The localized nature of LLG solutions has also motivated numerous adaptive numerical strategies, including adaptive mesh refinement, moving mesh methods, and adaptive time-stepping techniques \cite{Bagneres-ViallixET:1991, TakoET:1997, ScholzET:1999, HertelKronmuller:1998, GarciaRoma:2006, Goldenits:2012, Shepherd:2015, ExlET:2017, FangWang:2024, AkrivisET:2025}. 
Nevertheless, the mathematical theory of adaptive methods for the LLG equation remains rather limited. 
Existing \emph{a posteriori} error estimates are restricted to unconstrained or related formulations of the LLG equation \cite{Banas:2005, BanasSlodicka:2006, Banas:2008} and do not apply to the tangent plane formulation with the (weakly) enforced orthogonality constraint. 
To the best of our knowledge, no rigorous \emph{a posteriori} error analysis is available for the TPS.

Building upon the (higher-order) TPS of \cite{AFKL:2021}, we derive rigorous \emph{a posteriori} error estimates for a fully discrete approximation based on the second-order BDF method in time and finite elements of arbitrary polynomial degree in space. 
The proposed estimators provide computable upper bounds for both the spatial and temporal discretization errors on adaptive meshes with variable time-step sizes.
The analysis combines the elliptic reconstruction technique \cite{MakridakisNochetto:2003, LakkisMakridakis:2006} with the three-point time reconstruction \cite{AkrivisChatzipantelidis:2010, LozinskiET:2009}, an approach previously employed, for instance, for the time-dependent Stokes equations in \cite{BaenschBrenner:2019}.
A key advantage of the elliptic reconstruction framework is its natural separation of the spatial and temporal error contributions. 
This allows the reconstruction error to be controlled by established elliptic \emph{a posteriori} error estimators.
The resulting estimates constitute a rigorous \emph{a posteriori} error analysis for a TPS discretization of the LLG equation and establish the mathematical foundation for the design and analysis of fully adaptive algorithms.

We follow the analysis in \cite{AkrivisChatzipantelidis:2010, BaenschBrenner:2019} and first present the estimates for constant time-step sizes for the sake of simplicity, before considering variable time-step sizes.

The remainder of this paper is structured as follows. 
Section~\ref{SEC:NOTATION-PRELIM} first introduces the notation and recalls preliminary results before providing the continuous and fully discrete TPS. 
Subsequently, Section~\ref{SEC:APOST-EST} develops the space--time reconstruction and derives the residual-based \emph{a posteriori} error estimators, establishing the main result in Theorem~\ref{THM: main theorem}. 
Finally, the extension to variable time-step sizes is presented in Section~\ref{SEC:VARIABLE-TS}, yielding the corresponding result in Theorem~\ref{THM: apost variable tau}.

\section{Notation and preliminaries}\label{SEC:NOTATION-PRELIM}

In this section we first recall the continuous LLG equation before establishing the notation required for the subsequent numerical analysis.

\subsection{The Landau--Lifshitz--Gilbert equation}
The magnetic state of a ferromagnet is described by its magnetization $\bm$, represented by a three-dimensional vector field with constant length $|\bm| = 1$.
The classical model to describe the dynamics of the magnetization is given by the Landau--Lifshitz equation 
\begin{align}
    \label{EQ:strong-LL}
    \ptl_t \bm = - \frac{1}{1 + \alpha^2} \bm \times \bh_\eff(\bm) - \frac{\alpha}{1 + \alpha^2} \bm \times (\bm \times \bh_\eff(\bm)),
\end{align}
where $\bh_\eff$ describes the effective field. In this paper, we consider
\begin{align*}
    \bh_\eff(\bm) 
    &= \ell_\ex^2 \Delta \bm 
    + \bh_\ext,
\end{align*} 
where $\ell_\ex$ denotes the exchange length.
Taking the cross product of \eqref{EQ:strong-LL}, adding $\alpha$ times \eqref{EQ:strong-LL} and utilizing the vector identity $\ba \times (\bb \times \bc) = (\ba \cdot \bc) \bb - (\ba \cdot \bb) \bc$ with $|\bm| = 1$, we obtain the equivalent formulation 
\begin{equation}
\label{EQ:alt-LLG}
\begin{aligned}
    \alpha \ptl_t \bm + \bm \times \ptl_t \bm &= \bP(\bm) \bh_\eff(\bm),
    \\
    \ptl_t \bm \cdot \bm &= 0,
\end{aligned}
\end{equation}
where $\bP(\bm) = \bI - \bm \bm^\trp$ denotes the orthogonal projection onto the tangent plane to the unit sphere $\S^2$.
In this paper, we consider \eqref{EQ:alt-LLG} on a bounded domain $\Omega$ and a finite time interval $(0, T)$ with homogeneous Neumann boundary conditions and initial data $\bm^0$ with $|\bm^0| = 1$ in  $\Omega$.

\subsection{Notation}
Let $\Omega \subset \R^d$, $d \in \{2,3\}$, be a bounded domain with Lipschitz boundary.
For $1 \le p \le \infty$ and $k \in \N$, we write $\Lsym^p(\Omega)$ and $\Wsym^{k,p}(\Omega)$ for the usual Lebesgue and Sobolev spaces, respectively, and use the abbreviation $\Hsym^k(\Omega) \coloneqq \Wsym^{k,2}(\Omega)$.
The corresponding Bochner spaces are denoted analogously.
The associated norms are denoted by the corresponding subscripts, and the domain is omitted whenever it is clear from the context.
For convenience, we abbreviate the $\Lsym^2(\Omega)$-norm by $\norm{\cdot}\coloneqq\norm{\cdot}_{\Lsym^2(\Omega)}$.
Throughout, boldface symbols indicate vector-valued quantities.
We set $V \coloneqq \Hspace$, $\bV \coloneqq V^3$ and denote by $V'$ the dual of $V$.
We denote by $\dual{\cdot, \cdot}$ the duality pairing between $V'$ and $V$, which coincides with the $L^2$-inner product whenever both arguments belong to $L^2(\Omega)$.
Furthermore, we denote by $V_h^n \coloneqq \FEMspacep{p}(\TT_h^n) \subset V$ the conforming finite element space of polynomial degree $p \ge 1$ associated with the shape-regular triangulation $\TT_h^n$ of $\Omega$ at time $t_n$, $n \ge 0$.
Finally, we denote by $\Sigma_n$ the set of all edges (for $d=2$) or faces (for $d=3$) of $\TT_h^n$ and by $\Sigma_n^\circ$ the set of all interior edges or faces.

Let $0=t_0<t_1<\dots<t_N=T$ be a partition of $[0, T]$ with time-steps $\tau_n \coloneqq t_n - t_{n-1}$. 
Let $\bLproj{n}: \bL^2(\Omega)\to\bV_h^n$ denote the $\bL^2$-orthogonal projection onto the finite element space $\bV_h^n$. For a sequence
$\{\bm_h^n\}_{n=-1}^N$, we define the first-order backward difference by
\begin{align}
    \bar\partial \bm_h^n
    \coloneqq
    \bar\partial^1 \bm_h^n
    \coloneqq
    \frac{\bm_h^n-\bm_h^{n-1}}{\tau_n}
    =
    \frac{\bm_h^n-\bm_h^{n-1}}{t_n-t_{n-1}},
    \qquad n\ge0.
\end{align}
Higher-order backward differences are defined recursively by
\begin{align}
    \label{EQ: Def ptl bar k}
    \bar\partial^k \bm_h^n
    \coloneqq
    \bar\partial\big(\bar\partial^{k-1}\bm_h^n\big),
    \qquad
    2\le k\le n+1.
\end{align}
Since the finite element spaces may vary in time, the quantities
$\bar\partial^k\bm_h^n$ do not, in general, belong to $\bV_h^n$.
We therefore introduce the projected backward differences
\begin{align*}
    \bar\partial_n^k\bm_h^n
    \coloneqq
    \bLproj{n}\bar\partial^k\bm_h^n
    \in\bV_h^n,
    \qquad
    1\le k\le n+1.
\end{align*}

For $n\ge1$, we define the piecewise linear reconstruction
\begin{align}
    \label{EQ: lin. reconstruction}
    \bm_h(t)
    \coloneqq
    \bm_h^n
    +(t-t_n)\bar\partial\bm_h^n,
    \qquad
    t\in I_n\coloneqq(t_{n-1},t_n].
\end{align}
Moreover, for $n\ge2$, we define the quadratic three-point reconstruction
\begin{equation}
\label{EQ:def-Mh_uni}
\begin{aligned}
    \bM_h(t)
    &\coloneqq
    \bm_h(t)
    +\frac12(t-t_n)(t-t_{n-1})\bar\partial^2\bm_h^n,\\
    \partial_t\bM_h(t)
    &=
    \bar\partial^B\bm_h^n
    +(t-t_n)\bar\partial^2\bm_h^n,
\end{aligned}
\end{equation}
where
\begin{align}
    \label{EQ: Def ptlB}
    \bar\partial^B\bm_h^n
    \coloneqq
    \bar\partial\bm_h^n
    +\frac{\tau}{2}\bar\partial^2\bm_h^n,
    \qquad
    \bar\partial_n^B\bm_h^n
    \coloneqq
    \bLproj{n}\bar\partial^B\bm_h^n.
\end{align}
The operator $\bar\partial^B$ coincides with the BDF(2) time discretization operator.

Finally, on the initial interval $I_1=(t_0,t_1]$, we define
\begin{align}
    \label{EQ: initial tp-reconstruction}
    \bM_h(t)
    =
    \bm_h(t)
    +\frac12(t-t_0)(t-t_1)\bar\partial^2\bm_h^2,
    \qquad
    \partial_t\bM_h(t)
    =
    \bar\partial\bm_h^1
    +(t-t_{1/2})\bar\partial^2\bm_h^2,
\end{align}
where $t_{1/2}\coloneqq (t_0+t_1)/2$. By a slight abuse of notation, the symbol
$\bM_h$ is also used for this initial reconstruction, but its meaning is clear from the context.

\subsection{Preliminaries}

We define the bilinear form $a: \bV \times \bV \to \R$ by
\begin{align}
    \label{EQ: bilinform a}
    a(\bm, \bphi) = \dual{\grad \bm, \grad \bphi}.
\end{align}
We first introduce the discrete Laplacian for functions in $\bHk{1}(\Omega)$.
An analogous definition for functions in $\bH_0^1(\Omega)$ is given in \cite[Def.~1.1]{BaenschKarakatsaniMakridakis:2013}.

\begin{Definition}[Discrete Laplacian]
    \label{DEF: Discrete laplacoan}
    Let $\Omega \subset \R^d$, $d=2,3$, be a bounded domain with Lipschitz boundary and $\bv \in \bHspace$.
    Then the discrete Laplacian $- \Delta_h^n \bv \in \bV_h^n \subset \bHspace$ is the operator with the property 
    \begin{align}
        \label{EQ: discrete Laplacian def}
        \dual{- \Delta_h^n \bv, \bphi_h^n} = a(\bv, \bphi_h^n) - \dual{\traceopN \bv, \traceopD \bphi_h^n} 
        , \qq \forall \bphi_h^n \in \bV_h^n,
    \end{align}
    where $\traceopD$ and $\traceopN$ denote the Dirichlet and Neumann trace operators,~cf.~\cite[Theorem~3.37]{McLean:2000} and \cite[Lemma~4.3]{McLean:2000}, respectively. 
\end{Definition}
We proceed with the definition of the elliptic reconstruction associated with the bilinear form $a$ \eqref{EQ: bilinform a} and the finite element space $\bV_h^n$ \cite{LakkisMakridakis:2006, MakridakisNochetto:2003}.

\begin{Definition}[Elliptic reconstruction]
    \label{DEF: Elliptic reconstruction}
    We define for $\bm_h^n \in \bV_h^n \subset \bHspace$ the elliptic reconstruction \hbox{$\bRR \bm_h^n \in \bHspace$} by
    \begin{align}
        \label{EQ: elliptic reconstruction}
        a({\bRR \bm_h^n, \bphi}) = \dual{ - \Delta_h^n \bm_h^n, \bphi}, \qq \forall \bphi \in \bHspace.
    \end{align}
\end{Definition}

A main characteristic of the elliptic reconstruction is that we can use any available a posteriori estimate for the elliptic part of the equation \cite{LakkisMakridakis:2006}. 

\begin{Assumption}
    Let $\bm_h^n \in \bV_h^n$ and $\bRR \bm_h^n \in \bV$ be the elliptic reconstruction defined in \eqref{EQ: elliptic reconstruction}. Then, we assume that there exist a posteriori error estimators $\eta(\bm_h^n; \Lsym^2)$ and $\eta(\bm_h^n; \Hsym^1)$, such that
    \begin{align}
        \label{EQ: ell. rec. a post}
        \norm{(\bI - \bRR) \bm_h^n}^2 \lesssim \eta(\bm_h^n; \Lsym^2),
        \qq
        \norm{(\bI - \bRR) \bm_h^n}_{\bHk{1}}^2 \lesssim \eta(\bm_h^n; \Hsym^1).
    \end{align}
\end{Assumption}

A possible choice for the estimators is given by
\begin{equation}
\label{EQ: eta estimator standard}
\begin{aligned}
    \eta(\bm_h^n;  \Lsym^2) \coloneqq \sum\limits_{K \in \TT_h^n} h_K^4 \norm{R_K(\bm_h^n)}_{\bLsym^2(K)}^2 + \sum\limits_{E \in \FEMedges} h_E^3 \norm{R_E(\bm_h^n)}_{\bLsym^2(E)}^2,
    \\
    \eta(\bm_h^n; \Hsym^1) \coloneqq \sum\limits_{K \in \TT_h^n} h_K^2 \norm{R_K(\bm_h^n)}_{\bLsym^2(K)}^2 + \sum\limits_{E \in \FEMedges} h_E \norm{R_E(\bm_h^n)}_{\bLsym^2(E)}^2,
\end{aligned}
\end{equation}
where the element and edge residuals are defined by 
\begin{align*}
    R_K(\bm_h^n) \coloneqq (\Delta - \Delta_h^n)\bm_h^n
\end{align*}
for $K \in \TT_h^n$ and 
\begin{align*}
    R_E(\bm_h^n) \coloneqq 
    \begin{cases}
    \jump{\ptl_{\bn} \bm_h^n}, & \text{for } E \in \FEMinneredges, \\
    \ptl_{\bn} \bm_h^n, & \text{for } E \in \FEMedges \cap \ptl \Omega.
    \end{cases}
\end{align*}
For details, we refer the interested reader to \cite[Theorem~3.1]{AinsworthOden:1997}, \cite[Prop.~3.8]{Verfuerth:1996} and \cite{Verfuerth:2013}.

We also require a~posteriori bounds in the $\bLsym^\infty$-norm for the elliptic reconstruction.
Again, since the elliptic reconstruction satisfies an associated elliptic problem, any existing a~posteriori $\bLsym^\infty$-estimate transfers directly to the reconstruction.
For instance, Nochetto~et~al. derived a posteriori error estimates in the $\Linf$-norm for monotone semi‐linear elliptic problems on a bounded, polyhedral domain in \cite[Theorem~4.2]{NochettoET:2006}.
Similarly, in \cite[Theorem~3.1]{Demlow:2006}, an a posteriori bound in the $\Linf$-norm of the gradient of the error of piecewise linear finite element approximations is established on convex polyhedral domains for quasilinear elliptic problems.

\begin{Assumption}
    \label{AS: Linfty estimators}
    Let $\bm_h^n \in \bV_h^n$ and $\bRR \bm_h^n \in \bWspacekp{1}{\infty}$ be the elliptic reconstruction defined in \eqref{EQ: elliptic reconstruction}. Then, we assume that there exist a posteriori error estimators $\etainfty{\bm_h^n}$ and $\etaWinfty{\bm_h^n}$, such that
    \begin{align}
        \label{EQ: apost ell rec Linfty}
        \norm{\bRR \bm_h^n - \bm_h^n}_{\bLinf}^2 &\lesssim \etainfty{\bm_h^n},
        \qq 
        \norm{\grad(\bRR \bm_h^n - \bm_h^n)}_{\bLinf}^2 \lesssim \etaWinfty{\bm_h^n}.
    \end{align}
\end{Assumption}

Note that we may have to impose additional assumptions on the domain $\Omega$ to ensure the validity of the a posteriori estimates \eqref{EQ: ell. rec. a post} and \eqref{EQ: apost ell rec Linfty},~e.g.,~smoothness of the boundary or convexity.
We emphasize that, for the derivation of the optimal-order a~posteriori error estimators later in Theorem~\ref{THM: main theorem}, no convergence properties of the estimators $\etainfty{\bm_h^n}$ and $\etaWinfty{\bm_h^n}$ are required.
Instead, it suffices to assume the uniform bound $\etaWinfty{\bm_h^n} \le C$, where $C>0$ is independent of the discretization parameters.

Further, we require a Lipschitz-type bound for the projection onto the tangent plane. 
The following Lemma is an extension of \cite[Lemma~4.1]{AFKL:2021} to the case $|\bm| \neq 1$.

\begin{Lemma}[Lipschitz-type bound]
    \label{LEM: Lipschitz bound}
    Let $\tilde \bm, \bv \in \bHspace \cap \bWspacekp{1}{\infty}$ and $\bm \in \bHspace \cap \bLinfspace$.
    Then, the projection
    $\bP(\bm) = \bI - \bm \bm^\trp$ satisfies
    \begin{align*}
        \norm{(\bP(\tilde \bm) - \bP(\bm))\bv}
        &\le
        \big(\norm{\bm}_{\bLinf} + \norm{\tilde \bm}_{\bLinf} \big) \norm{\bv}_{\bLinf} \norm{\tilde \bm - \bm},
        \\
        \norm{\grad (\bP(\tilde \bm) - \bP(\bm))\bv}
        &\le 
        \big( 2 \norm{\bv}_{\bLinf} \norm{\grad \tilde \bm}_{\bLinf} + \norm{\grad \bv}_{\bLinf} (\norm{\bm}_{\bLinf} + \norm{\tilde \bm}_{\bLinf}) \big)\norm{\tilde \bm - \bm}        
        \\&\quad
        + 2 \norm{\bv}_{\bLinf} \big(\norm{\bm}_{\bLinf} + 2 \norm{\tilde \bm}_{\bLinf} \big) \norm{\grad(\tilde \bm - \bm)}
        \\&\le \gamma(\bm, \tilde \bm, \bv) \norm{\tilde \bm - \bm}_{\bHk{1}},
    \end{align*}
    where $\gamma$ is defined by
    \begin{align*}
        \gamma(\bm, \tilde \bm, \bv) \coloneqq 
         2 \norm{\bv}_{\bLinf} (\norm{\bm}_{\bLinf} + 2 \norm{\tilde \bm}_{\bLinf} + \norm{\grad \tilde \bm}_{\bLinf}) + \norm{\grad \bv}_{\bLinf} (\norm{\bm}_{\bLinf} + \norm{\tilde \bm}_{\bLinf}).
    \end{align*}
    If $\norm{\bm}_{\bLinf} = 1$, then $\gamma(\bm, \tilde \bm, \bv) = \gamma(\tilde \bm, \bv)$.
\end{Lemma}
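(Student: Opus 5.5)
The plan is to expand the difference of the projections algebraically and estimate each resulting term with Hölder's inequality, putting $\bLinf$ on every factor except the one carrying $\tilde\bm-\bm$. Since $\bP(\bm)=\bI-\bm\bm^\trp$, we have
\begin{align*}
    (\bP(\tilde\bm)-\bP(\bm))\bv = \bm(\bm\cdot\bv) - \tilde\bm(\tilde\bm\cdot\bv)
    = (\bm-\tilde\bm)(\bm\cdot\bv) + \tilde\bm\big((\bm-\tilde\bm)\cdot\bv\big).
\end{align*}
No normalization $|\bm|=1$ is used here, which is the only difference from \cite[Lemma~4.1]{AFKL:2021}. Taking the $\bL^2$-norm and estimating pointwise gives $|\bm-\tilde\bm|\,(|\bm|+|\tilde\bm|)\,|\bv|$. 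This immediately yields the first bound with the factor $(\norm{\bm}_{\bLinf}+\norm{\tilde\bm}_{\bLinf})\norm{\bv}_{\bLinf}$.

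For the gradient bound, the same splitting should be arranged so that $\grad\bm$ never appears with an $\bLinf$ norm, since $\bm$ is only in $\bHspace\cap\bLinfspace$. I will differentiate the first term $(\bm-\tilde\bm)(\bm\cdot\bv)$ by the product rule, writing $\grad\bm=\grad(\bm-\tilde\bm)+\grad\tilde\bm$ wherever it occurs. This produces three kinds of terms.
\begin{enumerate}
    \item Terms with $\grad(\bm-\tilde\bm)$, multiplied by $\bLinf$-bounded factors built from $\bm,\tilde\bm,\bv$.
    \item Terms with $(\bm-\tilde\bm)$ times $\grad\tilde\bm$ or $\grad\bv$, which are bounded in $\bLinf$.
    \item Terms of the type $(\bm-\tilde\bm)\otimes\big(\grad(\bm-\tilde\bm)^\trp\bv\big)$, which come from $\grad\bm$ hitting the inner product $\bm\cdot\bv$.
\end{enumerate}
The third kind is the delicate one. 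Rewriting $\bm$ inside the inner product as $\tilde\bm+(\bm-\tilde\bm)$, we absorb the first factor $|\bm-\tilde\bm|\le\norm{\bm}_{\bLinf}+\norm{\tilde\bm}_{\bLinf}$ in $\bLinf$. The remaining factor $\grad(\bm-\tilde\bm)$ is then kept in $\bL^2$. This is why the coefficients in front of $\norm{\grad(\tilde\bm-\bm)}$ carry the $\norm{\bm}_{\bLinf}+2\norm{\tilde\bm}_{\bLinf}$ combination.

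The second term, $\tilde\bm\big((\bm-\tilde\bm)\cdot\bv\big)$, is differentiated similarly and gives three contributions:
\begin{itemize}
    \item $\grad\tilde\bm\,(\ldots)$, bounded by $\norm{\grad\tilde\bm}_{\bLinf}\norm{\bv}_{\bLinf}\norm{\tilde\bm-\bm}$;
    \item $\tilde\bm\otimes\grad(\bm-\tilde\bm)^\trp\bv$, bounded by $\norm{\tilde\bm}_{\bLinf}\norm{\bv}_{\bLinf}\norm{\grad(\tilde\bm-\bm)}$;
    \item $\tilde\bm\otimes\grad\bv^\trp(\bm-\tilde\bm)$, bounded by $\norm{\tilde\bm}_{\bLinf}\norm{\grad\bv}_{\bLinf}\norm{\tilde\bm-\bm}$.
\end{itemize}
Collecting all terms, bounding by the triangle inequality, and grouping the $\norm{\tilde\bm-\bm}$ and $\norm{\grad(\tilde\bm-\bm)}$ contributions should reproduce the stated constants, possibly after crude enlargements such as $1\le2$.

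For the final inequality, I bound $\norm{\tilde\bm-\bm}$ and $\norm{\grad(\tilde\bm-\bm)}$ each by $\norm{\tilde\bm-\bm}_{\bHk1}$. I then sum the two coefficients, using the fact that the first coefficient's $\norm{\grad\tilde\bm}_{\bLinf}$ part and the second's $\norm{\bm}_{\bLinf}+2\norm{\tilde\bm}_{\bLinf}$ part both carry the common factor $2\norm{\bv}_{\bLinf}$; this yields $\gamma$. The last remark, that $\gamma(\bm,\tilde\bm,\bv)=\gamma(\tilde\bm,\bv)$ when $\norm{\bm}_{\bLinf}=1$, is immediate by substitution.

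The main obstacle I expect is the bookkeeping of the gradient terms. The products must be split so that no $\bLinf$ norm of $\grad\bm$ appears, and the constants must match exactly, in particular the factor $2\norm{\tilde\bm}_{\bLinf}$. If the exact constants do not come out, I would accept any bound of the same structure with at most these coefficients, enlarging where needed.
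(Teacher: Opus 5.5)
Your proposal is correct and is essentially the paper's argument: with $\be=\bm-\tilde\bm$ you split $\bm\bm^\trp-\tilde\bm\tilde\bm^\trp$ as $\be\bm^\trp+\tilde\bm\be^\trp$, while the paper uses the transposed splitting $\bm\be^\trp+\be\tilde\bm^\trp$. Both then differentiate by the product rule with $\grad\bm=\grad\be+\grad\tilde\bm$ and apply H\"older's inequality together with $\norm{\be}_{\bLinf}\le\norm{\bm}_{\bLinf}+\norm{\tilde\bm}_{\bLinf}$.

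One bookkeeping remark: substituting $\bm=\be+\tilde\bm$ only where $\bm$ is differentiated, as you describe, gives the smaller coefficient $2\norm{\bv}_{\bLinf}(\norm{\bm}_{\bLinf}+\norm{\tilde\bm}_{\bLinf})$ in front of $\norm{\grad(\tilde\bm-\bm)}$. The paper's $\norm{\bm}_{\bLinf}+2\norm{\tilde\bm}_{\bLinf}$ comes from also expanding the undifferentiated factor $\bm$, so you reach the stated constant through the enlargement you anticipated.
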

\begin{proof}
    Setting $\be = \bm - \tilde \bm$, we rewrite the difference of the projections as
    \begin{align*}
        (\bP(\tilde \bm) - \bP(\bm))\bv
        = (\bm \be^\trp + \be \tilde \bm^\trp) \bv.
    \end{align*}
    Differentiating componentwise and substituting $\bm = \be + \tilde \bm$ yields
    \begin{align*}
        \ptl_i (\bP(\tilde \bm) - \bP(\bm))\bv
        &= (\ptl_i \be \be^\trp + \be \ptl_i \be^\trp + \ptl_i \tilde \bm \be^\trp + \tilde \bm \ptl_i \be^\trp + \ptl_i \be \tilde \bm^\trp + \be \ptl_i \tilde \bm^\trp)\bv 
        \\&\quad + (\bm \be^\trp + \be \tilde \bm^\trp) \ptl_i \bv.
    \end{align*}
    Estimating the terms using H\"older's inequality along with the bound $\norm{\be}_{\bLinf} \le \norm{\bm}_{\bLinf} + \norm{\tilde \bm}_{\bLinf}$ completes the proof.
\end{proof}

\subsection{Weak formulation}

For the sake of simplicity, we substitute the external field $\bh_\ext$ with $\bff$. 
Moreover, since the notation is already quite involved, we set $\ell_\ex^2 = 1$. 
However, the proofs can be generalized directly to arbitrary $\ell_\ex^2 > 0$.

For $\bm \in \bV$, we define the tangent space
\begin{align*}
    \bTsym(\bm) = \{ \bphi \in \bV \,:\, \bm \cdot \bphi = 0~\textrm{a.e.}\} = \{ \bphi \in \bV\,:\, \bP(\bm) \bphi = \bphi\}.
\end{align*}
Following the tangent plane approach of \cite{AlougesJaisson:2006, Alouges:2008}, we consider the weak formulation:
Find $\bm \in \Lsym^2(0, T; \bV)$ with $\ptl_t \bm \in \Lsym^2(0, T; \bTsym(\bm))$ such that
\begin{align}
	\label{EQ: linearized LLG analytical}
	{\alpha} \dual{\ptl_t \bm, \bphi} + \dual{\bm \times \ptl_t \bm, \bphi} + a(\bm, \bphi) = \dual{\bff, \bphi}, \qq \forall \bphi \in \Lsym^2(0, T; \bTsym(\bm)).
\end{align}
Introducing a Lagrange multiplier to enforce the normalization constraint, the above formulation is equivalently written as the following saddle point problem:
Find $(\bm, \lambda) \in \Hsym^1(0, T; \bV) \times \Lsym^2(0, T; V')$ 
\begin{equation}
	\label{EQ: linearized LLG analytical SPP}
	\begin{aligned}
		{\alpha} \dual{\ptl_t \bm, \bphi} + \dual{\bm \times \ptl_t \bm, \bphi} + a(\bm, \bphi) + b^{\bm}(\lambda, \bphi) &= \dual{\bff, \bphi}, 
		\\
		b^{\bm}(\psi, \ptl_t \bm) &= 0
	\end{aligned}
\end{equation}
for all $(\bphi, \psi) \in \Lsym^2(0, T;\bV) \times \Lsym^2(0, T; V')$, where $b^{\bm}$ is defined by
\begin{align}
    \label{EQ: Def b}
    b^{\bm}(\lambda, \bphi) \coloneqq \dual{\lambda, \bm \cdot \bphi}.
\end{align}

The well-posedness of the saddle point formulation \eqref{EQ: linearized LLG analytical SPP} is given by the following inf-sup condition.

\begin{Lemma}[Inf-sup condition]\label{LEM:inf-sup}
	Let $\bm \in \bWspacekp{1}{\infty}$ with $|\bm| = 1$ almost everywhere. Then, 
	for $b^{\bm} : V' \times \bV \to \R$ defined in \eqref{EQ: Def b}, 
	the inf-sup condition
	\begin{align*}
		\beta(\bm) \norm{\lambda}_{V'}
		\le 
		\sup_{\bphi \in \bV \setminus \{\b0\}}
		\frac{b^{\bm}(\lambda, \bphi)}{\norm{\bphi}_{\bV}}
		=
		\sup_{\bphi \in \bV \setminus \{\b0\}}
		\frac{\dual{\lambda, \bm\cdot\bphi}}{\norm{\bphi}_{\bV}}
	\end{align*}
	holds, where $\beta$ is defined by
	\begin{align*}
		\beta(\bm) \coloneqq \frac{1}{\sqrt{3}\max\big\{1, \norm{\grad\bm}_{\bLsym^\infty}\big\}}.
	\end{align*}
\end{Lemma}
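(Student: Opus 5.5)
Given $\lambda \in V'$, we aim to build an explicit test function $\bphi\in\bV$ that converts $b^{\bm}(\lambda,\bphi)$ into the dual pairing $\dual{\lambda,v}$ for an arbitrary scalar $v\in V$. The natural choice is $\bphi \coloneqq v\,\bm$. Since $|\bm|=1$ almost everywhere, we have $\bm\cdot\bphi = v|\bm|^2 = v$, and therefore
\begin{align*}
    b^{\bm}(\lambda, v\bm) = \dual{\lambda, v}.
\end{align*}
It then remains to bound $\norm{v\bm}_{\bV}$ from above by $\sqrt{3}\max\{1,\norm{\grad\bm}_{\bLsym^\infty}\}\norm{v}_{V}$. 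Once this is available, we get
\begin{align*}
    \sup_{\bphi\neq\b0}\frac{b^{\bm}(\lambda,\bphi)}{\norm{\bphi}_{\bV}}
    \ge \frac{\dual{\lambda,v}}{\norm{v\bm}_{\bV}}
    \ge \beta(\bm)\frac{\dual{\lambda,v}}{\norm{v}_V},
\end{align*}
for every $v$ with $\dual{\lambda,v}\ge 0$; the case $\dual{\lambda,v}<0$ is handled by replacing $v$ with $-v$. Taking the supremum over $v\in V\setminus\{0\}$ yields $\beta(\bm)\norm{\lambda}_{V'}$, which is the claim.

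For the norm bound, we first check that $v\bm\in\bV$. This follows because $\bm\in\bWspacekp{1}{\infty}$ and $v\in H^1$, so the product rule gives $\partial_i(v m_j) = m_j\partial_i v + v\,\partial_i m_j \in L^2$. For the $L^2$ part, $|v\bm| = |v|$ pointwise, so $\norm{v\bm}=\norm{v}$. For the gradient we use the pointwise estimate
\begin{align*}
    |\grad(v\bm)| \le |\bm|\,|\grad v| + |v|\,|\grad\bm| ,
\end{align*}
which implies $\norm{\grad(v\bm)} \le \norm{\grad v} + \norm{\grad\bm}_{\bLsym^\infty}\norm{v}$. Writing $M \coloneqq \max\{1,\norm{\grad\bm}_{\bLsym^\infty}\}$ and applying $(a+b)^2\le 2a^2+2b^2$, we obtain $\norm{v\bm}_{\bV}^2 \le \norm{v}^2 + 2\norm{\grad v}^2 + 2M^2\norm{v}^2 \le 3M^2\norm{v}_V^2$. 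This gives exactly the constant $\sqrt3 M$, matching the definition of $\beta(\bm)$.

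The only delicate point is this constant bookkeeping. The factor $\sqrt{3}$ depends on the chosen form of the $H^1$ norm; the paper may instead use a componentwise estimate, which would account for the dimension-like $3$. Either way, a crude estimate that keeps the constant at most $\sqrt3 M$ suffices. We should also note that the set of tests $v\bm$ is a subset of $\bV$, so restricting the supremum to it can only decrease the supremum. This is why the inequality goes in the required direction.
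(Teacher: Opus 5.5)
Your proposal is correct and follows essentially the same route as the paper's proof. The paper also tests with $\bphi = \zeta\bm$, uses $\bm\cdot\zeta\bm = \zeta$, and bounds $\norm{\zeta\bm}_{\bV}^2 \le 3\max\{1,\norm{\grad\bm}_{\bLsym^\infty}^2\}\norm{\zeta}_V^2$ through the same product-rule splitting with $(a+b)^2\le 2a^2+2b^2$; your explicit sign flip $v\mapsto -v$ for $\dual{\lambda,v}<0$ is a point the paper leaves implicit.
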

\begin{proof}
	Using $\bm \cdot \bm =1$ a.e. leads to
	\begin{align*}
		\norm{\lambda}_{V'}
		&= \sup_{\zeta\in V\setminus\{0\}}
		\frac{\dual{\lambda,\zeta}}{\norm{\zeta}_{V}}
		= \sup_{\zeta\in V\setminus\{0\}}
		\frac{\dual{\lambda,\bm\cdot\zeta\bm}}%
		{\norm{\zeta}_{V}}.
	\end{align*}
	Since $\bm \in \bWspacekp{1}{\infty}$ and $|\bm| = 1$ a.e., we estimate with Young's inequality
	\begin{equation}
		\label{EQ: inf-sup beta deriv}
		\begin{aligned}
			\norm{\zeta \bm}_{\bV}^{2} 
			&= \int_\Omega \big( |\zeta \bm|^2 + | \grad (\zeta \bm) |^2 \big) \de \bx
			\\&\le
			\int_\Omega \big( |\zeta|^2 |\bm|^2 + 2 |\zeta|^2|\grad\bm|^2
			+ 2|\grad\zeta|^2|\bm|^2\big) \de \bx
			\\&\le 
			3\max\big\{1, \norm{\grad\bm}_{\bLsym^\infty}^{2}\big\}\norm{\zeta}_{V}^{2}
			= \frac1{\beta(\bm)^2}\norm{\zeta}_{V}^{2}.
		\end{aligned}
	\end{equation}
	Finally, applying \eqref{EQ: inf-sup beta deriv} we conclude
	\begin{align*}
		\norm{\lambda}_{V'}
		= 
		\sup_{\zeta\in V\setminus\{0\}} \frac{\dual{\lambda,\bm\cdot\zeta\bm}}{\norm{\zeta}_{V}}
		\le 
		\frac1{\beta(\bm)} \sup_{\zeta\in V\setminus\{0\}} \frac{\dual{\lambda,\bm\cdot\zeta\bm}}{\norm{\zeta\bm}_{V}{}}
		\le \frac1{\beta(\bm)}\sup_{\bphi\in\bV\setminus\{\b0\}} \frac{\dual{\lambda,\bm\cdot\bphi}}{\norm{\bphi}_{\bV}}.
	\end{align*}
\end{proof}

\begin{Remark}
    The regularity assumption $\bm\in\bWspacekp{1}{\infty}$ in Lemma~\ref{LEM:inf-sup} is only imposed for simplicity. 
    The proof merely requires that multiplication by $\bm$ defines a bounded operator from $V$ to $\bV$. 
    By utilizing Hölder's inequality alongside the Sobolev embedding $\Hsym^1(\Omega) \hookrightarrow \Lsym^{2p/(p-2)}(\Omega)$, the gradient bound remains valid under the weaker assumption $\bm\in \bWspacekp{1}{p}$ for some $p>d$, where $d$ denotes the spatial dimension.
    In this relaxed case, the inf-sup constant $\beta(\bm)$ depends continuously on the $\bLsym^p$-norm of the gradient $\|\grad\bm\|_{\bLsym^p(\Omega)}$.
\end{Remark}

\subsection{Model equation and full discretization}
We consider the discrete tangent space in a weak $\Lsym^2$-sense as in \cite{AFKL:2021}.
Namely, for a function $\bm \in \bHspace$, we define the discrete tangent space as
\begin{align}
    \label{EQ: discrete tps bThn apost}
    \bTsym_h^n(\bm) = \big\{ \bphi_h \in \bV_h^n \,:\, \dual{\bm \cdot \bphi_h, \psi_h} = 0, \quad \forall \psi_h \in \spacelam{n} \big\}.
\end{align}
Note that $\bTsym_h^n(\bm) \not \subset \bTsym(\bm)$ in general.
The discrete tangent space we use differs from that used in the works \cite{AlougesJaisson:2006, Alouges:2008}, where the  normalization constraint is satisfied at each node of the finite element mesh.
Then, the fully discrete scheme employing the \BDF{2} scheme reads as follows: Find $\bm_h^n$ with $\bv_h^n \coloneqq \bar \ptl^B \bm_h^n \in \bTsym_h^n(\hat \bm_h^n)$ such that
\begin{align}
    \label{EQ: discrete LLG}
    {\alpha} \dual{\bv_h^n, \bphi_h^n} + \dual{\hat \bm_h^n \times \bv_h^n, \bphi_h^n} + a(\bm_h^n, \bphi_h^n) = \dual{\bff_h^n, \bphi_h^n}, \qq \forall \bphi_h^n \in \bTsym_h^n(\hat \bm_h^n),
\end{align}
where $\bff_h^n \coloneqq \bLproj{n} \bff(t_n)$ and $\hat \bm_h^n$ is a predictor for $\bm_h^n$. 
Note that our analysis does not rely on any specific structure of $\hat \bm_h^n$ 
and only assumes that $\hat \bm_h^n \in \bV$, i.e., $\hat \bm_h^n \not \in \bV_h^n$ is admissible.
In general, we first compute an extrapolation $\bq_h^n$ for $\bm_h^n$ from a composition of previously computed values $\bm_h^{n-1}, \dots, \bm_h^{n-j}$, $j \ge 1$. 
Preferably, we then normalize the approximation $\bq_h^n$ to employ the predictor $\hat \bm_h^n = \bq_h^n / |\bq_h^n|$ as in \cite[(2.1)]{AFKL:2021}.
However, this normalization leads to $\hat \bm_h^n \not \in \bV_h^n$.
In practice, we may only normalize the predictor at nodal values and thus ensure that the predictor remains in $\bV_h^n$.

Equivalently, we formulate the corresponding saddle point problem by seeking $(\bm_h^n, \lambda_h^n) \in \bV_h^n \times \spacelam{n} \subset \bV \times V'$ such that
\begin{equation}
    \label{EQ: discrete LLG SPP}
    \begin{aligned}
        {\alpha} \dual{\bv_h^n, \bphi_h^n} + \dual{\hat \bm_h^n \times \bv_h^n, \bphi_h^n} + a(\bm_h^n, \bphi_h^n) + b^{\hat \bm_h^n}(\lambda_h^n, \bphi_h^n) &= \dual{\bff_h^n, \bphi_h^n}, 
        \\
        b^{\hat \bm_h^n}(\psi_h^n, \bv_h^n) &= 0
    \end{aligned} 
\end{equation}
for all $(\bphi_h^n, \psi_h^n) \in \bV_h^n \times \spacelam{n}$, where $b^{\bm}$ is defined in \eqref{EQ: Def b}.
The well-posedness of the fully discrete scheme \eqref{EQ: discrete LLG SPP} is ensured by the discrete inf-sup conditions as provided in \cite[p.~1013]{AFKL:2021}.

For the initialization of the numerical scheme, we require the given initial condition $\bm^0 \in \bHspace$ with $|\bm^0| = 1$ a.e. in $\Omega$. 
Then, we set $\bm_h^0 \coloneqq \bLproj{0} \bm^0$.
Further, we require $\lambda_h^0$ to compute the first time step later using the trapezoidal scheme \eqref{EQ: init trapezoidal m1}.
We determine $(\bv_h^0, \lambda_h^0) \in \bV_h^0 \times \spacelam{0}$ by computing
\begin{equation}
    \label{EQ: init v0 lambda0}
    \begin{aligned}
        \alpha \dual{\bv_h^0, \bphi_h^0} + \dual{\bm_h^0 \times \bv_h^0, \bphi_h^0} + b^{\bm_h^0}(\lambda_h^0, \bphi_h^0) &= \dual{\bff_h^0, \bphi_h^0} - a(\bm_h^0, \bphi_h^0), 
        \\
        b^{\bm_h^0}(\psi_h^0, \bv_h^0) &= 0,
    \end{aligned}
\end{equation}
for all $(\bphi_h^0, \psi_h^0) \in \bV_h^0 \times \spacelam{0}$.
Setting $\hat \bm_h^0 = \bm_h^0$, we treat the first time step using the trapezoidal rule by finding $(\bm_h^1, \lambda_h^1) \in \bV_h^1 \times \spacelam{1}$ such that
\begin{equation}
    \label{EQ: init trapezoidal m1}
    \begin{aligned}
        {\alpha} \dual{\bar \ptl \bm_h^1, \bphi_h^1} + \dual{\hat \bm_h^{1/2} \times \bar \ptl \bm_h^1, \bphi_h^1} + a(\bm_h^{1/2}, \bphi_h^1) + b^{\hat \bm_h^{1/2}}(\lambda_h^{1/2}, \bphi_h^1) &= \dual{\bff_h^{1/2}, \bphi_h^1}, 
        \\
        b^{\hat \bm_h^{1/2}}(\psi_h^1, \bar \ptl \bm_h^1) &= 0,
    \end{aligned}
\end{equation}
for all $(\bphi_h^1, \psi_h^1) \in \bV_h^1 \times \spacelam{1}$, 
where the superscript $1/2$ denotes the average of the values at $t_0$ and $t_1$, e.g., $\bff_h^{1/2} = \frac12 (\bff_h^1 + \bff_h^0)$.
Possible choices for the predictor $\hat \bm_h^1$ are the normalized first-order approximation $\hat \bm_h^1 = \hat \bm_h^0 / |\hat \bm_h^0|$ or the normalized second-order approximation $\hat \bm_h^1 = (\hat \bm_h^0 + \tau \bv_h^0) / |\hat \bm_h^0 + \tau \bv_h^0|$.

\section{\emph{A~posteriori} estimate for the fully discrete LLG equation}\label{SEC:APOST-EST}

Before presenting the main result of this chapter, we first introduce the a~posteriori error estimators that we obtain later in our analysis. 

\begin{Definition}[A~posteriori error estimators]
\label{DEF: error est.}
Let $\seq{\bm_h^n}{0}{N}$, $\seq{\hat \bm_h^n}{0}{N}$, $\seq{\bff_h^n}{0}{N}$, $\seq{\lambda_h^n}{0}{N}$ be sequences with $\bm_h^n, \bff_h^n \in \bV_h^n$, $\lambda_h^n \in \spacelam{n}$ and $\hat \bm_h^n \in V$ for all $n \ge 0$
and define for convenience,~cf.~Lemma~\ref{LEM: apost rh and rh0},
\begin{align}
    \label{EQ: mh -1}
    {\alpha} \bm_h^{-1} &\coloneqq {\alpha} \bm_h^1 - 2 \tau \big( \bff_h^0 + \Delta_h^0 \bm_h^0 - \bLproj{1}(\hat \bm_h^0 \times \bar \ptl \bm_h^1) - \bLproj{1} (\lambda_h^0 \hat \bm_h^0) \big).
\end{align}
Then, we introduce the \textbf{time error estimators}
\begin{equation}
    \begin{aligned}
        &\EE_1 \coloneqq \max\limits_{2 \le n \le N} \norm{\grad \bar \ptl^2_n \bm_h^n}^2, 
        &&\EE_2 \coloneqq \sum\limits_{n=2}^N \frac{1}{\tau} \norm{\bar \ptl_n^2 \bm_h^n - \bar \ptl_{n-1}^2 \bm_h^{n-1}}^2,
        \\
        &\EE_3 \coloneqq \sum\limits_{n=1}^N \tau \big(\norm{\bar \ptl \bm_h^n \bar \ptl \lambda_h^n}^2 + \norm{\bar \ptl \hat \bm_h^n \bar \ptl \lambda_h^n}^2 \big), 
        &&\EE_6 \coloneqq \sum\limits_{n=2}^N \tau \norm{\bar \ptl^2 \bm_h^n}^2,
        \\
        \mathclap{\hspace{15cm}\EE_4\coloneqq \sum\limits_{n = 2}^N \tau \Big(\norm{\bar \ptl \bm_h^n \times \bar \ptl^2 \bm_h^n}^2 + \norm{\bar \ptl \hat \bm_h^n \times \bar \ptl^2 \bm_h^n}^2 +\norm{\hat \bm_h^{n-1} \times \bar \ptl^3 \bm_h^n}^2\Big),}
    \end{aligned}
\end{equation}
the \textbf{reconstruction error estimator}
\begin{align*}
    \EE_5 \coloneqq 
    \sum\limits_{n = 2}^N \tau \norm{\bar \ptl^2 (- \Delta_h^n)\bm_h^n}^2,
\end{align*}
the \textbf{projection error estimator}
\begin{align*}
    &\PP \coloneqq \int_{0}^{t_N} \norm{(\bI - \bP(\bM_h)) \ptl_t \bM_h}_{\bHk{1}}^2 \dt,
\end{align*}
where $\bM_h$ is defined in \eqref{EQ:def-Mh_uni} - \eqref{EQ: initial tp-reconstruction},
the \textbf{space error estimators}
\begin{align*}
    &\Lambda_1 \coloneqq  \max\limits_{0 \le n \le N} \, \eta(\bm_h^n; \Hsym^1), \qq \Lambda_2 \coloneqq \sum\limits_{n=1}^N \tau\,\eta(\bar\ptl_n \bm_h^n; \Hsym^1),
    \\
    &\qqq \qq\Lambda_3 \coloneqq \sum\limits_{n=0}^N \tau\,\eta(\bm_h^n; \Lsym^2),
\end{align*}
where $\eta$ is the elliptic a~posteriori estimator defined in \eqref{EQ: eta estimator standard}, the \textbf{data approximation error estimators}
\begin{align*}
    \FF_1 \coloneqq \int_{I_1} 
        \norm{\bff_h - \bff}^2 
        \dt, \qq
    \FF_2 \coloneqq \sum\limits_{n=2}^N \int_{I_n}
        \norm{\bff_h - \bff}^2 
        \dt, 
\end{align*}
the \textbf{changing mesh estimators}
\begin{align*}
    \Xi_1 &\coloneqq \sum\limits_{n=2}^N \tau \norm{(\bI - \bRR)(\bar \ptl - \bar \ptl_n) \bm_h^n}^2, 
    \qq \Xi_2 \coloneqq \tau^4 \max\limits_{2 \le n \le N}\norm{\grad (\bar \ptl^2 - \bar \ptl^2_n) \bm_h^n}^2,
    \\
    \Xi_3 &\coloneqq 
    \sum\limits_{n=2}^N 
    \Big(  
    \tau \norm{(\bar \ptl^B - \bar \ptl_n^B) \bm_h^n}^2
    + \tau^3 \norm{\bar \ptl \big( (\bar \ptl - \bar \ptl_n) \bm_h^n \big)}^2 
    \Big),
\end{align*}
the \textbf{finite element space conforming estimators}
\begin{align*}
    \CC_1 &\coloneqq \tau \norm{(\bI - \bLproj{1})(\hat \bm_h^{1/2} \lambda_h^{1/2})}^2 + \tau \norm{(\bI - \bLproj{1})(\hat \bm_h^{1/2} \times \bar \ptl \bm_h^1)}^2,
    \\
    \CC_2 &\coloneqq \sum\limits_{n=2}^N 
    \Big(  
    \tau \norm{(\bI - \bLproj{n}) (\hat \bm_h^n \times \bar \ptl^B \bm_h^n)}^2
    + \tau \norm{(\bI - \bLproj{n})(\hat \bm_h^n \lambda_h^n)}^2
    \\&\qq 
    + \tau^3 \norm{\bar \ptl \big((\bI - \bLproj{n}) (\hat \bm_h^n \lambda_h^n) \big)}^2
    + \tau^3 \norm{\bar \ptl \big( (\bI - \bLproj{n}) (\hat \bm_h^n \times \bar \ptl^B \bm_h^n) \big)}^2
    \Big),
\end{align*}
the \textbf{extrapolation error estimators}
\begin{align*}
    \QQ_1 &\coloneqq 
        \int_{I_1} \big(\norm{(\bm_h^1 - \hat \bm_h^1) \times \ptl_t \bM_h}^2 +  \norm{(\bm_h - \hat \bm_h) \lambda_h^1}^2\big) \dt
        + \tau^3 \norm{(\bar \ptl \bm_h^1 - \bar \ptl \hat \bm_h^1) \times \bar \ptl \bm_h^1}^2
        \\&\qq
        + \tau^2 \norm{(\bm_h^1 - \hat \bm_h^1) \bar \ptl \lambda_h^1}^2
    \\
    \QQ_2 &\coloneqq 
        \sum\limits_{n=2}^N \Big( \int_{I_n} \big(\norm{(\bm_h^n - \hat \bm_h^n) \times \ptl_t \bM_h}^2 + \norm{(\bm_h - \hat \bm_h) \lambda_h^n}^2 \big) \dt
        \\&\qq+ \tau^3 \norm{(\bar \ptl \bm_h^n - \bar \ptl \hat \bm_h^n) \times \bar \ptl^B \bm_h^n}^2
        + \tau^3 \norm{(\bm_h^n - \hat \bm_h^n)\bar \ptl \lambda_h^n}^2 \Big),
\end{align*}
and the \textbf{initial error estimators}
\begin{equation}
\begin{aligned}
    \II_1 &\coloneqq \tau^3 \norm{\bLproj{1}(\hat \bm_h^1 \times \bar \ptl^2 \bm_h^1)}^2 + \tau^5 \norm{\bLproj{1}(\bar \ptl \lambda_h^1 \bar \ptl \hat \bm_h^1)}^2,
    \qq &\II_2 \coloneqq \tau^3 \norm{\bPsi_0}^2,
    \\
    \mathclap{\hspace{12cm}\II_3 \coloneqq \tau^5 \norm{\bar \ptl \bm_h^1 \times \bar \ptl^2 \bm_h^2}^2
        + \tau^5 \norm{\bar \ptl \hat \bm_h^1 \bar \ptl \lambda_h^1}^2
        + \tau^5 \norm{\bar \ptl \bm_h^1 \bar \ptl \lambda_h^1}^2,}
\end{aligned}
\end{equation}
where $\bPsi_0 \coloneqq \alpha \bar \ptl^2 \bm_h^2 + \bar \ptl \hat \bm_h^1 \times \bar \ptl \bm_h^1 + \hat \bm_h^1 \times \bar \ptl^2 \bm_h^2 - \bar \ptl (\Delta_h^1 \bm_h^1) + \hat \bm_h^1 \bar \ptl \lambda_h^1 + \bar \ptl \hat \bm_h^1 \lambda_h^1 - \bar \ptl \bff_h^1$.
\end{Definition}

In contrast to standard parabolic problems such as the heat equation, the tangent-plane formulation introduces additional consistency terms originating from the nonlinear projection and the predictor.
Our main result, presented in the theorem below, establishes a computable \emph{a posteriori} error bound that separates the distinct sources of discretization error. 

\begin{theorem}[A~posteriori error estimate]
    \label{THM: main theorem}
    Let $\bm$ denote the exact solution of \eqref{EQ: linearized LLG analytical}, and let $\bm_h^n$ be the solution of \eqref{EQ: discrete LLG} for $n = 2, \dots, N$, with $\bm_h^1$ given by \eqref{EQ: init trapezoidal m1}. Then
    \begin{equation}   
    \begin{aligned}
        \label{EQ: a post full}
        \norm{\grad (\bm - \bm_h)}_{\Linf(t_0, t_N; \bLspace)}^2 
        &\lesssim 
        \FF_1 + \FF_2 
        + \tau^4 \big( \EE_1 + \EE_2 + \EE_3  + \EE_4 + \EE_5 + \EE_6 \big) 
        + \Lambda_1 + \Lambda_2 + \Lambda_3 \\
        &\qquad
        + \PP
        + \Xi_1 + \Xi_2 + \Xi_3 
        + \CC_1 + \CC_2 
        + \QQ_1 + \QQ_2
        + \II_1 + \II_2 + \II_3,
    \end{aligned}
    \end{equation}
    where the estimators are defined in Definition~\ref{DEF: error est.}.  
    The hidden constant depends only on $\alpha$, $\norm{\bM_h}_{\bWkp{1}{\infty}}$, $\norm{\ptl_t \bM_h}_{\bWkp{1}{\infty}}$, $\etaWinfty{\bm_h^n}$, and $\norm{\lambda_h}_{\Linf}$.
\end{theorem}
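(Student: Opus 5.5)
We will follow the elliptic-reconstruction/three-point-reconstruction framework of \cite{AkrivisChatzipantelidis:2010, BaenschBrenner:2019}. We introduce the space--time reconstruction $\hat\bM(t)$, defined on each $I_n$ as the quadratic three-point interpolant \eqref{EQ:def-Mh_uni} with $\bm_h^j$ replaced by $\bRR\bm_h^j$; on $I_1$ we use the analogue of \eqref{EQ: initial tp-reconstruction}. We then split
\begin{align*}
\bm-\bm_h=(\bm-\hat\bM)+(\hat\bM-\bRR\bm_h)+(\bRR\bm_h-\bm_h)
\end{align*}
and bound the three pieces separately. The last piece is bounded by $\Lambda_1$ through \eqref{EQ: ell. rec. a post}. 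The middle piece is a pure time-interpolation error $\tfrac12(t-t_n)(t-t_{n-1})\bRR\bar\ptl^2\bm_h^n$. Its gradient is controlled by $\tau^4\EE_1$ together with the corrections $\Lambda$ and $\Xi_2$, which come from replacing $\bar\ptl^2$ by $\bar\ptl_n^2$ and $\bRR$ by $\bI$. The main work is the parabolic error $\be\coloneqq\bm-\hat\bM$.

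\textbf{Step 1 (residual equation).} Using \eqref{EQ: elliptic reconstruction}, we have $a(\bRR\bm_h^n,\bphi)=\dual{-\Delta_h^n\bm_h^n,\bphi}$ for all $\bphi\in\bV$. We test the discrete saddle-point equation \eqref{EQ: discrete LLG SPP}, which holds in $\bV_h^n$, and rewrite it as an identity in $\bV'$:
\begin{align*}
\alpha\bv_h^n+\bLproj{n}(\hat\bm_h^n\times\bv_h^n)-\Delta_h^n\bm_h^n+\bLproj{n}(\hat\bm_h^n\lambda_h^n)=\bff_h^n .
\end{align*}
Here $\bv_h^n=\bar\ptl_n^B\bm_h^n$ up to the projection defects $(\bI-\bLproj{n})$. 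These defects produce $\CC_2$, and the difference between $\bar\ptl^B$ and $\bar\ptl^B_n$ produces $\Xi_3$. Since $\ptl_t\hat\bM$ is affine on $I_n$ with value $\bRR\bar\ptl^B\bm_h^n$ at $t_n$, we obtain a residual equation for $\hat\bM$ of the form
\begin{align*}
\alpha\ptl_t\hat\bM+\hat\bM\times\ptl_t\hat\bM+a(\hat\bM,\cdot)+\lambda_h\hat\bM=\bff+\bRR_{\mathrm{es}} .
\end{align*}
The residual $\bRR_{\mathrm{es}}$ collects the following contributions:
\begin{itemize}
\item time-linearization terms of size $(t-t_n)\bar\ptl^2(\cdots)$, which after expanding the product rule yield $\EE_2,\EE_3,\EE_4,\EE_5,\EE_6$;
\item the difference $(\bI-\bRR)$ applied to $\ptl_t$-quantities, which yields $\Lambda_2,\Lambda_3,\Xi_1$;
\item the replacement of $\hat\bm_h$ by $\bm_h$ and then by $\hat\bM$, which yields $\QQ_2$;
\item data terms, which yield $\FF_2$.
\end{itemize}
The interval $I_1$ is handled the same way from \eqref{EQ: init trapezoidal m1}, \eqref{EQ: init v0 lambda0} and the fictitious value \eqref{EQ: mh -1}. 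This matching of $\bRR$-values at $t_0,t_1$ is the role of Lemma~\ref{LEM: apost rh and rh0}, and it yields $\FF_1,\CC_1,\QQ_1,\II_1,\II_2,\II_3$.

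\textbf{Step 2 (energy estimate in the tangent plane).} The continuous problem holds only against tangent test functions. The reconstruction, however, satisfies $\bm\cdot\ptl_t\bm=0$ only approximately, with defect $(\bI-\bP(\bM_h))\ptl_t\bM_h$, whose $\bH^1$-size is $\PP$. We therefore test both equations with
\begin{align*}
\bphi=\bP(\bm)\ptl_t\be=\ptl_t\bm-\bP(\bm)\ptl_t\hat\bM ,
\end{align*}
and write
\begin{align*}
\bP(\bm)\ptl_t\hat\bM=\ptl_t\hat\bM-(\bI-\bP(\hat\bM))\ptl_t\hat\bM+(\bP(\hat\bM)-\bP(\bm))\ptl_t\hat\bM .
\end{align*}
The last term is handled by Lemma~\ref{LEM: Lipschitz bound}, giving a bound of the form $\gamma\norm{\be}_{\bH^1}$. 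This is where the constants $\norm{\bM_h}_{\bW^{1,\infty}}$, $\norm{\ptl_t\bM_h}_{\bW^{1,\infty}}$ and $\etaWinfty{\bm_h^n}$ enter, since $\hat\bM$ and $\bM_h$ differ by $\bRR-\bI$. The multiplier term $\lambda_h\hat\bM\cdot\bphi$ is not removed by tangency. We bound it with $\norm{\lambda_h}_{\Linf}$ and the Lipschitz estimate, absorbing it into $\tfrac{\alpha}{2}\norm{\ptl_t\be}^2$. The gyroscopic terms $\bm\times\ptl_t\bm-\hat\bM\times\ptl_t\hat\bM$ tested with $\ptl_t\be$ reduce, up to $\be\times\ptl_t\hat\bM$, to $\ptl_t\be\times\ptl_t\be\cdot\ldots=0$. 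With the coercive term $\alpha\norm{\ptl_t\be}^2$ and $\tfrac{d}{dt}\tfrac12\norm{\grad\be}^2$, this gives
\begin{align*}
\frac{d}{dt}\norm{\grad\be}^2+\alpha\norm{\ptl_t\be}^2\lesssim\norm{\be}_{\bH^1}^2+\norm{\bRR_{\mathrm{es}}}^2+\norm{(\bI-\bP)\ptl_t\bM_h}_{\bH^1}^2 .
\end{align*}
The $\Lsym^2$ part of $\norm{\be}_{\bH^1}$ is controlled via $\norm{\be(t)}\le\norm{\be(0)}+\int_0^t\norm{\ptl_t\be}$. Gronwall then yields the bound for $\be$; the initial error $\be(0)$ is again a $\Lambda_1$-type term since $\bm_h^0=\bLproj{0}\bm^0$.

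\textbf{Main obstacle.} The expected difficulty is Step 2. The mismatch between the continuous tangent space $\bT(\bm)$, the weak discrete space $\bT_h^n(\hat\bm_h^n)$ and the test function $\ptl_t\be$ must be handled so that the multiplier and projection terms cost only $\PP$ and Gronwall-absorbable quantities. In particular, $\lambda$ lies only in $V'$, so we must avoid needing the continuous multiplier. Testing with $\bP(\bm)\ptl_t\be$ is intended to kill $\lambda$ exactly; if it instead produces non-absorbable terms, the fallback is to use the inf-sup bound of Lemma~\ref{LEM:inf-sup} to estimate $\norm{\lambda-\lambda_h}_{V'}$ and feed that estimate into the energy estimate.
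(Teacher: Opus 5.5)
Your overall architecture matches the paper: the reconstruction $\hat\bM=\bW$, the three-piece splitting, and Gronwall. Your Step~2 is sound. Testing with $\bP(\bm)\ptl_t\be$ is what the paper does implicitly: it writes $b^{\bm}(\errlam,\ptl_t\errW)=b^{\bm}(\errlam,(\bI-\bP(\bm))\ptl_t\bW)$ and eliminates this by re-testing the error equation with $(\bI-\bP(\bm))\ptl_t\bW$, so the net test function is $\bP(\bm)\ptl_t\errW$. By tangency your multiplier term equals $-\dual{\lambda_h\be,\bphi}$, a Gronwall term, so no inf-sup bound is needed and the obstacle you anticipated does not arise.

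The real gap is in Step~1, and your stated mechanism there is wrong. After using the discrete equation at $t_n$, the residual on $I_n$, $n\ge2$, contains the piecewise constant data error $\bff_h^n-\bff$ and the term $(t-t_n)\bPsi$, with $\bPsi$ as in \eqref{EQ: Psi} (plus $\bRR$-corrections in your version). The entries $\bar\ptl(\Delta_h^n\bm_h^n)$, $\hat\bm_h^n\bar\ptl\lambda_h^n$, $\bar\ptl\hat\bm_h^n\lambda_h^n$ and $\bar\ptl\hat\bm_h^n\times\bar\ptl^B\bm_h^n$ are first differences, not of the form $\bar\ptl^2(\cdot)$. No product-rule expansion makes them small, since $\bPsi$ is a discrete time derivative of the equation and is consistent with $\ptl_t\bff$. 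Bounding $\bPsi$ directly gives $\tau^2\sum_n\tau\norm{\bPsi}^2$, which does not appear in \eqref{EQ: a post full}.

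The missing idea is to subtract $\tau^{-1}$ times the difference of the projected discrete equations \eqref{EQ:pointwise LLG} at levels $n$ and $n-1$. This turns $\bPsi$ into $\bar\ptl\bff_h^n$, which combines with $\bff_h^n-\bff$ into $\bff_h-\bff$ with $\bff_h$ piecewise linear, giving $\FF_2$. What remains are genuinely $O(\tau)$ quantities:
\begin{itemize}
\item $-\frac{\alpha}{2}(\bar\ptl_n^2\bm_h^n-\bar\ptl_{n-1}^2\bm_h^{n-1})$ from \eqref{EQ: transform ptlB}, which is the only source of $\EE_2$;
\item $-\frac{\tau}{2}\hat\bm_h^{n-1}\times\bar\ptl^3\bm_h^n+\tau\bar\ptl\hat\bm_h^n\times\bar\ptl^2\bm_h^n$ from \eqref{EQ: rh apost cross}, giving the last two parts of $\EE_4$;
\item $\tau\bar\ptl\hat\bm_h^n\bar\ptl\lambda_h^n$, giving the second part of $\EE_3$;
\item the difference quotients $\bar\ptl$ of the projection defects, giving the $\tau^3$-parts of $\CC_2$ and $\Xi_3$.
\end{itemize}
For $n=2$ the level-one equation is the trapezoidal rule. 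The fictitious $\bm_h^{-1}$ of \eqref{EQ: mh -1} is introduced precisely to write it in \BDF{2} form, and the mismatch produces $\II_1$. So $\bm_h^{-1}$ and $\II_1$ belong to $I_2$, not to $I_1$ as you state. Likewise, Lemma~\ref{LEM: apost rh and rh0} is this residual bound, not a matching of $\bRR$-values at $t_0,t_1$.

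Finally, $\be(0)=\bm^0-\bRR\bm_h^0$ contains the data error $\bm^0-\bLproj{0}\bm^0$, which $\Lambda_1$ does not control. The paper keeps $\norm{\grad(\bLproj{0}\bm^0-\bm^0)}^2$ as a separate term in its initial-step lemma.
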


Before presenting the proof of Theorem~\ref{THM: main theorem}, we first introduce appropriate time-space reconstructions and derive a corresponding parabolic error equation.
Based on the elliptic reconstruction, we define the linear-in-time reconstruction
\begin{align}
    \label{EQ: elliptic temp reconstruction}
        \bw(t)
    = \bRR \bm_h^n + (t - t_n) \bar \ptl \bRR \bm_h^n,
\end{align}
for $t \in I_n$, $n \geq 1$, and the quadratic (three-point) time-space reconstruction
\begin{align}
    \label{EQ: three-point time-space reconstruction}
    \bW(t) = \bw(t) + \tfrac12 (t - t_n)(t - t_{n-1}) \bar \ptl^2 \bRR \bm_h^n,
\end{align}
for $t \in I_n$, $n \geq 2$.
For the initial interval $I_1$, we set
\begin{align*}
    \bW(t) = \bw(t) + \tfrac12 (t - t_0)(t - t_{1}) \bar \ptl^2 \bRR \bm_h^2.
\end{align*}
For the Lagrangian multiplier, we employ a linear-in-time reconstruction, namely
\begin{align}
    \lambda_h(t) = \lambda_h^n + (t-t_n) \bar \ptl \lambda_h^n,
\end{align}
for $t \in I_n$, $n \ge 1$.
The subsequent analysis requires $\bW(t), \ptl_t \bW(t) \in \bWspacekp{1}{\infty}$ a.e. in $(0, T)$.
This is a natural counterpart of the regularity assumptions imposed on the exact solution in \emph{a~priori} error analysis for the LLG equation,~cf.~\cite[(3.2)]{AFKL:2021}.
To this end, we assume that the elliptic reconstruction satisfies
\begin{align}
    \bRR \bm_h^n \in \bW^{1,\infty}(\Omega),
\end{align}
cf.~Assumption~\ref{AS: Linfty estimators}, for all $n \ge 1$.
Since the time-space reconstruction $\bW$ is obtained from the elliptic
reconstructions by piecewise polynomial interpolation in time, we have
$\bW, \ptl_t \bW\in L^\infty(0, T; \bWspacekp{1}{\infty})$.

As a first step, we derive the error equation using both the three-point reconstruction $\bM_h$ and the time-space reconstruction $\bw$. 
To this end, we introduce the error variables
\begin{align}
    \label{EQ: Error variables}
    \errM \coloneqq \bM_h - \bm,
    \qquad
    \errm \coloneqq \bm_h - \bm,
    \qquad
    \errW \coloneqq \bW - \bm,
    \qquad
    \errw \coloneqq \bw - \bm,
    \qquad
    \errlam \coloneqq \lambda_h - \lambda.
\end{align}
In the parabolic error equation below, the term $\dual{\errm \times \ptl_t \bM_h, \bphi}$ does not involve a time derivative of $\bm$, thus a linear-in-time reconstruction is sufficient. 
Employing the three-point reconstruction in this term merely introduces additional a~posteriori terms without providing any advantage.

\begin{Lemma}[Parabolic error equation]
    \label{LEMMA: parabolic err eq} 
    Let $(\bm, \lambda)$ be the exact solution of \eqref{EQ: linearized LLG analytical SPP} and $(\bm_h^n, \lambda_h^n)$ the solution of \eqref{EQ: discrete LLG SPP}.
    Then, for $t \in I_n$ and $n \ge 2$, we have
    \begin{equation}
    \label{EQ:parabolic err eq reconstruction}
    \begin{aligned}
        {\alpha} \dual{\ptl_t \errM, \bphi} + \dual{\bm \times \ptl_t \errM, \bphi} + \dual{\errm \times \ptl_t \bM_h, \bphi} + a(\errw, \bphi) + b^{\bm}(\errlam, \bphi) + b^{\errm}(\lambda_h, \bphi)
        &= \dual{\br_h, \bphi},
    \end{aligned}
    \end{equation} 
    for all $\bphi \in \Lsym^2(t_1, T; \bV)$, 
    where 
    \begin{equation}
    \label{EQ: residual}
    \begin{aligned}
        \dual{\br_h, \bphi}
        &=
        \dual{\bff_h^n - \bff, \bphi} + {\alpha} \dual{(\bar \ptl^B - \bar \ptl_n^B) \bm_h^n, \bphi} 
            + \dual{(\bm_h^n - \hat \bm_h^n) \times \ptl_t \bM_h, \bphi}
            \\&\qqq
            + (t-t_n) \dual{(\bar \ptl \bm_h^n - \bar \ptl \hat \bm_h^n) \times \bar \ptl^B \bm_h^n, \bphi}
            + (t-t_n)^2 \dual{\bar \ptl \bm_h^n \times \bar \ptl^2 \bm_h^n, \bphi} 
            \\ &\qqq
            + \dual{(\bm_h - \hat \bm_h) \lambda_h^n, \bphi}
            + (t - t_n) \dual{(\bm_h^n - \hat \bm_h^n) \bar \ptl \lambda_h^n, \bphi}
            \\&\qqq
            + (t - t_n)^2 \dual{\bar \ptl \bm_h^n \bar \ptl \lambda_h^n, \bphi}
            + (t - t_n) \dual{\bPsi, \bphi}
            \\ &\qqq
            + \dual{(\bI - \bLproj{n}) (\hat \bm_h^n \times \bar \ptl^B \bm_h^n), \bphi} + \dual{(\bI - \bLproj{n})(\hat \bm_h^n \lambda_h^n), \bphi}
    \end{aligned}
    \end{equation}
    for all $\bphi \in \Lsym^2(t_1, T; \bV)$ and with
    \begin{align}
        \label{EQ: Psi}
        \bPsi \coloneqq {\alpha} \bar \ptl^2 \bm_h^n + \bar \ptl \hat \bm_h^n \times \bar \ptl^B \bm_h^n + \hat \bm_h^n \times \bar \ptl^2 \bm_h^n - \bar \ptl (\Delta_h^n \bm_h^n) + \hat \bm_h^n \bar \ptl \lambda_h^n + \bar \ptl \hat \bm_h^n \lambda_h^n.
    \end{align}
\end{Lemma}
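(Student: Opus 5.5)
The plan is to evaluate the left-hand side of \eqref{EQ:parabolic err eq reconstruction} as ``(discrete-reconstruction terms) minus (exact terms)''. The exact terms cancel by \eqref{EQ: linearized LLG analytical SPP}. What remains is rewritten with the discrete scheme \eqref{EQ: discrete LLG SPP} at time $t_n$, and the discrepancy is collected in $\br_h$. Concretely, we expand
\begin{align*}
\alpha\dual{\ptl_t\errM,\bphi}+\dual{\bm\times\ptl_t\errM,\bphi}+\dual{\errm\times\ptl_t\bM_h,\bphi}
=\alpha\dual{\ptl_t\bM_h,\bphi}+\dual{\bm_h\times\ptl_t\bM_h,\bphi}-\alpha\dual{\ptl_t\bm,\bphi}-\dual{\bm\times\ptl_t\bm,\bphi}.
\end{align*}
The cross terms $\bm\times\ptl_t\bM_h$ cancel, which is exactly why the mixed form $\bm\times\ptl_t\errM+\errm\times\ptl_t\bM_h$ was chosen. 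Similarly, $b^{\bm}(\errlam,\bphi)+b^{\errm}(\lambda_h,\bphi)=\dual{\bm_h\lambda_h,\bphi}-\dual{\bm\lambda,\bphi}$. For the diffusion term we use $a(\bw,\bphi)=\dual{-\Delta_h^n\bm_h^n,\bphi}+(t-t_n)\dual{-\bar\ptl\Delta_h^n\bm_h^n,\bphi}$, which holds by Definition~\ref{DEF: Elliptic reconstruction} and linearity. Inserting the weak form \eqref{EQ: linearized LLG analytical SPP} with test function $\bphi\in\bV$, the left-hand side becomes
\[
\alpha\dual{\ptl_t\bM_h,\bphi}+\dual{\bm_h\times\ptl_t\bM_h,\bphi}-\dual{\Delta_h^n\bm_h^n,\bphi}-(t-t_n)\dual{\bar\ptl\Delta_h^n\bm_h^n,\bphi}+\dual{\bm_h\lambda_h,\bphi}-\dual{\bff,\bphi}.
\]

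Next, we turn the discrete scheme into a pointwise identity in $\bL^2$. Since $\bV_h^n$ is finite dimensional, the first line of \eqref{EQ: discrete LLG SPP} with \eqref{EQ: discrete Laplacian def} says that $\bLproj{n}\big(\alpha\bv_h^n+\hat\bm_h^n\times\bv_h^n-\Delta_h^n\bm_h^n+\hat\bm_h^n\lambda_h^n-\bff_h^n\big)=0$. Because $\Delta_h^n\bm_h^n$ and $\bff_h^n$ already lie in $\bV_h^n$, this gives
\[
\alpha\bar\ptl_n^B\bm_h^n+\hat\bm_h^n\times\bar\ptl^B\bm_h^n-\Delta_h^n\bm_h^n+\hat\bm_h^n\lambda_h^n-\bff_h^n=(\bI-\bLproj{n})(\hat\bm_h^n\times\bar\ptl^B\bm_h^n)+(\bI-\bLproj{n})(\hat\bm_h^n\lambda_h^n).
\]
The Neumann trace term in \eqref{EQ: discrete Laplacian def} is harmless because $\bm_h^n$ enters only through $a(\bm_h^n,\cdot)$, so the identity $\dual{-\Delta_h^n\bm_h^n,\bphi_h}=a(\bm_h^n,\bphi_h)$ is what is used. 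We test this identity with arbitrary $\bphi\in\bV$ and subtract it from the expression above. The differences $\alpha(\bar\ptl^B-\bar\ptl_n^B)\bm_h^n$, $\bff_h^n-\bff$, and the two $(\bI-\bLproj{n})$ terms of \eqref{EQ: residual} then appear directly.

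The remaining step, which I expect to be the main bookkeeping obstacle, is to expand each time-dependent quantity on $I_n$ around $t_n$. We use $\ptl_t\bM_h=\bar\ptl^B\bm_h^n+(t-t_n)\bar\ptl^2\bm_h^n$ from \eqref{EQ:def-Mh_uni}, together with $\bm_h=\bm_h^n+(t-t_n)\bar\ptl\bm_h^n$ and $\lambda_h=\lambda_h^n+(t-t_n)\bar\ptl\lambda_h^n$. For the nonlinear products we write $\bm_h\times\ptl_t\bM_h=(\bm_h^n-\hat\bm_h^n)\times\ptl_t\bM_h+\hat\bm_h^n\times\ptl_t\bM_h+(t-t_n)\bar\ptl\bm_h^n\times\ptl_t\bM_h$. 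We then insert $\bar\ptl\bm_h^n=(\bar\ptl\bm_h^n-\bar\ptl\hat\bm_h^n)+\bar\ptl\hat\bm_h^n$ in the $\bar\ptl^B$-part and keep $\bar\ptl\bm_h^n\times\bar\ptl^2\bm_h^n$ in the $(t-t_n)^2$-part.

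The product $\bm_h\lambda_h$ is handled analogously. We write it as $(\bm_h-\hat\bm_h)\lambda_h^n+\hat\bm_h\lambda_h^n+\bm_h(t-t_n)\bar\ptl\lambda_h^n$, where $\hat\bm_h$ is the linear interpolant of the predictors, and expand $\bm_h=\bm_h^n-\hat\bm_h^n+\hat\bm_h^n+(t-t_n)\bar\ptl\bm_h^n$.

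After subtracting the $t_n$ identity, all zeroth-order terms cancel. The first-order coefficients of $(t-t_n)$ are $\alpha\bar\ptl^2\bm_h^n$, $\bar\ptl\hat\bm_h^n\times\bar\ptl^B\bm_h^n$, $\hat\bm_h^n\times\bar\ptl^2\bm_h^n$, $-\bar\ptl\Delta_h^n\bm_h^n$, $\hat\bm_h^n\bar\ptl\lambda_h^n$ and $\bar\ptl\hat\bm_h^n\lambda_h^n$, and these combine exactly into $\bPsi$ from \eqref{EQ: Psi}. The leftover pieces are precisely the extrapolation terms and the quadratic terms in \eqref{EQ: residual}. Care is needed to keep $\bar\ptl$ versus $\bar\ptl_n$ consistent when meshes change, since $\Delta_h^n\bm_h^n$ is defined only via $\bV_h^n$. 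Finally, the identity holds for a.e.\ $t$ and all $\bphi(t)\in\bV$, which extends to $\bphi\in\Lsym^2(t_1,T;\bV)$ by integration.
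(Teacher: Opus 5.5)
Your proposal is correct and follows the paper's proof essentially step by step. You cancel the exact terms via the weak form, use $a(\bw,\bphi)=-\dual{\Delta_h^n\bm_h^n,\bphi}-(t-t_n)\dual{\bar\ptl(\Delta_h^n\bm_h^n),\bphi}$, and subtract the $\bLproj{n}$-projected discrete equation, which produces $\alpha(\bar\ptl^B-\bar\ptl_n^B)\bm_h^n$, $\bff_h^n-\bff$ and the two $(\bI-\bLproj{n})$ terms. You then expand $\bm_h\times\ptl_t\bM_h$ and $\bm_h\lambda_h$ around $t_n$ exactly as the paper does. Your remark on the Neumann trace term does not actually justify dropping it, but the paper tacitly uses the same identity $\dual{-\Delta_h^n\bm_h^n,\bphi_h}=a(\bm_h^n,\bphi_h)$, so this is not a gap relative to the paper's argument.
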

\begin{proof}
    We start by using the orthogonality of the $\Lsym^2$ projection $ \bLproj{n}$ onto $\bV_h^n$,
    the definition of the discrete Laplacian \eqref{EQ: discrete Laplacian def} and the discrete equation \eqref{EQ: discrete LLG SPP} to obtain 
    \begin{equation}
    \label{EQ:pointwise LLG}
    \begin{aligned}
        &\dual{{\alpha} \bar \ptl_n^B \bm_h^n + \bLproj{n} (\hat \bm_h^n \times \bar \ptl^B \bm_h^n) - \Delta_h^n \bm_h^n + \bLproj{n} (\hat \bm_h^n \lambda_h^n) - \bff_h^n, \bphi}
        \\&\qq= 
         \dual{{\alpha} \bar \ptl^B \bm_h^n + \hat \bm_h^n \times \bar \ptl^B \bm_h^n - \bff_h^n, \bLproj{n} \bphi} + a(\bm_h^n, \bLproj{n} \bphi) + b^{\hat{\bm}_h^n}(\lambda_h^n, \bLproj{n} \bphi)
        = 0
    \end{aligned}
    \end{equation}
    for $n \ge 2$ and for all $\bphi \in \Lsym^2(t_1, T; \bV)$.
    Using the the weak formulation \eqref{EQ: linearized LLG analytical SPP} and subtracting equation \eqref{EQ:pointwise LLG}, we have
    \begin{equation}
    \label{EQ: elliptic err eq 1.1}
    \begin{aligned}
        &{\alpha} \dual{\ptl_t \errM, \bphi} + \dual{\bm \times \ptl_t \errM, \bphi} + \dual{\errm \times \ptl_t \bM_h, \bphi} + a(\errw, \bphi) + b^{\bm}(\errlam, \bphi) + b^{\errm}(\lambda_h, \bphi)
        \\& \qq=
            \dual{\bff_h^n - \bff, \bphi} 
            + \alpha \dual{\ptl_t \bM_h - \bar \ptl_n^B \bm_h^n, \bphi} + \dual{\bm_h \times \ptl_t \bM_h - \bLproj{n} (\hat \bm_h^n \times \bar \ptl^B \bm_h^n), \bphi} 
            \\ &\qqq
            + a(\bw, \bphi) + \dual{\Delta_h^n \bm_h^n, \bphi}
            + \dual{\bm_h \lambda_h - \bLproj{n} (\hat \bm_h^n \lambda_h^n), \bphi}.
    \end{aligned}
    \end{equation}
    Further, we have, by the definition of $\bw$ \eqref{EQ: Error variables}, as well as the definition of the elliptic reconstruction \eqref{EQ: elliptic reconstruction}
    \begin{equation}\label{EQ: a(bw,phi) = ...}
    \begin{aligned}
        a(\bw, \bphi) 
        &= a(\bRR \bm_h^n, \bphi) + (t - t_n) \, a(\bar \ptl \bRR \bm_h^n, \bphi)
        \\&= - \dual{\Delta_h^n \bm_h^n, \bphi} - (t - t_n) \dual{ \bar \ptl (\Delta_h^n \bm_h^n), \bphi}.
    \end{aligned}   
    \end{equation}
    Rewriting the second on the right-hand side of \eqref{EQ: elliptic err eq 1.1} with the definition of the three-point reconstruction \eqref{EQ:def-Mh_uni},
    adding and subtracting $\dual{\hat \bm_h^n \times \bar \ptl^B \bm_h^n, \bphi}$ to the third term and $\dual{\hat \bm_h^n \lambda_h^n, \bphi}$ to the last term on the right-hand side, we obtain
    \begin{equation}
    \label{EQ: elliptic err eq 2.0}
    \begin{aligned}
        &{\alpha} \dual{\ptl_t \errM, \bphi} + \dual{\bm \times \ptl_t \errM, \bphi} + \dual{\errm \times \ptl_t \bM_h, \bphi} + a(\errw, \bphi) + b^{\bm}(\errlam, \bphi) + b^{\errm}(\lambda_h, \bphi)
        \\& \qq=
            \dual{\bff_h^n - \bff, \bphi} + \alpha \dual{(\bar \ptl^B - \bar \ptl_n^B) \bm_h^n, \bphi} + \dual{\bm_h \times \ptl_t \bM_h - \hat \bm_h^n \times \bar \ptl^B \bm_h^n, \bphi} 
            \\ &\qqq
            + \dual{\bm_h \lambda_h - \hat \bm_h^n \lambda_h^n, \bphi}
            + (t - t_n) \Big( {\alpha} \dual{\bar \ptl^2 \bm_h^n, \bphi} - \dual{\bar \ptl (\Delta_h^n \bm_h^n), \bphi} \Big)
            \\ &\qqq
            + \dual{(\bI - \bLproj{n}) (\hat \bm_h^n \times \bar \ptl^B \bm_h^n), \bphi} + \dual{(\bI - \bLproj{n})(\hat \bm_h^n \lambda_h^n), \bphi}.
    \end{aligned}
    \end{equation}
    We continue by rewriting the third term on the right-hand side of \eqref{EQ: elliptic err eq 2.0} by 
    \begin{equation}
    \label{EQ: ell err eq rewrite 2.1}
    \begin{aligned}
        &\dual{\bm_h \times \ptl_t \bM_h - \hat \bm_h^n \times \bar \ptl^B \bm_h^n, \bphi}
        \\&\qq= 
        \dual{(\bm_h^n - \hat \bm_h^n) \times \ptl_t \bM_h, \bphi}
        + (t-t_n) \dual{(\bar \ptl \bm_h^n - \bar \ptl \hat \bm_h^n) \times \bar \ptl^B \bm_h^n, \bphi}
        \\&\qqq
        + (t-t_n)^2 \dual{\bar \ptl \bm_h^n \times \bar \ptl^2 \bm_h^n, \bphi} 
        + (t - t_n) \dual{\bar \ptl \hat \bm_h^n \times \bar \ptl^B \bm_h^n + \hat \bm_h^n \times \bar \ptl^2 \bm_h^n, \bphi}.
    \end{aligned}
    \end{equation}
    Similarly, 
    we rewrite the fourth term on the right-hand side of \eqref{EQ: elliptic err eq 2.0} by
    \begin{equation}
    \label{EQ: ell err eq rewrite 2.2}
    \begin{aligned}
        \dual{\bm_h \lambda_h - \hat \bm_h^n \lambda_h^n, \bphi}
        &= 
        \dual{(\bm_h - \hat \bm_h) \lambda_h^n, \bphi}
        + (t - t_n)\dual{(\bm_h^n - \hat \bm_h^n) \bar \ptl \lambda_h^n, \bphi}
        \\&\qq
        + (t - t_n) \dual{\hat \bm_h^n \bar \ptl \lambda_h^n + \bar \ptl \hat \bm_h^n \lambda_h^n, \bphi}
        + (t - t_n)^2 \dual{\bar \ptl \bm_h^n \bar \ptl \lambda_h^n, \bphi}.
    \end{aligned}
    \end{equation}
    Finally, by inserting \eqref{EQ: ell err eq rewrite 2.1} and \eqref{EQ: ell err eq rewrite 2.2} into the right-hand side of \eqref{EQ: elliptic err eq 2.0}, we conclude the proof.
\end{proof}

Having established the parabolic error equation, we can now proceed with the proof of Theorem~\ref{THM: main theorem}. For clarity and to emphasize the key ideas, we postpone the proofs of the a~posteriori terms and the initial time-step a~posteriori bound to the next section.
\begin{proof}[Proof of Theorem~\ref{THM: main theorem}]
Rewriting the parabolic error equation \eqref{EQ:parabolic err eq reconstruction}, we have 
\begin{align*}
    &{\alpha}\dual{\ptl_t \errW, \bphi} + \dual{\bm \times \ptl_t \errW, \bphi} + \dual{\errW \times \ptl_t \bM_h, \bphi} + a(\errw, \bphi) + b^{\bm}(\errlam, \bphi) + b^{\errW}(\lambda_h, \bphi)
    \\ 
    &\qqq = {\alpha} \dual{\ptl_t (\bW - \bM_h), \bphi} + \dual{\bm \times \ptl_t (\bW - \bM_h), \bphi} + \dual{(\bW - \bm_h) \times \ptl_t \bM_h, \bphi} 
    \\&\qqq \qq+ \dual{(\bW - \bm_h)\lambda_h, \bphi} + \dual{\br_h, \bphi}.
\end{align*}
Now, since $\norm{\bm}_{\bLinf} = 1$ and $\dual{\bm \times \ptl_t \errW, \ptl_t \errW} = 0$, testing with $\bphi = \ptl_t \errW$ yields
\begin{align*}
        &{\alpha} \norm{\ptl_t \errW}^2 + a(\errw, \ptl_t \errW) + b^{\bm}(\errlam, \ptl_t \errW) + b^{\errW}(\lambda_h, \ptl_t \errW)
    \\ &\qq \le
        \norm{\ptl_t \errW} \Big(\norm{\ptl_t \bM_h}_{\bLinf} \norm{\errW} + {(1 + \alpha)} \norm{\ptl_t (\bW - \bM_h)} 
       \\&\qqq+ \norm{(\bW - \bm_h) \times \ptl_t \bM_h} + \norm{(\bW - \bm_h)\lambda_h} + \norm{\br_h}\Big).
\end{align*}
Next, we treat the bilinear form $a$ as
\begin{align*}
    a(\errw, \ptl_t \errW)
    &=
    a(\errW, \ptl_t \errW) + a(\bw - \bW, \ptl_t \errW)
    =
    a(\errW, \ptl_t \errW) + \dual{ -\Delta_h (\bw - \bW), \ptl_t \errW},
\end{align*}
where $- \Delta_h (\bw - \bW) = \frac12 (t-t_n)(t - t_{n-1}) \bar\ptl^2 (-\Delta_h^n) \bm_h^n$ by the definition \eqref{EQ: elliptic temp reconstruction} and \eqref{EQ: three-point time-space reconstruction} of the reconstructions for $t \in I_n$,
since $\bw$ and $\bW$ are defined through pointwise (in time) elliptic reconstructions.
Further, using $a(\errW, \ptl_t \errW) = \frac12 \frac{\de}{\dt} \norm{\grad \errW}^2,$ we obtain
\begin{equation}
\label{EQ: Proof Main THM Eq1}
\begin{aligned}        
    &{\alpha}\norm{\ptl_t \errW}^2 + \frac12 \frac{\de}{\dt} \norm{\grad \errW}^2 + b^{\bm}(\errlam, \ptl_t \errW) + b^{\errW}(\lambda_h, \ptl_t \errW)
    \\ &\qq \le
        \norm{\ptl_t \errW} \Big(\norm{\ptl_t \bM_h}_{\bLinf} \norm{\errW} + {(1 + \alpha)} \norm{\ptl_t (\bW - \bM_h)} 
        + \norm{(\bW - \bm_h) \times \ptl_t \bM_h}
        \\&\qqq
         + \norm{(\bW - \bm_h)\lambda_h} + \norm{\br_h} + \norm{\Delta_h (\bw - \bW)}\Big).
\end{aligned}
\end{equation}
To reduce the notation we define $c_\infty \coloneqq \norm{\ptl_t \bM_h}_{\bLinf} + \norm{\lambda_h}_{\Linf}$, as well as,
\begin{equation}
\begin{aligned}
    \label{EQ: def of AA}
    \AA &\coloneqq 
        {(1 + \alpha)} \norm{\ptl_t (\bW - \bM_h)} 
        + c_\infty \norm{(\bW - \bm_h)} 
        + \norm{\br_h} 
        + \norm{\Delta_h (\bw - \bW)},
\end{aligned}
\end{equation}
Thus we can rewrite equation \eqref{EQ: Proof Main THM Eq1} by
\begin{align*}
        &{\alpha}\norm{\ptl_t \errW}^2 + \frac12 \frac{\de}{\dt} \norm{\grad \errW}^2 + b^{\bm}(\errlam, \ptl_t \errW) + b^{\errW}(\lambda_h, \ptl_t \errW)
    \le
        \norm{\ptl_t \errW} \Big(\norm{\ptl_t \bM_h}_{\bLinf} \norm{\errW} + \AA \Big).
\end{align*}
Further, using the parabolic error equation \eqref{EQ:parabolic err eq reconstruction}, $\bm \cdot \ptl_t \bm = 0$ a.e. and $\bm \cdot \bP(\bm) \ptl_t \bM_h = 0$ a.e., we obtain
\begin{align*}
    b^{\bm}(\errlam, \ptl_t \errW)
    &= b^{\bm}(\errlam, \ptl_t \bW)
    = b^{\bm}(\errlam, (\bI - \bP(\bm)) \ptl_t \bW)
    \\&\le 
    \norm{(\bI - \bP(\bm)) \ptl_t \bW} \Big({(1 + \alpha)} \norm{\ptl_t \errM} + c_\infty \norm{\errm} + \norm{\br_h} + \norm{\Delta_h(\bw - \bW)}\Big)
    \\&\qq + \norm{\grad \errW} \norm{\grad(\bI - \bP(\bm)) \ptl_t \bW},
\end{align*}
since
\begin{align*}
    a(\errw, (\bI - \bP(\bm)) \ptl_t \bW) = a(\errW, (\bI - \bP(\bm)) \ptl_t \bW) + \dual{- \Delta_h (\bw - \bW), (\bI - \bP(\bm)) \ptl_t \bW}.
\end{align*}
Using $\norm{\errm} \le \norm{\errW} + \norm{\bW - \bm_h}$, $\norm{\ptl_t \errM} \le \norm{\ptl_t \errW} + \norm{\ptl_t (\bW - \bM_h)}$ and the definition of $\AA$ \eqref{EQ: def of AA} yields
\begin{equation}
\begin{aligned}
    \label{EQ: est b^m}
    b^{\bm}(\errlam, \ptl_t \errW)
    &\le 
    \norm{(\bI - \bP(\bm)) \ptl_t \bW} \Big({(1 + \alpha)} \norm{\ptl_t \errW} + c_\infty \norm{\errW} + \AA \Big) 
    \\&\qq 
    + \norm{\grad \errW} \norm{\grad(\bI - \bP(\bm)) \ptl_t \bW}.
\end{aligned}
\end{equation}
Utilizing $b^{\errW}(\lambda_h, \ptl_t \errW) \le \norm{\lambda_h}_{\Linf} \norm{\errW} \norm{\ptl_t \errW}$ and \eqref{EQ: est b^m}, we deduce
\begin{align*}
        {\alpha} \norm{\ptl_t \errW}^2 + \frac12 \frac{\de}{\dt} \norm{\grad \errW}^2
        &\le
        \norm{\ptl_t \errW} \Big(c_\infty \norm{\errW} + \AA\Big) 
        + \norm{\grad \errW} \norm{\grad(\bI - \bP(\bm)) \ptl_t \bW}
        \\ &\qq
        + \norm{(\bI - \bP(\bm)) \ptl_t \bW} \Big({(1 + \alpha)} \norm{\ptl_t \errW} + c_\infty \norm{\errW} + \AA \Big) .
\end{align*}
Applying Young's inequality, making use of
\begin{align*}
    \norm{\ptl_t \errW}^2 \ge \frac{\de}{\dt} \norm{\errW}^2 - \norm{\errW}^2
\end{align*}
and subtracting $\frac12 \alpha \norm{\ptl_t \errW}^2$ from both sides, we obtain
\begin{align*}
        {\min\{\alpha, 1 \}}\frac{\de}{\dt} \norm{\errW}_{\bHk{1}}^2 
        &\le
        \norm{\errW}^2 
        + {\frac2\alpha} \Big(c_\infty \norm{\errW} + \AA\Big)^2
        + 2 \norm{\grad \errW} \norm{\grad(\bI - \bP(\bm)) \ptl_t \bW}
        \\ &\quad
        + {\frac{2(1+\alpha)^2}{\alpha}} \norm{(\bI - \bP(\bm)) \ptl_t \bW}^2 
        + 2 \norm{(\bI - \bP(\bm)) \ptl_t \bW} \Big(c_\infty \norm{\errW} + \AA \Big).
\end{align*}
Define 
\begin{align}
    \label{EQ: def gammas}
    \gamma_{0, \bW} \coloneqq (1 + \norm{\bW}_{\bLinf}) \norm{\ptl_t \bW}_{\bLinf}, \qq \gamma_{1, \bW} \coloneqq \gamma(\bW, \ptl_t \bW),
\end{align}
where $\gamma$ is defined in Lemma~\ref{LEM: Lipschitz bound}.
Next, we estimate using the Lipschitz-type bound of the orthogonal projection in Lemma~\ref{LEM: Lipschitz bound}
\begin{align*}
    \norm{(\bI - \bP(\bm)) \ptl_t \bW}
    &\le 
    \norm{(\bI - \bP(\bW)) \ptl_t \bW}
    + \gamma_{0, \bW} \norm{\errW},
    \\ 
    \norm{\grad(\bI - \bP(\bm)) \ptl_t \bW}
    &\le 
    \norm{\grad(\bI - \bP(\bW)) \ptl_t \bW} + \gamma_{1, \bW} \norm{\errW}_{\bHk{1}}.
\end{align*}
Applying the above equations yields
\begin{align*}
        {\min\{\alpha, 1 \}}\frac{\de}{\dt} \norm{\errW}_{\bHk{1}}^2  
    &\le
        \norm{\errW}^2 + {\frac2\alpha} \Big(c_\infty \norm{\errW} + \AA \Big)^2 + 2 \norm{\grad \errW} \norm{\grad(\bI - \bP(\bW)) \ptl_t \bW} 
        \\ &\quad 
        + {\frac{2(1+\alpha)^2}{\alpha}} \big( \norm{(\bI - \bP(\bW)) \ptl_t \bW} + \gamma_{0, \bW} \norm{\errW} \big)^2
        \\ &\quad 
        + 2 \big(\norm{(\bI - \bP(\bW)) \ptl_t \bW} + \gamma_{0, \bW} \norm{\errW}\big) \Big(c_\infty \norm{\errW} + \AA \Big)
        \\&\quad
        + 2 \norm{\grad \errW} \gamma_{1, \bW} \norm{\errW}_{\bHk{1}}.
\end{align*}
Exploiting Young's inequality, $(a+b)^2 \le 2 (a^2 + b^2)$ and reordering yields
\begin{align*}
        {\min\{\alpha, 1 \}} \frac{\de}{\dt} \norm{\errW}_{\bHk{1}}^2  
    &\le
        c_0 \norm{\errW}^2 + c_1 \norm{\errW}_{\bHk{1}}^2 
        + \Big({\frac{4(1+\alpha)^2}{\alpha}} + 1 + c_\infty^2\Big) \norm{(\bI - \bP(\bW)) \ptl_t \bW}^2 
        \\ &\quad 
        + {\Big(\frac4\alpha + 2\Big)} \AA^2
        + \norm{\grad(\bI - \bP(\bW)) \ptl_t \bW}^2,
\end{align*}
where $c_0 \coloneqq 2 + {\frac4\alpha} c_\infty^2 + 2 c_\infty \gamma_{0, \bW} + {(\frac{4(1+\alpha)^2}{\alpha} + 1)} \gamma_{0, \bW}^2$ and $c_1 \coloneqq 1 + 2 \gamma_{1,\bW}$.
Integrating from $t = t_1$ to $t_N$, we obtain
\begin{align*}
        \norm{\errW (t_N)}_{\bHk{1}}^2  
    &\le
        \norm{\errW (t_1)}_{\bHk{1}}^2  
        +
        \frac{1}{{\min\{\alpha, 1 \}}}
        \int_{t_1}^{t_N}
        \Big(c_0 \norm{\errW}^2 + c_1 \norm{\errW}_{\bHk{1}}^2\Big) \dt
        + \frac{1}{{\min\{\alpha, 1 \}}} \tilde \AA,
\end{align*}
where
\begin{align*}
    \tilde \AA &\coloneqq \int_{t_1}^{t_N} \bigg\{
        \tilde c_\infty \norm{(\bI - \bP(\bW)) \ptl_t \bW}^2 
        + {\Big(\frac4\alpha + 2\Big)} \AA^2
        + \norm{\grad(\bI - \bP(\bW)) \ptl_t \bW}^2 \bigg\}\dt
\end{align*}
with $\tilde c_\infty \coloneqq {\frac{4(1+\alpha)^2}{\alpha}} + 1 + c_\infty^2$.
Thus, estimating $\norm{\errW}^2 \le \norm{\errW}_{\bHk{1}}^2$, we conclude by Gronwall's inequality
\begin{align*}
        \norm{\errW (t_N)}_{\bHk{1}}^2  
    &\le
        \Big(\norm{\errW (t_1)}_{\bHk{1}}^2 + \frac1{{\min\{\alpha, 1 \}}}\tilde \AA\Big)
        \ee^{
        \int_{t_1}^{t_N}
         \frac{c_0 + c_1}{{\min\{\alpha, 1 \}}}\dt}.
\end{align*}
We finalize the a~posteriori estimate \eqref{EQ: a post full} for $t \in [t_1, t_N]$ by 
\begin{align*}
    \norm{\grad (\bm - \bm_h)(t)}^2
    \le
    \norm{\grad (\bM_h - \bm_h)(t)}^2 
    + \norm{\grad (\bW - \bM_h)(t)}^2
    + \norm{\grad \errW(t)}^2,
\end{align*}
where we estimate the remaining terms in the next section.
\end{proof}

\subsection{Remaining error terms}
In this section, we provide the remaining a posteriori error bounds to conclude     Theorem~\ref{THM: main theorem}.
The error estimates are mostly derived by adapting the approach of \cite{BaenschBrenner:2019}, where comparable error terms were analyzed.
We first estimate the initial time-step, which is provided by \eqref{EQ: init trapezoidal m1}. 
To simplify the notation, we assume $\bV_h^0 = \bV_h^1 = \bV_h^2$.

\begin{Lemma}[Initial time-step a posteriori estimate] 
    Assume $V_h^0 = V_h^1 = V_h^2$. Let $(\bm, \lambda)$ be the exact solution of \eqref{EQ: linearized LLG analytical SPP}. Further, let $(\bm_h^1, \lambda_h^1)$ be the solution of \eqref{EQ: init trapezoidal m1} and $(\bm_h^2, \lambda_h^2)$ be the solution of \eqref{EQ: discrete LLG SPP}. Then 
    \begin{align*}
        &\norm{\grad (\bm - \bm_h)}_{\Linf(0, t_1; \bLspace)}^2  
        \\&\quad \lesssim 
        \norm{\grad( \bP_0^0 \bm(0) - \bm(0))}^2
        + \norm{\grad (\bM_h - \bm_h)}_{\Linf(0, t_1; \bLspace)}^2
        + \norm{\grad(\bW - \bM_h)}_{\Linf(0, t_1; \bLspace)}^2
        \\&\qq
        + \int_{0}^{t_1} \Big( 
        \norm{\ptl_t (\bW - \bM_h)}^2 
        + \norm{\bW - \bm_h}^2
        + \norm{\br_h^0}^2  
        + \norm{\Delta_h (\bw - \bW)}^2
        \\ &\qqq 
        + \norm{(\bI - \bP(\bW)) \ptl_t \bW}_{\bHk{1}}^2 \Big)\dt,
    \end{align*}
    where the initial reconstruction $\bM_h$ is given in \eqref{EQ: initial tp-reconstruction} and $\br_h^0$ is defined by
    \begin{equation}        
    \label{EQ: r_h^i}
    \begin{split}\raisetag{-13ex}
        \dual{\br_h^0, \bphi} &=
            \dual{\bff_h - \bff, \bphi} + \dual{(\bm_h^1 - \hat \bm_h^1) \times \ptl_t \bM_h, \bphi}
            + (t-t_1) \dual{(\bar \ptl \bm_h^1 - \bar \ptl \hat \bm_h^1) \times \bar \ptl \bm_h^1, \bphi}
        \\&\qq\hspace{-8pt}
            + (t-t_1)(t-t_{1/2}) \dual{\bar \ptl \bm_h^1 \times \bar \ptl^2 \bm_h^2, \bphi} 
            + \dual{(\bm_h - \hat \bm_h) \lambda_h^1, \bphi}
        \\&\qq\hspace{-8pt}
            + (t - t_1) \dual{(\bm_h^1 - \hat \bm_h^1) \bar \ptl \lambda_h^1, \bphi}
            - \frac{\tau^2}{4}\dual{\bar \ptl \hat \bm_h^1 \bar \ptl \lambda_h^1, \bphi}
            + (t - t_1)^2 \dual{\bar \ptl \bm_h^1 \bar \ptl \lambda_h^1, \bphi}
        \\&\qq\hspace{-8pt}
            + \dual{(\bI - \bP_0^1) (\hat \bm_h^{1/2} \times \bar \ptl \bm_h^1), \bphi} + \dual{(\bI - \bP_0^1)(\hat \bm_h^{1/2} \lambda_h^{1/2}), \bphi}
            + (t - t_{1/2}) \dual{\bPsi_0, \bphi}
    \end{split}
    \end{equation}
    with $\bPsi_0 = \alpha \bar \ptl^2 \bm_h^2 + \bar \ptl \hat \bm_h^1 \times \bar \ptl \bm_h^1 + \hat \bm_h^1 \times \bar \ptl^2 \bm_h^2 - \bar \ptl (\Delta_h^1 \bm_h^1) + \hat \bm_h^1 \bar \ptl \lambda_h^1 + \bar \ptl \hat \bm_h^1 \lambda_h^1 - \bar \ptl \bff_h^1$.
\end{Lemma}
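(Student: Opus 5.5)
The plan is to repeat the energy argument from the proof of Theorem~\ref{THM: main theorem} on the initial interval $I_1=(t_0,t_1]$. Two ingredients change: a parabolic error equation adapted to the trapezoidal step \eqref{EQ: init trapezoidal m1} with residual $\br_h^0$, and an initial value $\errW(0)$ that no longer comes from a previous step and has to be bounded separately.

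\emph{Error equation on $I_1$.} First derive, in analogy to Lemma~\ref{LEMMA: parabolic err eq},
\begin{align*}
  \alpha \dual{\ptl_t \errM, \bphi} + \dual{\bm \times \ptl_t \errM, \bphi} + \dual{\errm \times \ptl_t \bM_h, \bphi} + a(\errw, \bphi) + b^{\bm}(\errlam, \bphi) + b^{\errm}(\lambda_h, \bphi) = \dual{\br_h^0, \bphi}
\end{align*}
for $t \in I_1$ and all $\bphi \in \Lsym^2(0,t_1;\bV)$. To get there:
\begin{itemize}
\item Rewrite \eqref{EQ: init trapezoidal m1} pointwise with $\bLproj{1}$ and $\Delta_h^1$, exactly as in \eqref{EQ:pointwise LLG}.
\item Use $\bV_h^0=\bV_h^1=\bV_h^2$, so that no changing-mesh terms appear.
\item Insert $\ptl_t\bM_h = \bar\ptl\bm_h^1 + (t-t_{1/2})\bar\ptl^2\bm_h^2$ from \eqref{EQ: initial tp-reconstruction}.
\item Treat the Laplacian via the elliptic reconstruction: $a(\bw,\bphi) = -\dual{\Delta_h^1\bm_h^1,\bphi} - (t-t_1)\dual{\bar\ptl(\Delta_h^1\bm_h^1),\bphi}$.
\end{itemize}
The trapezoidal averages $\bm_h^{1/2}$, $\lambda_h^{1/2}$, $\hat\bm_h^{1/2}$, $\bff_h^{1/2}$ are then expanded around $t_1$, e.g. $\bm_h^{1/2} = \bm_h^1 - \tfrac{\tau}{2}\bar\ptl\bm_h^1$, and $t-t_1 = (t-t_{1/2}) - \tau/2$. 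After this expansion, the terms linear in $t-t_{1/2}$ collect into $\bPsi_0$. The cross-product and multiplier terms are split as in \eqref{EQ: ell err eq rewrite 2.1}--\eqref{EQ: ell err eq rewrite 2.2}; the remaining product of the two $\tau/2$ shifts produces the term $-\tfrac{\tau^2}{4}\dual{\bar\ptl\hat\bm_h^1\bar\ptl\lambda_h^1,\bphi}$.

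This bookkeeping is the main obstacle. One must check that every $\tau/2$ shift from the averaging is absorbed either into $(t-t_{1/2})\bPsi_0$ or into the explicitly listed terms of \eqref{EQ: r_h^i}, and that the $\bff$ contribution gives exactly $\dual{\bff_h-\bff,\bphi}$ together with the $-\bar\ptl\bff_h^1$ inside $\bPsi_0$. I would start by writing the discrete equation at the midpoint and comparing coefficients of $1$, $(t-t_{1/2})$ and $(t-t_1)^2$.

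\emph{Energy estimate.} With the error equation in hand, rewrite it in terms of $\errW$ and test with $\bphi=\ptl_t\errW$, then proceed line by line as in the proof of Theorem~\ref{THM: main theorem}:
\begin{itemize}
\item drop $\dual{\bm\times\ptl_t\errW,\ptl_t\errW}=0$;
\item write $a(\errw,\cdot)=a(\errW,\cdot)+\dual{-\Delta_h(\bw-\bW),\cdot}$;
\item bound $b^{\bm}(\errlam,\ptl_t\errW)=b^{\bm}(\errlam,(\bI-\bP(\bm))\ptl_t\bW)$ by reusing the error equation;
\item use Lemma~\ref{LEM: Lipschitz bound} to pass from $\bP(\bm)$ to $\bP(\bW)$;
\item apply Young's inequality.
\end{itemize}
This gives a differential inequality for $\norm{\errW}_{\bHk{1}}^2$ whose forcing involves $\norm{\ptl_t(\bW-\bM_h)}^2$, $\norm{\bW-\bm_h}^2$, $\norm{\br_h^0}^2$, $\norm{\Delta_h(\bw-\bW)}^2$ and $\norm{(\bI-\bP(\bW))\ptl_t\bW}_{\bHk{1}}^2$. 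Integrating over $(0,t)$ for $t\le t_1$ and applying Gronwall yields
\begin{align*}
  \sup_{t\in[0,t_1]}\norm{\errW(t)}_{\bHk{1}}^2 \lesssim \norm{\errW(0)}_{\bHk{1}}^2 + \int_0^{t_1}(\cdots)\dt .
\end{align*}

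\emph{Initial value and conclusion.} At $t=0$ we have $\bW(0)=\bw(0)=\bRR\bm_h^0$, and $\bm_h^0=\bP_0^0\bm(0)$. Hence
\begin{align*}
  \norm{\grad\errW(0)} \le \norm{\grad(\bRR-\bI)\bm_h^0} + \norm{\grad(\bP_0^0\bm(0)-\bm(0))}.
\end{align*}
The first term is an elliptic reconstruction error. I would bound it by $\eta(\bm_h^0;\Hsym^1)$, which is part of $\Lambda_1$, and include it among the reconstruction contributions, i.e. absorb it in the bound of $\grad(\bW-\bM_h)$ at $t=0$. The $\Lsym^2$ part of $\errW(0)$ is handled the same way. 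Finally, the triangle inequality
\begin{align*}
  \norm{\grad(\bm-\bm_h)} \le \norm{\grad(\bM_h-\bm_h)} + \norm{\grad(\bW-\bM_h)} + \norm{\grad\errW}
\end{align*}
together with taking the supremum over $I_1$ gives the stated bound.
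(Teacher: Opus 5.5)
Your proposal follows the paper's proof essentially step by step. It uses the same pointwise rewriting of the trapezoidal step with $\bLproj{1}$ and $\Delta_h^1$, and the same half-step expansions, including the product term $-\tfrac{\tau^2}{4}\bar\ptl\hat\bm_h^1\bar\ptl\lambda_h^1$, to reach the error equation with residual $\br_h^0$. It then runs the energy/Gronwall argument of Theorem~\ref{THM: main theorem} and makes the same initial-value split $\norm{\grad\errW(0)}\le\norm{\grad(\bW-\bM_h)(0)}+\norm{\grad(\bP_0^0\bm(0)-\bm(0))}$, based on $\bW(0)=\bRR\bm_h^0$ and $\bM_h(0)=\bm_h^0$. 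Your remark on the $\Lsym^2$ part of $\errW(0)$ is, if anything, more explicit than the paper, which leaves that part implicit.
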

\begin{proof}
    Since $\bV_h^1 = \bV_h^0$, we have $\bar \ptl \bm_h^1 = \bP_0^1 \bar \ptl \bm_h^1 = \bar \ptl_1 \bm_h^1$. 
    Similarly to \eqref{EQ:pointwise LLG}, using the orthogonality of the $\Lsym^2$ projection $ \bLproj{1}$ onto $\bV_h^1 = \bV_h^0$,
    the definition of the discrete Laplacian \eqref{EQ: discrete Laplacian def} and the weak formulation of the trapezoidal scheme \eqref{EQ: init trapezoidal m1} we obtain 
    \begin{equation}
    \label{EQ:pointwise LLG I_0}
    \begin{aligned}
        &\dual{\alpha\bar \ptl \bm_h^1 + \bP_0^1(\hat \bm_h^{1/2} \times \bar \ptl \bm_h^1) - \Delta_h^1 \bm_h^{1/2} + \bP_0^1(\hat \bm_h^{1/2} \lambda_h^{1/2}) - \bff_h^{1/2}, \bphi}
        \\&\qq= 
        \dual{\alpha\bar \ptl \bm_h^1 + \hat \bm_h^{1/2} \times \bar \ptl \bm_h^1 - \Delta_h^1 \bm_h^{1/2} + \hat \bm_h^{1/2} \lambda_h^{1/2} - \bff_h^{1/2}, \bLproj{1} \bphi} 
        \\&\qq= 
         \dual{\alpha\bar \ptl \bm_h^1 + \hat \bm_h^{1/2} \times \bar \ptl \bm_h^1 - \bff_h^{1/2}, \bLproj{1} \bphi} + a(\bm_h^{1/2}, \bLproj{1} \bphi) + b^{\hat{\bm}_h^{1/2}}(\lambda_h^{1/2}, \bLproj{1} \bphi)
        = 0
    \end{aligned}
    \end{equation}
    for all $\bphi \in \Lsym^2(0, t_1; \bV)$.
    Thus, by applying the weak formulation \eqref{EQ: linearized LLG analytical SPP} and the definition of the error variables \eqref{EQ: Error variables}, we have for $\bphi \in \Lsym^2(0, t_1; \bV)$
    \begin{align*}
            &{\alpha}\dual{\ptl_t \errM, \bphi} + \dual{\bm \times \ptl_t \errM, \bphi} + \dual{\errm \times \ptl_t \bM_h, \bphi} + a(\errw, \bphi) + b^{\bm}(\errlam, \bphi) + b^{\errm}(\lambda_h, \bphi)
        \\ & \qq=
            - \dual{\bff, \bphi} + {\alpha} \dual{\ptl_t \bM_h, \bphi} + \dual{\bm_h \times \ptl_t \bM_h, \bphi} + a(\bw, \bphi) + b^{\bm_h}(\lambda_h, \bphi).
    \end{align*}
    Subtracting equation \eqref{EQ:pointwise LLG I_0} and adding $\dual{\hat \bm_h^{1/2} \times \bar \ptl \bm_h^1 - \hat \bm_h^{1/2} \times \bar \ptl \bm_h^1, \bphi} = 0$, as well as $\dual{\hat \bm_h^{1/2} \lambda_h^{1/2} - \hat \bm_h^{1/2} \lambda_h^{1/2}, \bphi} = 0$, yields 
    \begin{equation}
    \label{EQ: par err eq init 1}
    \begin{aligned}
        &{\alpha}\dual{\ptl_t \errM, \bphi} + \dual{\bm \times \ptl_t \errM, \bphi} + \dual{\errm \times \ptl_t \bM_h, \bphi} + a(\errw, \bphi) + b^{\bm}(\errlam, \bphi) + b^{\errm}(\lambda_h, \bphi)
        \\& \qq=
            \dual{\bff_h^{1/2} - \bff, \bphi} 
            + \alpha \dual{\ptl_t \bM_h - \bar \ptl \bm_h^1, \bphi}
            + \dual{\bm_h \times \ptl_t \bM_h - \hat \bm_h^{1/2} \times \bar \ptl \bm_h^1, \bphi} 
            \\ &\qqq
            a(\bw, \bphi) + \dual{ \Delta_h^1 \bm_h^{1/2}, \bphi}
            + \dual{\bm_h \lambda_h - \hat \bm_h^{1/2} \lambda_h^{1/2}, \bphi}
            \\ &\qqq
            + \dual{(\bI - \bP_0^1) (\hat \bm_h^{1/2} \times \bar \ptl \bm_h^1), \bphi} + \dual{(\bI - \bP_0^1)(\hat \bm_h^{1/2} \lambda_h^{1/2}), \bphi}.
    \end{aligned}
    \end{equation}
    For the second term on the right-hand side, we use the definition of the three-point reconstruction $\bM_h$ on $I_1$ \eqref{EQ: initial tp-reconstruction} to obtain
    \begin{align}
        \label{EQ: ell err eq rewrite init 1.1}
        \alpha \dual{\ptl_t \bM_h - \bar \ptl \bm_h^1, \bphi}
        = \alpha (t - t_{1/2}) \dual{\bar \ptl^2 \bm_h^2, \bphi}.
    \end{align}
    Further, we have, by the definition of $\bw$ \eqref{EQ: elliptic temp reconstruction}, as well as the definition of the elliptic reconstruction \eqref{EQ: elliptic reconstruction}
    \begin{equation}\label{EQ: a(bw,phi) = ... init}
    \begin{aligned}
        a(\bw, \bphi) 
        &= a(\bRR \bm_h^1, \bphi) + (t - t_1) \, a(\bar \ptl \bRR \bm_h^1, \bphi)
        \\&= - \dual{\Delta_h^1 \bm_h^1, \bphi} - (t - t_1) \dual{ \bar \ptl (\Delta_h^1 \bm_h^1), \bphi}.
    \end{aligned}   
    \end{equation}
    Substituting \eqref{EQ: ell err eq rewrite init 1.1} and \eqref{EQ: a(bw,phi) = ... init} into the right-hand side of \eqref{EQ: par err eq init 1} leads to
    \begin{equation}
    \label{EQ: par err eq init 2}
    \begin{aligned}
        &\alpha\dual{\ptl_t \errM, \bphi} + \dual{\bm \times \ptl_t \errM, \bphi} + \dual{\errm \times \ptl_t \bM_h, \bphi} + a(\errw, \bphi) + b^{\bm}(\errlam, \bphi) + b^{\errm}(\lambda_h, \bphi)
        \\& \qq=
            \dual{\bff_h^{1/2} - \bff, \bphi} + \dual{\bm_h \times \ptl_t \bM_h - \hat \bm_h^{1/2} \times \bar \ptl \bm_h^1, \bphi} 
            + \dual{\Delta_h^1 \bm_h^{1/2} - \Delta_h^1 \bm_h^1, \bphi}
            \\ &\qqq
            + \dual{\bm_h \lambda_h - \hat \bm_h^{1/2} \lambda_h^{1/2}, \bphi}
            + \alpha(t - t_{1/2}) \dual{\bar \ptl^2 \bm_h^2, \bphi} - (t - t_1) \dual{\bar \ptl (\Delta_h^1 \bm_h^1), \bphi}
            \\ &\qqq
            + \dual{(\bI - \bP_0^1) (\hat \bm_h^{1/2} \times \bar \ptl \bm_h^1), \bphi} + \dual{(\bI - \bP_0^1)(\hat \bm_h^{1/2} \lambda_h^{1/2}), \bphi}.
    \end{aligned}
    \end{equation}
    
    We continue with rewriting the right-hand side of \eqref{EQ: par err eq init 2}.
    Since $t - t_{1/2} = \frac{\tau}2 + t-t_1$, we have 
    \begin{align}
        \label{EQ: halfstep rewrite}
        \bff_h^1 - \bff_h^{1/2} = \frac{\tau}{2} \bar \ptl \bff_h^1 = (t-t_{1/2}) \bar \ptl \bff_h^1 - (t-t_1) \bar \ptl \bff_h^1
    \end{align}
    The same identity holds for $\hat \bm_h^1$, $\lambda_h^1$ and $\Delta_h^1 \bm_h^1$.
    Inserting \eqref{EQ: halfstep rewrite} for $\bff_h^1$ and $\Delta_h^1 \bm_h^1$ into the right-hand side of \eqref{EQ: par err eq init 2} yields
    \begin{equation}
    \label{EQ: par err eq init 3}
    \begin{aligned}
        &{\alpha}\dual{\ptl_t \errM, \bphi} + \dual{\bm \times \ptl_t \errM, \bphi} + \dual{\errm \times \ptl_t \bM_h, \bphi} + a(\errw, \bphi) + b^{\bm}(\errlam, \bphi) + b^{\errm}(\lambda_h, \bphi)
        \\& \qq=
            \dual{\bff_h - \bff, \bphi} + \dual{\bm_h \times \ptl_t \bM_h - \hat \bm_h^{1/2} \times \bar \ptl \bm_h^1, \bphi} 
            \\ &\qqq
            + \dual{\bm_h \lambda_h - \hat \bm_h^{1/2} \lambda_h^{1/2}, \bphi}
            + (t - t_{1/2}) \dual{\alpha \bar \ptl^2 \bm_h^2 - \bar \ptl (\Delta_h^1 \bm_h^1) - \bar \ptl \bff_h^1, \bphi} 
            \\ &\qqq
            + \dual{(\bI - \bP_0^1) (\hat \bm_h^{1/2} \times \bar \ptl \bm_h^1), \bphi} + \dual{(\bI - \bP_0^1)(\hat \bm_h^{1/2} \lambda_h^{1/2}), \bphi}.
    \end{aligned}
    \end{equation}
    By adding and subtracting $\dual{\hat \bm_h^1 \times \ptl_t \bM_h, \bphi}$ to the second term on the right-hand side of \eqref{EQ: par err eq init 3}, the definition of the three-point reconstruction $\bM_h$ on $I_1$ \eqref{EQ: initial tp-reconstruction} and \eqref{EQ: halfstep rewrite}, we obtain
    \begin{equation}
    \label{EQ: par err eq init rewrite 3.1}
    \begin{aligned}
        &\dual{\bm_h \times \ptl_t \bM_h - \hat \bm_h^{1/2} \times \bar \ptl \bm_h^1, \bphi}
        \\&\qq= 
        \dual{(\bm_h^1 - \hat \bm_h^1) \times \ptl_t \bM_h, \bphi} + (t - t_1) \dual{\bar \ptl \bm_h^1 \times \ptl_t \bM_h, \bphi}
        + \dual{(\hat \bm_h^1 - \hat \bm_h^{1/2}) \times \bar \ptl \bm_h^1, \bphi} 
        \\&\qqq
        + (t-t_{1/2}) \dual{\hat \bm_h^1 \times \bar \ptl^2 \bm_h^2, \bphi} 
        \\&\qq= 
        \dual{(\bm_h^1 - \hat \bm_h^1) \times \ptl_t \bM_h, \bphi}
        + (t-t_1) \dual{(\bar \ptl \bm_h^1 - \bar \ptl \hat \bm_h^1) \times \bar \ptl \bm_h^1, \bphi}
        \\&\qqq
        + (t-t_1)(t-t_{1/2}) \dual{\bar \ptl \bm_h^1 \times \bar \ptl^2 \bm_h^2, \bphi} 
        + (t - t_{1/2}) \dual{\bar \ptl \hat \bm_h^1 \times \bar \ptl \bm_h^1 + \hat \bm_h^1 \times \bar \ptl^2 \bm_h^2, \bphi}.
    \end{aligned}
    \end{equation}
    Similarly, adding and subtracting first $\dual{\hat \bm_h \lambda_h^1, \bphi}$ and second $(t - t_1) \dual{\hat \bm_h^1 \bar \ptl \lambda_h^1, \bphi}$ to the third term of the right-hand side \eqref{EQ: par err eq init 3} yields
    \begin{align*}
        &\dual{\bm_h \lambda_h - \hat \bm_h^{1/2} \lambda_h^{1/2}, \bphi}
        \\ &\qqq= 
        \dual{(\bm_h - \hat \bm_h) \lambda_h^1, \bphi}
        + (t - t_1) \dual{\bm_h \bar \ptl \lambda_h^1, \bphi}
        + \dual{\hat \bm_h \lambda_h^1 - \hat \bm_h^{1/2} \lambda_h^{1/2}, \bphi}
        \\&\qqq= 
        \dual{(\bm_h - \hat \bm_h) \lambda_h^1, \bphi}
        + (t - t_1) \dual{(\bm_h^1 - \hat \bm_h^1) \bar \ptl \lambda_h^1, \bphi}
        + (t - t_1)^2\dual{\bar \ptl \bm_h^1 \bar \ptl \lambda_h^1, \bphi}
        \\&\qqq\qq\hspace{-0.4pt}
        + (t - t_1) \dual{\hat \bm_h^1 \bar \ptl \lambda_h^1 + \bar \ptl \hat \bm_h^1 \lambda_h^1, \bphi}
        + \dual{\hat \bm_h^1 \lambda_h^1 - \hat \bm_h^{1/2} \lambda_h^{1/2}, \bphi}.
    \end{align*}
    Moreover, by \eqref{EQ: halfstep rewrite}, adding and subtracting first $\dual{\hat \bm_h^{1} \lambda_h^{1/2}, \bphi}$ and second $\frac{\tau}{2} \dual{\bar \ptl \hat \bm_h^{1} \lambda_h^{1}, \bphi}$, we obtain
    \begin{align*}
        \dual{\hat \bm_h^1 \lambda_h^1 - \hat \bm_h^{1/2} \lambda_h^{1/2}, \bphi}
        &= 
        \frac{\tau}{2} \dual{\hat \bm_h^1 \bar \ptl \lambda_h^{1}, \bphi}
        + \frac{\tau}{2} \dual{\bar \ptl \hat \bm_h^1 \lambda_h^{1/2}, \bphi}
        \\&=
        \frac{\tau}{2} \dual{\hat \bm_h^1 \bar \ptl \lambda_h^{1} + \bar \ptl \hat \bm_h^1  \lambda_h^{1} , \bphi}
        - \frac{\tau^2}{4} \dual{\bar \ptl \hat \bm_h^1 \bar \ptl \lambda_h^1, \bphi}.
    \end{align*}
    Combining the above equations and using $t - t_1 + \frac{\tau}{2} = t - t_{1/2}$ yields
    \begin{equation}
    \label{EQ: par err eq init rewrite 3.2}
    \begin{aligned}
        \dual{\bm_h \lambda_h - \hat \bm_h^{1/2} \lambda_h^{1/2}, \bphi}
        &= 
        \dual{(\bm_h - \hat \bm_h) \lambda_h^1, \bphi}
        + (t - t_1) \dual{(\bm_h^1 - \hat \bm_h^1) \bar \ptl \lambda_h^1, \bphi} 
        - \frac{\tau^2}{4} \dual{\bar \ptl \hat \bm_h^1 \bar \ptl \lambda_h^1, \bphi}
        \\&\qq\hspace{-0.4pt}
        + (t - t_{1/2}) \dual{\hat \bm_h^1 \bar \ptl \lambda_h^1 + \bar \ptl \hat \bm_h^1 \lambda_h^1, \bphi}
        + (t - t_1)^2\dual{\bar \ptl \bm_h^1 \bar \ptl \lambda_h^1, \bphi}
    \end{aligned}
    \end{equation}
    All in all, we conclude by inserting \eqref{EQ: par err eq init rewrite 3.1} and \eqref{EQ: par err eq init rewrite 3.2} into the right-hand side of \eqref{EQ: par err eq init 3}
    \begin{equation}
    \label{EQ: parabolic err. eq. init}
    \begin{split}
            &{\alpha}\dual{\ptl_t \errM, \bphi} + \dual{\bm \times \ptl_t \errM, \bphi} + \dual{\errm \times \ptl_t \bM_h, \bphi} + a(\errw, \bphi) + b^{\bm}(\errlam, \bphi) + b^{\errm}(\lambda_h, \bphi)
        \\& \qq=
            \dual{\bff_h - \bff, \bphi} + \dual{(\bm_h^1 - \hat \bm_h^1) \times \ptl_t \bM_h, \bphi}
            + (t-t_1) \dual{(\bar \ptl \bm_h^1 - \bar \ptl \hat \bm_h^1) \times \bar \ptl \bm_h^1, \bphi}
            \\&\qqq
            + (t-t_1)(t-t_{1/2}) \dual{\bar \ptl \bm_h^1 \times \bar \ptl^2 \bm_h^2, \bphi} 
            + \dual{(\bm_h - \hat \bm_h) \lambda_h^1, \bphi}
            \\ &\qqq
            + (t - t_1) \dual{(\bm_h^1 - \hat \bm_h^1) \bar \ptl \lambda_h^1, \bphi}
            - \frac{\tau^2}{4} \dual{\bar \ptl \hat \bm_h^1 \bar \ptl \lambda_h^1, \bphi}
            + (t - t_1)^2 \dual{\bar \ptl \bm_h^1 \bar \ptl \lambda_h^1, \bphi}
            \\ &\qqq
            + \dual{(\bI - \bP_0^1) (\hat \bm_h^{1/2} \times \bar \ptl \bm_h^1), \bphi} + \dual{(\bI - \bP_0^1)(\hat \bm_h^{1/2} \lambda_h^{1/2}), \bphi}
            \\ &\qqq
            + (t - t_{1/2}) \dual{\bPsi_0, \bphi}
        \\&\qq
            = \dual{\br_h^0, \bphi}
    \end{split}
    \end{equation}
    with $\bPsi_0 = \alpha \bar \ptl^2 \bm_h^2 + \bar \ptl \hat \bm_h^1 \times \bar \ptl \bm_h^1 + \hat \bm_h^1 \times \bar \ptl^2 \bm_h^2 - \bar \ptl (\Delta_h^1 \bm_h^1) + \hat \bm_h^1 \bar \ptl \lambda_h^1 + \bar \ptl \hat \bm_h^1 \lambda_h^1 - \bar \ptl \bff_h^1$.
    Following the proof of Theorem~\ref{THM: main theorem} and estimating
    \begin{align*}
        \norm{\grad (\bW - \bm)(0)} \le \norm{\grad (\bW - \bM_h)(0)} + \norm{\grad( \bP_0^0 \bm(0) - \bm(0))}
    \end{align*}
    concludes the proof.
\end{proof}

In the following Lemma, we estimate the difference between the linear reconstruction $\bm_h$ and the three-point reconstruction $\bM_h$.

\begin{Lemma}
    \label{LEM: Mh - mh}
    Let $\bm_h^n$ be the discrete solution of \eqref{EQ: discrete LLG} and $\bM_h$ be the corresponding three-point reconstruction defined in \eqref{EQ:def-Mh_uni} - \eqref{EQ: initial tp-reconstruction}. Then, for $t \in [0, t_N]$ it holds
    \begin{align*}
        \norm{\grad (\bM_h - \bm_h)(t)}^2 
        \lesssim 
        \tau^4 \max\limits_{2 \le n \le N} \norm{\grad \bar \ptl^2_n \bm_h^n}^2 + \tau^4 \max\limits_{2 \le n \le N} \norm{\grad (\bar \ptl^2 - \bar \ptl^2_n) \bm_h^n}^2 = \tau^4 \EE_1 + \Xi_2,
    \end{align*}
    where the estimators are defined in Definition~\ref{DEF: error est.}.
\end{Lemma}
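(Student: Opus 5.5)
The proof is a pointwise-in-time computation that uses only the explicit form of the three-point reconstruction.

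By \eqref{EQ:def-Mh_uni}, for $t \in I_n$ with $n \ge 2$ we have
\begin{align*}
    (\bM_h - \bm_h)(t) = \tfrac12 (t-t_n)(t-t_{n-1})\, \bar \ptl^2 \bm_h^n .
\end{align*}
On $I_1$, \eqref{EQ: initial tp-reconstruction} gives the same expression with $\bar \ptl^2 \bm_h^2$ in place of $\bar\ptl^2 \bm_h^n$. In both cases the scalar weight is bounded independently of the step: for $t \in I_n$,
\begin{align*}
    |(t-t_n)(t-t_{n-1})| \le \tfrac{\tau^2}{4},
\end{align*}
since the quadratic attains its extreme value at the midpoint. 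Taking the gradient, which acts only on the spatial factor, gives
\begin{align*}
    \norm{\grad (\bM_h - \bm_h)(t)} \le \tfrac{\tau^2}{8} \norm{\grad \bar \ptl^2 \bm_h^n}
\end{align*}
for $t \in I_n$ with $n \ge 2$. The initial interval gives the same bound with $n=2$. On the interval the right-hand side must use the appropriate index, and it is finite: each $\bm_h^k$ lies in $\bV$, so $\bar\ptl^2\bm_h^n \in \bV$ even when the meshes change.

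Next, I would split the second difference into its projected part and its non-conforming remainder:
\begin{align*}
    \bar \ptl^2 \bm_h^n = \bar \ptl^2_n \bm_h^n + (\bar \ptl^2 - \bar \ptl^2_n)\bm_h^n .
\end{align*}
The triangle inequality together with $(a+b)^2 \le 2(a^2+b^2)$ then gives
\begin{align*}
    \norm{\grad (\bM_h - \bm_h)(t)}^2 \le \tfrac{\tau^4}{32}\Big( \norm{\grad \bar \ptl^2_n \bm_h^n}^2 + \norm{\grad (\bar \ptl^2 - \bar \ptl^2_n) \bm_h^n}^2 \Big).
\end{align*}
Taking the maximum over $2 \le n \le N$ covers every $t \in [0,t_N]$, since $I_1$ uses $n=2$. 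This yields exactly $\tau^4\EE_1 + \Xi_2$. For $t = t_0 = 0$ the weight vanishes, so that point is trivial.

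I do not expect a genuine obstacle. The only points needing care are the changing meshes and the convention on $I_1$. Because the finite element spaces vary, $\bar\ptl^2 \bm_h^n$ is not in $\bV_h^n$, and this is why the split into $\EE_1$ and $\Xi_2$ is needed rather than a single term. On $I_1$ one must check that the reconstruction uses $\bar\ptl^2\bm_h^2$, which is covered by the maximum's $n=2$ term. Both issues are handled by the decomposition above.
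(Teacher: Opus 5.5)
Your proposal is correct and matches the paper's proof. Both write $\bM_h - \bm_h$ as $\tfrac12(t-t_n)(t-t_{n-1})\,\bar\ptl^2\bm_h^n$ (with $n=2$ on $I_1$), bound the weight by $\tau^2$, and split $\bar\ptl^2\bm_h^n = \bar\ptl^2_n\bm_h^n + (\bar\ptl^2-\bar\ptl^2_n)\bm_h^n$ by the triangle inequality; your explicit constant $\tau^4/32$ simply sharpens the paper's $\lesssim$.
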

\begin{proof}
    The proof follows immediately by estimating the difference of the reconstruction \eqref{EQ:def-Mh_uni} and \eqref{EQ: initial tp-reconstruction} using the triangle inequality.
    By definition, we have
    \begin{equation}
    \begin{aligned}
        \label{EQ: bM_h - bm_h}
        \bM_h(t) - \bm_h(t) &= \frac12 (t - t_n) (t - t_{n-1}) \bar \ptl^2 \bm_h^n, &&t \in I_n, n \ge 2,
        \\
        \bM_h(t) - \bm_h(t) &= \frac12 (t - t_0) (t - t_1) \bar \ptl^2 \bm_h^2, &&t \in I_1.
    \end{aligned}
    \end{equation}
    Thus, by the triangle inequality and \eqref{EQ: bM_h - bm_h}, we obtain
    \begin{align*}
        \norm{\grad (\bM_h - \bm_h)(t)}^2 
        \lesssim \tau^4 \norm{\grad \bar \ptl^2 \bm_h^n}^2
        \le 
        \tau^4 \norm{\grad \bar \ptl^2_n \bm_h^n}^2 + \tau^4 \norm{\grad (\bar \ptl^2 - \bar \ptl^2_n) \bm_h^n}^2
    \end{align*}
    for $t \in I_n$, $n \ge 2$ and $\norm{\grad (\bM_h - \bm_h)(t)}^2 \lesssim \tau^4 \norm{\grad \bar \ptl^2 \bm_h^2}^2$ for $t \in I_1$.
\end{proof}

Next, we derive error bounds for the error between the time-space reconstruction $\bW$ and the temporal reconstructions, which result in the space error estimators.

\begin{Lemma}
    \label{LEM: apost dt (W-M)}
    Let $\bm_h^n$ be the solution of \eqref{EQ: discrete LLG}, $\bM_h$ be the corresponding three-point reconstructions defined in \eqref{EQ:def-Mh_uni} - \eqref{EQ: initial tp-reconstruction} and $\bW$ be the three-point time-space reconstruction defined in \eqref{EQ: three-point time-space reconstruction}. 
    Then, for $t \in [0, t_N]$ it holds
    \begin{align}
        \norm{\grad (\bW - \bM_h)(t)}^2 &\lesssim 
        \Lambda_1, \label{EQ: apodt W-M 1.}
        \\
        \int_{0}^{t_N} \norm{\ptl_t (\bW - \bM_h)}_{\bHk{1}}^2 \dt 
        & \lesssim \Lambda_2 + \Xi_1, \label{EQ: apodt W-M 2.}
        \\
        \int_{0}^{t_N} \norm{(\bW - \bm_h)}^2 \dt 
        &\lesssim \Lambda_3 + \EE_6, \label{EQ: apodt W-M 3.}
    \end{align}
    where the estimators are defined in Definition~\ref{DEF: error est.}.
\end{Lemma}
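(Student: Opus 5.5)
The plan is to write $\bW-\bM_h$ explicitly on each interval $I_n$ and use that the elliptic reconstruction $\bRR$ is linear. Then every difference reduces to $(\bI-\bRR)$ applied to a discrete quantity, which is controlled by the elliptic estimators \eqref{EQ: ell. rec. a post}.

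For $t\in I_n$, $n\ge2$, the definitions \eqref{EQ:def-Mh_uni} and \eqref{EQ: three-point time-space reconstruction} give
\begin{align*}
    (\bW-\bM_h)(t) = (\bRR-\bI)\bm_h^n + (t-t_n)(\bRR-\bI)\bar\ptl\bm_h^n + \tfrac12(t-t_n)(t-t_{n-1})(\bRR-\bI)\bar\ptl^2\bm_h^n .
\end{align*}
Here $\bRR$ is applied to a function of $\bV_h^n$ only at the level of $\bm_h^n$; the backward differences are understood as differences of $\bRR\bm_h^k$ with their respective meshes. On $I_1$ the same formula holds with $\bar\ptl^2\bm_h^2$, and there $\bV_h^0=\bV_h^1=\bV_h^2$.

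\emph{Bound \eqref{EQ: apodt W-M 1.}.} I avoid expanding the differences. Since $\bW$ and $\bM_h$ are both obtained from their nodal values by the same interpolation (the three-point interpolation through $t_{n-2},t_{n-1},t_n$), $(\bW-\bM_h)(t)$ is a combination of the nodal values $(\bRR-\bI)\bm_h^k$, $k\in\{n-2,n-1,n\}$. The coefficients are the Lagrange basis polynomials evaluated on $I_n$, and for uniform steps these are bounded by an absolute constant. The triangle inequality then gives $\norm{\grad(\bW-\bM_h)(t)}^2\lesssim\max_k\eta(\bm_h^k;\Hsym^1)=\Lambda_1$.

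\emph{Bound \eqref{EQ: apodt W-M 2.}.} Differentiating in time gives
\begin{align*}
    \ptl_t(\bW-\bM_h)=(\bRR-\bI)\bar\ptl\bm_h^n+(t-t_n+\tfrac{\tau}{2})\,\big[(\bRR-\bI)\bm_h\big]\text{'s second difference}.
\end{align*}
For the first term I split $\bar\ptl\bm_h^n=\bar\ptl_n\bm_h^n+(\bar\ptl-\bar\ptl_n)\bm_h^n$. The first part lies in $\bV_h^n$, so \eqref{EQ: ell. rec. a post} bounds it by $\eta(\bar\ptl_n\bm_h^n;\Hsym^1)$. The second part is $(\bI-\bRR)(\bar\ptl-\bar\ptl_n)\bm_h^n$, which is exactly the changing-mesh quantity. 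Integrating over $I_n$ and summing produces $\Lambda_2$ and the $\Xi_1$-type term; $\Xi_1$ is stated in $\Lsym^2$, so I would read it in the $\bHk{1}$ sense required here.

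The second-difference term has the form $\tau\cdot\frac1\tau[(\bRR-\bI)\bar\ptl\bm_h]^n_{n-1}$. Its integral is $\lesssim\tau\big(\norm{(\bRR-\bI)\bar\ptl\bm_h^n}_{\bHk{1}}^2+\norm{(\bRR-\bI)\bar\ptl\bm_h^{n-1}}_{\bHk{1}}^2\big)$, which is absorbed into the same sums.

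\emph{Bound \eqref{EQ: apodt W-M 3.}.} I write
\begin{align*}
    \bW-\bm_h=(\bw-\bm_h)+\tfrac12(t-t_n)(t-t_{n-1})\bar\ptl^2\bRR\bm_h^n .
\end{align*}
The first part is a convex combination of $(\bRR-\bI)\bm_h^n$ and $(\bRR-\bI)\bm_h^{n-1}$, so its integral is bounded by $\Lambda_3$ via the $\Lsym^2$-estimator. For the second part I use $\bar\ptl^2\bRR\bm_h^n=\bar\ptl^2\bm_h^n+\bar\ptl^2(\bRR-\bI)\bm_h^n$. The first piece gives $\tau^4\cdot\tau\norm{\bar\ptl^2\bm_h^n}^2$, summing to at most $\tau^4\EE_6$ (the statement absorbs the $\tau^4$, or equivalently it only needs $\EE_6$ with that power). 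The second piece is $\tau^2\cdot\tau^{-2}$ times nodal errors, again controlled by $\Lambda_3$.

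The main obstacle is the changing-mesh bookkeeping in \eqref{EQ: apodt W-M 2.}: $\bar\ptl\bm_h^n\notin\bV_h^n$, so \eqref{EQ: ell. rec. a post} cannot be applied directly. The resolution is the projection split above, which is why $\Xi_1$ appears. I would first carry out all steps with $\bV_h^n$ fixed, where $\Xi_1=0$, and then insert the projection.
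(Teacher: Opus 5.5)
Your proposal is correct and essentially reproduces the paper's argument. You write $\bW-\bM_h$ on $I_n$ as a bounded combination of the nodal defects $(\bRR-\bI)\bm_h^k$ (and, for the time derivative, of $(\bRR-\bI)\bar\ptl\bm_h^n$ and $(\bRR-\bI)\bar\ptl\bm_h^{n-1}$), split $\bar\ptl\bm_h^n=\bar\ptl_n\bm_h^n+(\bar\ptl-\bar\ptl_n)\bm_h^n$ before invoking the elliptic estimators, and for \eqref{EQ: apodt W-M 3.} peel off $\tfrac12(t-t_n)(t-t_{n-1})\bar\ptl^2\bm_h^n=\bM_h-\bm_h$. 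The two bookkeeping points you flag, namely that $\Xi_1$ is effectively needed in the $\bHk{1}$-norm and that the $\EE_6$-contribution carries the factor $\tau^4$ (as it does in Theorem~\ref{THM: main theorem}), occur in exactly the same form in the paper's own proof.
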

\begin{proof}
    Recall that $\ptl_t \bM_h(t) = \bar \ptl^B \bm_h^n + (t- t_n) \bar \ptl^2 \bm_h^n$ and $\ptl_t \bW(t) = \bar \ptl^B \bRR \bm_h^n + (t- t_n) \bar \ptl^2 \bRR \bm_h^n$ for $t \in I_n$ and $n \ge 2$, as well as $\bar \ptl^B (\cdot) = \frac{\tau}{2} \bar \ptl^2 (\cdot) + \bar \ptl(\cdot)$.
    Using the definition of the three-point reconstruction \eqref{EQ:def-Mh_uni} for $t\in I_n$, $n \ge 2$, along with the discrete differentials given in \eqref{EQ: Def ptl bar k} and \eqref{EQ: Def ptlB}, we obtain
    \begin{equation}
    \label{EQ: W-M In}
    \begin{aligned}
        \ptl_t (\bW - \bM_h) &= \bar \ptl^B (\bm_h^n - \bRR \bm_h^n) + (t-t_n) \bar \ptl^2 (\bm_h^n - \bRR \bm_h^n)
        \\ &= \bigg(t-t_n + \frac{\tau}{2}\bigg) \bar \ptl^2 (\bm_h^n - \bRR \bm_h^n) + \bar \ptl (\bm_h^n - \bRR \bm_h^n)
        \\ &= \bigg(\frac{t-t_n}{\tau} + \frac{3}{2}\bigg) \bar \ptl (\bm_h^n - \bRR \bm_h^n)
        - \bigg(\frac{t-t_n}{\tau} + \frac{1}{2}\bigg) \bar \ptl (\bm_h^{n-1} - \bRR \bm_h^{n-1}).
    \end{aligned}
    \end{equation}
    Similarly, applying the definition of the three-point reconstruction \eqref{EQ: initial tp-reconstruction} for $t \in I_1$ and \eqref{EQ: Def ptl bar k} yields
    \begin{equation}
    \label{EQ: W-M I1}
    \begin{aligned}
        \ptl_t (\bW - \bM_h) &= \bar \ptl (\bm_h^1 - \bRR \bm_h^1) + (t-t_{1/2}) \bar \ptl^2 (\bm_h^2 - \bRR \bm_h^2)
        \\ &= \frac{t-t_{1/2}}{\tau} \, \bar \ptl (\bm_h^2 - \bRR \bm_h^2)
        + \bigg(1 - \frac{t-t_{1/2}}{\tau} \bigg) \bar \ptl (\bm_h^1 - \bRR \bm_h^1).
    \end{aligned}
    \end{equation}
    Since $|t-t_n| \le \tau$ for $t \in I_n$, $n \ge 2$, and $|t-t_{1/2}| \le \tau$ for $t \in I_1$, we obtain with \eqref{EQ: W-M In} and \eqref{EQ: W-M I1}
    \begin{align*}
        \int_0^T \norm{\ptl_t (\bW - \bM_h)}_{\bHk{1}}^2 \dt 
        &\lesssim
        \sum\limits_{n=1}^N \tau \norm{\bar \ptl (\bm_h^n - \bRR \bm_h^n)}_{\bHk{1}}^2.
    \end{align*}
    Finally, adding and subtracting $\bRR \bar \ptl_n \bm_h^n$ inside $\norm{\cdot}_{\bHk{1}}$, using $\bar \ptl \bRR \bm_h^n = \bRR \bar \ptl \bm_h^n$ and the triangle inequality leads to
    \begin{align}
        \label{EQ: ptlt W - Mh eq}
        \int_0^T \norm{\ptl_t (\bW - \bM_h)}_{\bHk{1}}^2 \dt 
        & \lesssim
        \sum\limits_{n=1}^N \tau \norm{\bar \ptl_n \bm_h^n - \bRR \bar \ptl_n \bm_h^n}_{\bHk{1}}^2
        + \tau \norm{(\bI - \bRR)(\bar \ptl - \bar \ptl_n) \bm_h^n}_{\bHk{1}}^2.
    \end{align}
    Estimating $\norm{\bar \ptl_n \bm_h^n - \bRR \bar \ptl_n \bm_h^n}_{\bHk{1}}^2\lesssim \eta(\bar \ptl_n \bm_h^n; \Hsym^1)$ 
    via \eqref{EQ: ell. rec. a post} concludes \eqref{EQ: apodt W-M 2.}.

    Let $\star \in \{\bLsym^2, \bHk{1}\}$ and $\norm{\cdot}_\star$ denote the norm under consideration.
    Using the definition of the reconstructions, we have
    \begin{align}
        \label{EQ: bW - bM_h}
        \norm{(\bW - \bM_h)(t)}_\star &\lesssim \norm{\bRR \bm_h^n - \bm_h^n}_\star + \norm{\bRR \bm_h^{n-1} - \bm_h^{n-1}}_\star + \norm{\bRR \bm_h^{n-2} - \bm_h^{n-2}}_\star
    \end{align}
    for $t \in I_n$, $n \ge 2$, and 
    \begin{align*}
        \norm{(\bW - \bM_h)(t)}_\star &\lesssim \norm{\bRR \bm_h^2 - \bm_h^2}_\star + \norm{\bRR \bm_h^{1} - \bm_h^{1}}_\star + \norm{\bRR \bm_h^{0} - \bm_h^{0}}_\star
    \end{align*}
    for $t \in I_1$.
    Combining the above equations with \eqref{EQ: bM_h - bm_h} and \eqref{EQ: ell. rec. a post}, we arrive at
    \begin{align*}
        \int_{0}^{t_N} \norm{(\bW - \bm_h)}^2 \dt 
        &\lesssim
        \int_{0}^{t_N} \big( \norm{\bW - \bM_h}^2 + \norm{\bM_h - \bm_h}^2 \big) \dt 
        \\&\lesssim
        \sum\limits_{n=0}^N \tau \eta(\bm_h^n; \Lsym^2) + \sum\limits_{n=2}^N \tau^5 \norm{\bar \ptl^2 \bm_h^n}^2.
    \end{align*}
    This establishes \eqref{EQ: apodt W-M 3.}. 
    Finally, using \eqref{EQ: bW - bM_h} and estimating $\norm{\grad (\bm_h^n - \bRR \bm_h^n)}^2 \lesssim \eta(\bm_h^n; \Hsym^1)$ with \eqref{EQ: ell. rec. a post} concludes \eqref{EQ: apodt W-M 1.}.
\end{proof}

Below, we bound the difference between the linear and quadratic time-space reconstruction similar to Lemma~\ref{LEM: Mh - mh}.

\begin{Lemma}
    \label{LEMMA: laplh w - W}
    Let $\bm_h^n$ be the solution of \eqref{EQ: discrete LLG} and $\bw$, $\bW$ be the corresponding time-space reconstructions defined in \eqref{EQ: elliptic temp reconstruction} - \eqref{EQ: three-point time-space reconstruction}. Then
    \begin{align*}
        \int_{0}^{t_N}  \norm{\Delta_h (\bw - \bW)}^2 \dt 
        &\lesssim
        \sum\limits_{n = 2}^N \tau^5 \norm{\bar \ptl^2 (- \Delta_h^n)\bm_h^n}^2
        =
        \tau^4 \EE_5 
    \end{align*}
    where the estimator is defined in Definition~\ref{DEF: error est.}.
\end{Lemma}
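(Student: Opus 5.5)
The plan is to proceed interval by interval, using the explicit form of $\bw - \bW$ from the definitions \eqref{EQ: elliptic temp reconstruction}--\eqref{EQ: three-point time-space reconstruction}, exactly as in the proof of Lemma~\ref{LEM: Mh - mh}. For $t \in I_n$, $n \ge 2$, we have
\begin{align*}
    \bw(t) - \bW(t) = -\tfrac12 (t-t_n)(t-t_{n-1})\, \bar \ptl^2 \bRR \bm_h^n ,
\end{align*}
and on $I_1$ the same expression holds with $\bar \ptl^2 \bRR \bm_h^2$ and the nodes $t_0, t_1$. Since the elliptic reconstruction is linear, we can write $\bar \ptl^2 \bRR \bm_h^n$ as a combination of $\bRR \bm_h^n$, $\bRR \bm_h^{n-1}$ and $\bRR \bm_h^{n-2}$. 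Applying $\Delta_h$ is understood as in the proof of Theorem~\ref{THM: main theorem}: the discrete Laplacian acts on each elliptic reconstruction through its defining relation \eqref{EQ: elliptic reconstruction}. Pointwise in time this gives
\begin{align*}
    -\Delta_h(\bw - \bW)(t) = -\tfrac12 (t-t_n)(t-t_{n-1})\, \bar \ptl^2 (-\Delta_h^n)\bm_h^n, \qquad t \in I_n,
\end{align*}
which is the identity already used in the main proof. I would state this identity explicitly as the first step. The object $\bar \ptl^2(-\Delta_h^n)\bm_h^n$ is the second backward difference of the sequence $\{-\Delta_h^k \bm_h^k\}_k$, each member of which lies in $\bL^2(\Omega)$, so its $\bL^2$-norm is well defined and computable.

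The second step is the bound $|(t-t_n)(t-t_{n-1})| \le \tau^2/4 \le \tau^2$ on $I_n$. Squaring, integrating over $I_n$ (of length $\tau$) and summing over $n = 2,\dots,N$ gives
\begin{align*}
    \int_{t_1}^{t_N} \norm{\Delta_h(\bw - \bW)}^2 \dt \le \sum_{n=2}^N \tfrac{\tau^5}{16}\, \norm{\bar \ptl^2 (-\Delta_h^n)\bm_h^n}^2 \lesssim \tau^4 \EE_5 .
\end{align*}

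The main obstacle is the initial interval $I_1$. There the correction term carries $\bar \ptl^2(-\Delta_h)\bm_h^2$, and by the assumption $\bV_h^0 = \bV_h^1 = \bV_h^2$ of the initial-step lemma, all the discrete Laplacians coincide. The contribution is therefore $\lesssim \tau^5 \norm{\bar \ptl^2(-\Delta_h^2)\bm_h^2}^2$. This is exactly the $n=2$ summand of $\EE_5$ again, so it is absorbed, up to a factor of two, into the same sum.

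One point needs care: $\Delta_h$ must be interpreted consistently when the meshes change. On each interval $I_n$ the relevant operator is the one defined through the sequence of discrete Laplacians $\Delta_h^k \bm_h^k$, $k = n-2, n-1, n$. It is not a single $\Delta_h^n$ applied to old iterates, and the definition of $\EE_5$ via $\bar \ptl^2$ of that sequence matches this interpretation.
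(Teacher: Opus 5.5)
Your proposal is correct and matches the paper's proof: on each $I_n$, $n\ge 2$, write $-\Delta_h(\bw-\bW)$ as the factor $\tfrac12(t-t_n)(t-t_{n-1})$ times $\bar\ptl^2(-\Delta_h^n)\bm_h^n$ (on $I_1$ with $\bar\ptl^2(-\Delta_h^2)\bm_h^2$), bound that factor by $\tau^2$, integrate over each interval, and absorb the $I_1$ contribution into the $n=2$ summand. Your sign $-\tfrac12$ in that identity is the correct one; the paper's displayed $+\tfrac12$ is a sign slip that does not matter, since only norms enter.
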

\begin{proof}
    Recall that 
    \begin{equation*}
    \begin{aligned}
        - \Delta_h (\bw - \bW) &=  \frac12 (t-t_n)(t - t_{n-1}) \bar\ptl^2 (- \Delta_h^n) \bm_h^n, &&t \in I_n, n \ge 2,
        \\
        - \Delta_h (\bw - \bW) &=  \frac12 (t-t_0)(t - t_{1}) \bar\ptl^2 (- \Delta_h^2) \bm_h^2, &&t \in I_1.
    \end{aligned}
    \end{equation*}
    Thus
    \begin{align*}
        \int_0^T \norm{\Delta_h (\bw - \bW)}^2 \dt 
        &\lesssim 
        \sum\limits_{n = 2}^N \tau^5 \norm{ \bar \ptl^2 (- \Delta_h^n) \bm_h^n}^2.
    \end{align*}
\end{proof}

Next, we compute bounds for the residuals of the parabolic error equations     \eqref{EQ:parabolic err eq reconstruction} and \eqref{EQ: parabolic err. eq. init}.

\begin{Lemma}
    \label{LEM: apost rh and rh0}
    Let $\br_h$ be defined by \eqref{EQ: residual} and $\br_h^0$ by \eqref{EQ: r_h^i}.
    Then,
    \begin{align*}
        \int_0^{t_1} \norm{\br_h^0}^2 \dt &\lesssim \FF_1 + \CC_1 + \QQ_1 + \II_2 + \II_3,
        \\
        \int_{t_1}^{t_N} \norm{\br_h}^2 \dt 
        &\lesssim 
        \FF_2 + \tau^4 (\EE_2 + \EE_3 + \EE_4) + \Xi_3 + \CC_2 + \QQ_2 + \II_1,
    \end{align*}
    where the estimators are defined in Definition~\ref{DEF: error est.}.
\end{Lemma}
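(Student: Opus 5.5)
The bound will come from estimating the residuals term by term. By duality, $\norm{\br_h(t)}$ is bounded by the sum of the $\bLsym^2$-norms of the individual contributions in \eqref{EQ: residual}. On each $I_n$ we use $|t-t_n|\le\tau$ and $|t-t_{1/2}|\le\tau$, square, integrate over $I_n$ and sum over $n$.

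Most terms then match an estimator directly. The data term gives $\FF_2$. The two $(\bI-\bLproj{n})$ terms give the first two parts of $\CC_2$. The projection-mismatch term $\alpha(\bar\ptl^B-\bar\ptl_n^B)\bm_h^n$ gives the first part of $\Xi_3$. The predictor terms $(\bm_h^n-\hat\bm_h^n)\times\ptl_t\bM_h$, $(\bm_h-\hat\bm_h)\lambda_h^n$, $(t-t_n)(\bar\ptl\bm_h^n-\bar\ptl\hat\bm_h^n)\times\bar\ptl^B\bm_h^n$ and $(t-t_n)(\bm_h^n-\hat\bm_h^n)\bar\ptl\lambda_h^n$ give $\QQ_2$, using $\int_{I_n}(t-t_n)^2\,\dt\lesssim\tau^3$. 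The quadratic terms $(t-t_n)^2\bar\ptl\bm_h^n\times\bar\ptl^2\bm_h^n$ and $(t-t_n)^2\bar\ptl\bm_h^n\bar\ptl\lambda_h^n$ contribute $\tau^5$ times the corresponding squared norms. These are part of $\tau^4\EE_4$ and $\tau^4\EE_3$.

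The main obstacle is the term $(t-t_n)\bPsi$. A naive bound only gives $\tau^3\norm{\bPsi}^2$, which is not of fourth order. The key idea, as in \cite{BaenschBrenner:2019}, is that $\bPsi$ is essentially a backward difference of the discrete equation. By \eqref{EQ:pointwise LLG}, for every $n\ge2$,
\begin{align*}
\alpha\bar\ptl_n^B\bm_h^n+\bLproj{n}(\hat\bm_h^n\times\bar\ptl^B\bm_h^n)-\Delta_h^n\bm_h^n+\bLproj{n}(\hat\bm_h^n\lambda_h^n)-\bff_h^n=0 .
\end{align*}
Write $\bar\ptl_n^B=\bar\ptl^B-(\bar\ptl^B-\bar\ptl_n^B)$ and $\bLproj{n}=\bI-(\bI-\bLproj{n})$, and apply $\bar\ptl$ to this identity between levels $n$ and $n-1$. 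The product rule for backward differences, $\bar\ptl(\ba^n\bb^n)=\bar\ptl\ba^n\,\bb^n+\ba^{n-1}\bar\ptl\bb^n$, then shows that $\bPsi$ differs from $\bar\ptl\bff_h^n$ only by the following terms:
\begin{itemize}[label={}]
\item $\alpha(\bar\ptl^2\bm_h^n-\bar\ptl\bar\ptl^B\bm_h^n)=-\frac{\alpha\tau}{2}\bar\ptl^3\bm_h^n$, controlled through $\bar\ptl^2_n\bm_h^n-\bar\ptl^2_{n-1}\bm_h^{n-1}$ and the mesh-change terms, giving $\EE_2$ and $\Xi_3$;
\item terms of the form $\tau\,\bar\ptl\hat\bm_h^n\times\bar\ptl^2\bm_h^n$, $\tau\,\hat\bm_h^{n-1}\times\bar\ptl^3\bm_h^n$ and $\tau\,\bar\ptl\hat\bm_h^n\bar\ptl\lambda_h^n$, giving $\EE_4$ and $\EE_3$;
\item $\bar\ptl$ of the $(\bI-\bLproj{n})$ and $(\bar\ptl-\bar\ptl_n)$ terms, giving the $\tau^3$ parts of $\CC_2$ and $\Xi_3$.
\end{itemize}
The remaining $\bar\ptl\bff_h^n$ term multiplied by $(t-t_n)$ is absorbed with the data term, by replacing $\bff_h^n$ with the linear interpolant $\bff_h$ in $\FF_2$. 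For $n=2$ the difference reaches back to the trapezoidal step, whose equation has a different form. This is why $\bm_h^{-1}$ is introduced in \eqref{EQ: mh -1}: it lets the level-$1$ equation be written in BDF form up to the defect $\II_1$. Each of these pieces carries a factor $\tau$, so $(t-t_n)\bPsi$ contributes $\tau^5$-weighted quantities, that is, $\tau^4$ times the $\EE$-estimators.

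The initial interval is treated the same way, using the defining formula \eqref{EQ: r_h^i}:
\begin{itemize}[label={}]
\item the data term gives $\FF_1$ and the $(\bI-\bP_0^1)$ terms give $\CC_1$;
\item the predictor terms, including the $\tau^2$ and $\tau^3$ weighted ones, give $\QQ_1$;
\item $\frac{\tau^2}{4}\bar\ptl\hat\bm_h^1\bar\ptl\lambda_h^1$, $(t-t_1)(t-t_{1/2})\bar\ptl\bm_h^1\times\bar\ptl^2\bm_h^2$ and $(t-t_1)^2\bar\ptl\bm_h^1\bar\ptl\lambda_h^1$ give $\II_3$;
\item $(t-t_{1/2})\bPsi_0$ gives $\tau^3\norm{\bPsi_0}^2=\II_2$ directly.
\end{itemize}
No cancellation is needed here, because only one interval is involved.
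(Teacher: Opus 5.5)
Your proposal is correct and is essentially the paper's own proof. It uses a term-by-term triangle inequality, rewrites $(t-t_n)\bPsi$ by subtracting the backward difference of the discrete equation \eqref{EQ:pointwise LLG} together with the discrete product rule, uses $\bm_h^{-1}$ from \eqref{EQ: mh -1} to write the trapezoidal step in BDF form up to the defect that produces $\II_1$, and bounds $\br_h^0$ directly with $\II_2=\tau^3\norm{\bPsi_0}^2$. The one step to state precisely is that $-\frac{\alpha\tau}{2}\bar\ptl^3\bm_h^n$ and $\alpha\bar\ptl\big((\bar\ptl^B-\bar\ptl^B_n)\bm_h^n\big)$ must be combined exactly, as in \eqref{EQ: transform ptlB}, into $-\frac{\alpha}{2}\big(\bar\ptl^2_n\bm_h^n-\bar\ptl^2_{n-1}\bm_h^{n-1}\big)+\alpha\bar\ptl\big((\bar\ptl-\bar\ptl_n)\bm_h^n\big)$; bounding them separately leaves terms of the form $\tau\,\bar\ptl\big((\bar\ptl^2-\bar\ptl^2_n)\bm_h^n\big)$ that no estimator covers.
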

\begin{proof}
    Recall that
    \begin{align*}
        \dual{\br_h, \bphi}
        &=
        \dual{\bff_h^n - \bff, \bphi} + {\alpha} \dual{(\bar \ptl^B - \bar \ptl_n^B) \bm_h^n, \bphi} 
            + \dual{(\bm_h^n - \hat \bm_h^n) \times \ptl_t \bM_h, \bphi}
            \\&\qqq
            + (t-t_n) \dual{(\bar \ptl \bm_h^n - \bar \ptl \hat \bm_h^n) \times \bar \ptl^B \bm_h^n, \bphi}
            + (t-t_n)^2 \dual{\bar \ptl \bm_h^n \times \bar \ptl^2 \bm_h^n, \bphi} 
            \\ &\qqq
            + \dual{(\bm_h - \hat \bm_h) \lambda_h^n, \bphi}
            + (t - t_n) \dual{(\bm_h^n - \hat \bm_h^n) \bar \ptl \lambda_h^n, \bphi}
            \\&\qqq
            + (t - t_n)^2 \dual{\bar \ptl \bm_h^n \bar \ptl \lambda_h^n, \bphi}
            + (t - t_n) \dual{\bPsi, \bphi}
            \\ &\qqq
            + \dual{(\bI - \bP_0^n) (\hat \bm_h^n \times \bar \ptl^B \bm_h^n), \bphi} + \dual{(\bI - \bP_0^n)(\hat \bm_h^n \lambda_h^n), \bphi},
    \end{align*}
    for all $\bphi \in \Lsym^2(0, t_1; \bV)$, where $\bPsi$ is given by \eqref{EQ: Psi},
    for $t \in I_n$ and $n \ge 2$.
    Although  $\bPsi$ is already computable a posteriori, we further simplify it to clarify different error components and obtain more manageable terms.
    We start by using equation \eqref{EQ:pointwise LLG},~i.e.,~
    \begin{align*}
        &{\alpha} \dual{\bar \ptl_n^B \bm_h^n + \bP_0^n (\hat \bm_h^n \times \bar \ptl^B \bm_h^n) - \Delta_h^n \bm_h^n + \bP_0^n (\hat \bm_h^n \lambda_h^n) - \bff_h^n, \bphi}
        = 0, \qq \forall \bphi \in \Lsym^2(0, t_1; \bV),
    \end{align*}
    to obtain
    \begin{align*}
        \dual{\bPsi, \bphi} &=
            \dual{{\alpha} \bar \ptl^2 \bm_h^n + \bar \ptl \hat \bm_h^n \times \bar \ptl^B \bm_h^n + \hat \bm_h^n \times \bar \ptl^2 \bm_h^n + \hat \bm_h^n \bar \ptl \lambda_h^n + \bar \ptl \hat \bm_h^n \lambda_h^n, \bphi}
            \\&\quad + \frac1\tau \dual{- {\alpha} \bar \ptl_n^B \bm_h^n - \bP_0^n (\hat \bm_h^n \times \bar \ptl^B \bm_h^n) - \bP_0^n (\hat \bm_h^n \lambda_h^n) + \bff_h^n, \bphi}
            \\&\quad + \frac1\tau  \dual{{\alpha} \bar \ptl_{n-1}^B \bm_h^{n-1} + \bP_0^{n-1} (\hat \bm_h^{n-1} \times \bar \ptl^B \bm_h^{n-1}) + \bP_0^{n-1} (\hat \bm_h^{n-1} \lambda_h^{n-1}) - \bff_h^{n-1}, \bphi} 
        \\&=
            \dual{\bar \ptl \bff_h^n, \bphi}
            + {\alpha} \dual{\bar \ptl^2 \bm_h^n - \frac{1}{\tau}( \bar \ptl_n^B \bm_h^n - \bar \ptl_{n-1}^B \bm_h^{n-1}), \bphi}
            + \dual{\hat \bm_h^n \bar \ptl \lambda_h^n + \bar \ptl \hat \bm_h^n \lambda_h^n, \bphi} 
            \\&\quad
            - \dual{\bar \ptl \big(\bP_0^n (\hat \bm_h^n \lambda_h^n)\big), \bphi}
            + \dual{\bar \ptl \hat \bm_h^n \times \bar \ptl^B \bm_h^n + \hat \bm_h^n \times \bar \ptl^2 \bm_h^n - \bar \ptl \big(\bP_0^n(\hat \bm_h^n \times \bar \ptl^B \bm_h^n)\big), \bphi}  
    \end{align*}
    for $t \in I_n$, $n \ge 3$. 
    Since $\bar \ptl_n^B \bm_h^n = \frac{\tau}{2} \bar \ptl_n^2 \bm_h^n + \bar \ptl_n \bm_h^n$ by \eqref{EQ: Def ptlB}, we have
    \begin{equation}
    \label{EQ: transform ptlB}
    \begin{aligned}
        \bar \ptl^2 \bm_h^n - \frac{1}{\tau} \big( \bar \ptl^B_n \bm_h^n - \bar \ptl^B_{n-1} \bm_h^{n-1} \big)
        &= \bar \ptl^2 \bm_h^n - \frac{\bar \ptl_n \bm_h^n - \bar \ptl_{n-1} \bm_h^{n-1}}{\tau} - \frac12 \big( \bar \ptl^2_n \bm_h^n - \bar \ptl^2_{n-1} \bm_h^{n-1} \big)
        \\ &=
        \bar \ptl (\bar \ptl - \bar \ptl_n ) \bm_h^n 
        - \frac12 \big( \bar \ptl^2_n \bm_h^n - \bar \ptl^2_{n-1} \bm_h^{n-1} \big).
    \end{aligned}
    \end{equation}
    Applying \eqref{EQ: transform ptlB} and adding and subtracting $\dual{\bar \ptl (\hat \bm_h^n \lambda_h^n) + \bar \ptl (\hat \bm_h^n \times \bar \ptl^B \bm_h^n), \bphi}$ leads to
    \begin{equation}
    \label{EQ: rewrite psi phi}
    \begin{split}\raisetag{-9ex}
        \dual{\bPsi, \bphi} &=
        \dual{\bar \ptl \bff_h^n, \bphi}
        - \frac{{\alpha}}2 \dual{\bar \ptl_n^2 \bm_h^n - \bar \ptl_{n-1}^2 \bm_h^{n-1}, \bphi} + {\alpha} \dual{\bar \ptl \big( (\bar \ptl - \bar \ptl_n) \bm_h^n\big), \bphi} 
        \\&\quad
        + \dual{\bar \ptl \big((\bI - \bP_0^n)(\hat \bm_h^n \times \bar \ptl^B \bm_h^n)\big), \bphi}
        + \dual{\hat \bm_h^n \bar \ptl \lambda_h^n + \bar \ptl \hat \bm_h^n \lambda_h^n - \bar \ptl (\hat \bm_h^n \lambda_h^n), \bphi}
        \\&\quad
        + \dual{\bar \ptl \hat \bm_h^n \times \bar \ptl^B \bm_h^n + \hat \bm_h^n \times \bar \ptl^2 \bm_h^n - \bar \ptl (\hat \bm_h^n \times \bar \ptl^B \bm_h^n), \bphi} 
        + \dual{\bar \ptl \big((\bI -\bP_0^n) (\hat \bm_h^n \lambda_h^n)\big), \bphi}.
    \end{split}
    \end{equation}
    Using that 
    \begin{align*}
        \bar \ptl (\hat \bm_h^n \times \bar \ptl^B \bm_h^n)
        = 
        \bar \ptl \hat \bm_h^n \times \bar \ptl^B \bm_h^n + \hat \bm_h^n \times \bar \ptl (\bar \ptl^B \bm_h^n) - \tau \bar \ptl \hat \bm_h^n \times \bar \ptl (\bar \ptl^B \bm_h^n)
    \end{align*}
    yields
    \begin{equation}
    \label{EQ: rh apost cross}
    \begin{aligned}
        &\bar \ptl \hat \bm_h^n \times \bar \ptl^B \bm_h^n + \hat \bm_h^n \times \bar \ptl^2 \bm_h^n - \bar \ptl (\hat \bm_h^n \times \bar \ptl^B \bm_h^n)
        \\&\qq=
        \bar \ptl \hat \bm_h^n \times \bar \ptl^B \bm_h^n + \hat \bm_h^n \times \bar \ptl^2 \bm_h^n
        \\&\qqq- \bar \ptl \hat \bm_h^n \times \bar \ptl^B \bm_h^n - \hat \bm_h^n \times \bar \ptl (\bar \ptl^B \bm_h^n) + \tau \bar \ptl \hat \bm_h^n \times \bar \ptl (\bar \ptl^B \bm_h^n)
        \\&\qq=
        - \frac\tau2  \hat \bm_h^n \times \bar \ptl^3 \bm_h^n 
        + \frac{\tau^2}2 \bar \ptl \hat \bm_h^n \times \bar \ptl^3 \bm_h^n
        + \tau \bar \ptl \hat \bm_h^n \times \bar \ptl^2 \bm_h^n
        \\&\qq=
        - \frac{\tau}2 \hat \bm_h^{n-1} \times \bar \ptl^3 \bm_h^n
        + \tau \bar \ptl \hat \bm_h^n \times \bar \ptl^2 \bm_h^n.
    \end{aligned}
    \end{equation}
    Moreover, we have
    \begin{align*}
        \bar \ptl (\hat \bm_h^n \lambda_h^n) &= 
        \bar \ptl \hat \bm_h^n \lambda_h^n + \hat \bm_h^{n-1} \bar \ptl \lambda_h^n 
        =
        \bar \ptl \hat \bm_h^n \lambda_h^n + \hat \bm_h^n \bar \ptl \lambda_h^n - \tau \bar \ptl \hat \bm_h^n \bar \ptl \lambda_h^n.
    \end{align*}
    Combining the above equations with \eqref{EQ: rewrite psi phi}, we conclude
    \begin{align*}
        \dual{\bPsi, \bphi} &=
            \dual{\bar \ptl \bff_h^n, \bphi}
            - \frac{{\alpha}}2 \dual{ \bar \ptl_n^2 \bm_h^n - \bar \ptl_{n-1}^2 \bm_h^{n-1}, \bphi} + {\alpha} \dual{\bar \ptl \big( (\bar \ptl - \bar \ptl_n) \bm_h^n\big), \bphi} + \tau \dual{\bar \ptl \hat \bm_h^n \bar \ptl \lambda_h^n, \bphi} 
            \\&\quad
            + \dual{\bar \ptl \big((\bI - \bP_0^n)(\hat \bm_h^n \times \bar \ptl^B \bm_h^n)\big), \bphi}
            + \dual{\bar \ptl \big((\bI -\bP_0^n) (\hat \bm_h^n \lambda_h^n)\big), \bphi}
            \\&\quad
            - \frac{\tau}2 \dual{  \hat \bm_h^{n-1} \times \bar \ptl^3 \bm_h^n , \bphi} 
            + \tau \dual{\bar \ptl \hat \bm_h^n \times \bar \ptl^2 \bm_h^n, \bphi} \eqqcolon g^n(t).            
    \end{align*}
    For the case $t \in I_2$, we have due to the definition of $\bm_h^{-1}$ given by \eqref{EQ: mh -1} and the trapezoidal scheme \eqref{EQ: init trapezoidal m1}
    \begin{equation}
    \begin{aligned}
            {\alpha} \bar \ptl^B \bm_h^1 
        &= 
            \frac{{\alpha}}{\tau} \big( \frac32 \bm_h^1 - 2 \bm_h^0 + \frac12 \bm_h^{-1} \big)
        \\&=
            2 {\alpha} \bar \ptl \bm_h^1 - \bff_h^0 - \Delta_h^0 \bm_h^0 + \bP_0^1(\hat \bm_h^0 \times \bar \ptl \bm_h^1) + \bP_0^1 (\lambda_h^0 \hat \bm_h^0)
        \\ &=  
            \bff_h^1 + \Delta_h^1\bm_h^1 - \bP_0^1(\hat \bm_h^1 \times \bar \ptl \bm_h^1) + \bP_0^1 (\lambda_h^0 \hat \bm_h^0) - 2 \bP_0^1(\lambda_h^{1/2} \hat \bm_h^{1/2}).
    \end{aligned}
    \end{equation}
    Using that 
    $\lambda_h^0 \hat \bm_h^0 + \lambda_h^1 \hat \bm_h^1 - 2 \lambda_h^{1/2} \hat \bm_h^{1/2} = \frac12 \tau^2 \bar \ptl \lambda_h^1 \bar \ptl \hat \bm_h^1$,
    we get 
    \begin{equation}
    \label{EQ: bar ptl^B bm_h^1}
    \begin{aligned}        
        {\alpha} \bar \ptl^B \bm_h^1 
        &= 
        \bff_h^1 + \Delta_h^1\bm_h^1 - \bP_0^1(\hat \bm_h^1 \times \bar \ptl \bm_h^1) - \bP_0^1 (\lambda_h^1 \hat \bm_h^1) + \frac12 \tau^2  \bP_0^1(\bar \ptl \lambda_h^1 \bar \ptl \hat \bm_h^1)
        \\ &= 
        \bff_h^1 + \Delta_h^1\bm_h^1 - \bP_0^1(\hat \bm_h^1 \times \bar \ptl^B \bm_h^1) - \bP_0^1 (\lambda_h^1 \hat \bm_h^1) + \frac12 \tau^2  \bP_0^1(\bar \ptl \lambda_h^1 \bar \ptl \hat \bm_h^1) 
        \\&\qq+ \frac{\tau}{2} \bP_0^1 (\hat \bm_h^1 \times \bar \ptl^2 \bm_h^1),
    \end{aligned}
    \end{equation}
    where 
    ${\alpha} \bar \ptl^2 \bm_h^1 = \frac{{\alpha}}{\tau^2} \big( \bm_h^1 - 2 \bm_h^0 + \bm_h^{-1} \big)$ with $\bm_h^{-1}$ given by \eqref{EQ: mh -1}.
    Thus, we obtain for $t \in I_2$ using \eqref{EQ: bar ptl^B bm_h^1}
    \begin{equation*}       
        \begin{aligned}
        \dual{\bPsi, \bphi} =
            g^2(t) + \frac\tau2 \dual{\bP_0^1(\bar \ptl \lambda_h^1 \bar \ptl \hat \bm_h^1)} + \frac12  \dual{ \bP_0^1( \hat \bm_h^1 \times \bar \ptl^2 \bm_h^1), \bphi}. 
        \end{aligned}
    \end{equation*} 
    
    Finally, the triangle inequality yields for the residual $\br_h$ defined in \eqref{EQ: residual}
    \begin{align*}
        \norm{\br_h} &\lesssim
        \norm{\bff_h - \bff} 
        + \norm{(\bar \ptl^B - \bar \ptl_n^B) \bm_h^n}
        + \norm{(\bm_h^n - \hat \bm_h^n) \times \ptl_t \bM_h}
        \\ &\qq 
        + \tau \norm{(\bar \ptl \bm_h^n - \bar \ptl \hat \bm_h^n) \times \bar \ptl^B \bm_h^n}
        + \tau^2 \norm{\bar \ptl \bm_h^n \times \bar \ptl^2 \bm_h^n}
        \\ &\qq 
        + \norm{(\bm_h - \hat \bm_h) \lambda_h^n}
        + \tau \norm{(\bm_h^n - \hat \bm_h^n)\bar \ptl \lambda_h^n}
        + \tau^2 \norm{\bar \ptl \bm_h^n \bar \ptl \lambda_h^n}
        \\ &\qq
        + \norm{(\bI - \bP_0^n) (\hat \bm_h^n \times \bar \ptl^B \bm_h^n)} 
        + \norm{(\bI - \bP_0^n)(\hat \bm_h^n \lambda_h^n)}
        \\&\qq
        + \tau \norm{\bar \ptl_n^2 \bm_h^n - \bar \ptl_{n-1}^2 \bm_h^{n-1}}
        + \tau \norm{\bar \ptl \big( (\bar \ptl - \bar \ptl_n) \bm_h^n\big)}
        + \tau^2 \norm{\bar \ptl \hat \bm_h^n \bar \ptl \lambda_h^n}
        \\ &\qq 
        + \tau \norm{\bar \ptl \big((\bI - \bP_0^n)(\hat \bm_h^n \times \bar \ptl^B \bm_h^n)\big)}
        + \tau \norm{\bar \ptl \big( (\bI - \bP_0^n)(\hat \bm_h^n \lambda_h^n)\big)}
        \\&\qq
        + \tau^2 \norm{\hat \bm_h^{n-1} \times \bar \ptl^3 \bm_h^n}
        + \tau^2 \norm{\bar \ptl \hat \bm_h^n \times \bar \ptl^2 \bm_h^n}
        \\ &\qq 
        + \tau^2 \chi_{I_2}(t) \norm{\bP_0^1(\bar \ptl \lambda_h^1 \bar \ptl \hat \bm_h^1)}
        + \tau \chi_{I_2}(t) \norm{\bP_0^1(\hat \bm_h^1 \times \bar \ptl^2 \bm_h^1)},
    \end{align*}
    for $t \in I_n$ and $n \ge 2$,
    where $\chi_{I_2}$ denotes the indicator function onto $I_2$. Integrating from $t = t_{k-1}$ to $t_k$ and summing over $k=2$ to $N$ concludes the proof for $\br_h$.

    Using the definition of $\br_h^0$ \eqref{EQ: r_h^i}, we obtain
    \begin{align*}
        \norm{\br_h^0} 
        &\lesssim 
        \norm{\bff_h - \bff}
        + \norm{(\bm_h^1 - \hat \bm_h^1) \times \ptl_t \bM_h}
        + \tau \norm{(\bar \ptl \bm_h^1 - \bar \ptl \hat \bm_h^1) \times \bar \ptl \bm_h^1}
        \\ &\qq 
        + \tau^2 \norm{\bar \ptl \bm_h^1 \times \bar \ptl^2 \bm_h^2}
        + \norm{(\bm_h - \hat \bm_h) \lambda_h^1}
        + \tau \norm{(\bm_h^1 - \hat \bm_h^1) \bar \ptl \lambda_h^1}
        + \tau^2 \norm{\bar \ptl \hat \bm_h^1 \bar \ptl \lambda_h^1}
        \\ &\qq
        + \tau^2 \norm{\bar \ptl \bm_h^1 \bar \ptl \lambda_h^1}
        + \norm{(\bI - \bP_0^1)(\hat \bm_h^{1/2} \times \bar \ptl \bm_h^1)}
        + \norm{(\bI - \bP_0^1)(\hat \bm_h^{1/2} \lambda_h^{1/2})} 
        + \tau \norm{\bPsi_0}.
    \end{align*}
\end{proof}

We proceed with the $\Lsym^\infty$-error bounds for the error between the time-space and temporal three-point reconstruction.

\begin{Lemma}
   \label{LEM: apost linfty W-M}
   Let $\bm_h^n$ be the solution of \eqref{EQ: discrete LLG}, $\bM_h$ be the corresponding three-point reconstructions defined in \eqref{EQ:def-Mh_uni} - \eqref{EQ: initial tp-reconstruction} and $\bW$ be the three-point time-space reconstruction defined in \eqref{EQ: three-point time-space reconstruction}. 
    Then
    \begin{align}
        \label{EQ: Linfty1}
        \norm{\bW - \bM_h}_{\Linf(0, T; \bLinf)}^2 &\lesssim \max\limits_{0 \le n \le N} \etainfty{\bm_h^n},
        \\
        \label{EQ: Linfty2}
        \norm{\ptl_t (\bW - \bM_h)}_{\Linf(0, T; \bLinf)}^2 &\lesssim \max\limits_{1 \le n \le N} \etainfty{\bar \ptl_n \bm_h^n} + \max\limits_{1 \le n \le N} \etainfty{(\bar \ptl - \bar \ptl_n) \bm_h^n},
        \\
        \label{EQ: Linfty3}
        \norm{\grad(\bW - \bM_h)}_{\Linf(0, T; \bLinf)}^2 &\lesssim \max\limits_{0 \le n \le N} \etaWinfty{\bm_h^n},
        \\
        \label{EQ: Linfty4}
        \norm{\grad \ptl_t (\bW - \bM_h)}_{\Linf(0, T; \bLinf)}^2 &\lesssim \max\limits_{1 \le n \le N} \etaWinfty{\bar \ptl_n \bm_h^n} + \max\limits_{1 \le n \le N} \etaWinfty{(\bar \ptl - \bar \ptl_n) \bm_h^n},
    \end{align}
    where we employ the estimators from Assumption~\ref{AS: Linfty estimators}.
\end{Lemma}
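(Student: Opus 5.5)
The proof reuses the pointwise-in-time representations of $\bW - \bM_h$ and $\ptl_t(\bW - \bM_h)$ from the proof of Lemma~\ref{LEM: apost dt (W-M)}. The only change is that the $\bLsym^2$/$\bHk{1}$ norms become $\bLinf$ and $\bWkp{1}{\infty}$ seminorms, and the estimators in \eqref{EQ: ell. rec. a post} are replaced by those of Assumption~\ref{AS: Linfty estimators}. All four bounds have the same structure, so I treat them together, with $\norm{\cdot}_\star$ standing for either $\norm{\cdot}_{\bLinf}$ or $\norm{\grad\,\cdot}_{\bLinf}$.

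For \eqref{EQ: Linfty1} and \eqref{EQ: Linfty3}, fix $t\in I_n$, $n\ge2$. By definition, $(\bW-\bM_h)(t)$ is the quadratic-in-time interpolant of the nodal values $\bRR\bm_h^j-\bm_h^j$, $j\in\{n-2,n-1,n\}$. Writing $\bar\ptl$ and $\bar\ptl^2$ as divided differences, it is a combination of these three values. The coefficients are polynomials in $(t-t_n)/\tau$ and $(t-t_{n-1})/\tau$, and they are bounded uniformly, since $|t-t_j|\le2\tau$ on $I_n$. This is exactly \eqref{EQ: bW - bM_h} with $\norm{\cdot}_\star$ in place of the $L^2$/$H^1$ norms; the interval $I_1$ is handled the same way with the indices $0,1,2$. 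Taking the supremum over $t$ and applying \eqref{EQ: apost ell rec Linfty} to each nodal term then gives the bound by $\max_n\etainfty{\bm_h^n}$, respectively $\max_n\etaWinfty{\bm_h^n}$.

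For \eqref{EQ: Linfty2} and \eqref{EQ: Linfty4}, I use the identities \eqref{EQ: W-M In} and \eqref{EQ: W-M I1}. They express $\ptl_t(\bW-\bM_h)$ on $I_n$ as a combination of $\bar\ptl(\bm_h^j-\bRR\bm_h^j)$, $j\in\{n-1,n\}$ (or $j\in\{1,2\}$ on $I_1$), with coefficients bounded by $5/2$. Each such term I then split, exactly as in \eqref{EQ: ptlt W - Mh eq}, using linearity of $\bRR$ (so that $\bar\ptl\bRR\bm_h^j=\bRR\bar\ptl\bm_h^j$):
\begin{align*}
\bar\ptl(\bm_h^j-\bRR\bm_h^j)=(\bI-\bRR)\bar\ptl_j\bm_h^j+(\bI-\bRR)(\bar\ptl-\bar\ptl_j)\bm_h^j.
\end{align*}
The first summand lies in $\bV_h^j$ and is bounded via Assumption~\ref{AS: Linfty estimators} by $\etainfty{\bar\ptl_j\bm_h^j}$ (resp.\ $\etaWinfty{\cdot}$). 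The second summand is handled by interpreting the estimator as applied to $(\bar\ptl-\bar\ptl_j)\bm_h^j$, which is what the statement records.

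The main obstacle is this last point. The function $(\bar\ptl-\bar\ptl_j)\bm_h^j$ is not in $\bV_h^j$ when the meshes change, so one has to make sure that $\bRR$, defined through $\Delta_h^j$ for $\bHspace$-functions in Definition~\ref{DEF: Discrete laplacoan}, and the corresponding $\bLinf$-estimator make sense for it. I would argue that the elliptic reconstruction is still the solution of a Neumann problem with right-hand side $-\Delta_h^j(\cdot)\in\bV_h^j$, so the assumed $\bLinf$/$\bWkp{1}{\infty}$ a posteriori bound transfers. This is in line with how the term $\Xi_1$ was treated in the $L^2$ case. Finally, squaring and using $(a+b)^2\le2(a^2+b^2)$ gives all four estimates.
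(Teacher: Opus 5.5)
Your proposal matches the paper's proof. The paper obtains \eqref{EQ: Linfty1} and \eqref{EQ: Linfty3} by combining \eqref{EQ: bW - bM_h} with \eqref{EQ: apost ell rec Linfty}, and \eqref{EQ: Linfty2} and \eqref{EQ: Linfty4} by repeating the splitting behind \eqref{EQ: ptlt W - Mh eq} via \eqref{EQ: W-M In}--\eqref{EQ: W-M I1}. You only add details the paper leaves implicit: the bounded interpolation coefficients, and applying the estimator to $(\bar\ptl-\bar\ptl_n)\bm_h^n\notin\bV_h^n$.

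One caveat: you justify $\bar\ptl\bRR\bm_h^j=\bRR\bar\ptl\bm_h^j$ by linearity, but on changing meshes this does not follow from linearity alone, since the reconstruction depends on the mesh index. The paper relies on exactly the same identity, so your argument proves the statement at the paper's level of rigor.
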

\begin{proof}
    Combining \eqref{EQ: bW - bM_h} and \eqref{EQ: apost ell rec Linfty} yields \eqref{EQ: Linfty1}. Following \eqref{EQ: ptlt W - Mh eq}, we obtain \eqref{EQ: Linfty2}. Finally, \eqref{EQ: Linfty3} and \eqref{EQ: Linfty4} follow in the same way.
\end{proof}

Finally, we compute the a~posteriori bound for the normalization error of the time-space reconstruction.
\begin{Lemma}
   Let $\bm_h^n$ be the solution of \eqref{EQ: discrete LLG}, $\bM_h$ be the corresponding three-point reconstructions defined in \eqref{EQ:def-Mh_uni} - \eqref{EQ: initial tp-reconstruction}
    and $\bW$ be the three-point time-space reconstruction defined in \eqref{EQ: three-point time-space reconstruction}. 
    Then
    \begin{align*}
        &\int_{0}^{t_N} \norm{(\bI - \bP(\bW)) \ptl_t \bW}_{\bHk{1}}^2 \dt
        \\&\qq\lesssim
        \int_{0}^{t_N} \Big( \norm{(\bI - \bP(\bM_h)) \ptl_t \bM_h}_{\bHk{1}}^2 + \norm{\bM_h - \bW}_{\bHk{1}}^2 + \norm{\ptl_t(\bM_h - \bW)}_{\bHk{1}}^2 \Big) \dt,
        \\&\qq\lesssim \PP + \Lambda_2 + \Lambda_3 + \Xi_1,
    \end{align*}
    where the (hidden) constant depends on $\norm{\bM_h}_{\bWkp{1}{\infty}}$, $\norm{\ptl_t \bM_h}_{\bWkp{1}{\infty}}$, $\etaWinfty{\bm_h^n}$
    and
    where the estimators are defined in Definition~\ref{DEF: error est.}.
\end{Lemma}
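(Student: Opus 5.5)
The proof splits $(\bI - \bP(\bW))\ptl_t\bW$ by adding and subtracting the corresponding quantities for $\bM_h$, and then bounds each difference with Lemma~\ref{LEM: Lipschitz bound} and the preceding lemmas. Since $\bI - \bP(\bv) = \bv\bv^\trp$, we write
\begin{align*}
    (\bI - \bP(\bW))\ptl_t\bW
    = (\bI - \bP(\bM_h))\ptl_t\bM_h
    + \big(\bP(\bM_h) - \bP(\bW)\big)\ptl_t\bM_h
    + (\bI - \bP(\bW))\ptl_t(\bW - \bM_h).
\end{align*}
After squaring and integrating over $(0,t_N)$, the first term is exactly the integrand of $\PP$.

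For the second term we apply Lemma~\ref{LEM: Lipschitz bound} with $\tilde\bm = \bM_h$, $\bm = \bW$ and $\bv = \ptl_t\bM_h$. This gives
\begin{align*}
    \norm{(\bP(\bM_h) - \bP(\bW))\ptl_t\bM_h}_{\bHk{1}} \le \gamma(\bW, \bM_h, \ptl_t\bM_h)\,\norm{\bM_h - \bW}_{\bHk{1}},
\end{align*}
where $\gamma$ involves $\norm{\bM_h}_{\bWkp{1}{\infty}}$, $\norm{\ptl_t\bM_h}_{\bWkp{1}{\infty}}$ and $\norm{\bW}_{\bLinf}$. We control $\norm{\bW}_{\bLinf} \le \norm{\bM_h}_{\bLinf} + \norm{\bW - \bM_h}_{\bLinf}$ via \eqref{EQ: Linfty1}. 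For the third term, the product rule gives $\norm{\bW\bW^\trp\bv}_{\bHk{1}} \lesssim (\norm{\bW}_{\bLinf}^2 + \norm{\bW}_{\bLinf}\norm{\grad\bW}_{\bLinf})\norm{\bv}_{\bHk{1}}$ with $\bv = \ptl_t(\bW - \bM_h)$. Here $\norm{\grad\bW}_{\bLinf}$ is bounded by $\norm{\grad\bM_h}_{\bLinf}$ plus the quantity in \eqref{EQ: Linfty3}, which is controlled by $\max_n \etaWinfty{\bm_h^n}$. These are the constants listed in the statement; the $\eta(\cdot;\Linf)$ terms are absorbed by the standing assumption $\etaWinfty{\bm_h^n}\le C$ together with Poincaré-type control, or simply listed among the dependencies. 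Combining the three terms yields the first inequality.

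The second inequality follows from the previous lemmas. Lemma~\ref{LEM: apost dt (W-M)}, estimate \eqref{EQ: apodt W-M 2.}, gives $\int_0^{t_N}\norm{\ptl_t(\bM_h - \bW)}_{\bHk{1}}^2\dt \lesssim \Lambda_2 + \Xi_1$. For $\int_0^{t_N}\norm{\bM_h - \bW}_{\bHk{1}}^2\dt$ we use \eqref{EQ: bW - bM_h} with $\star = \bHk{1}$. This bounds the integrand on $I_n$ by $\eta(\bm_h^k;\Hsym^1)$ for $k\in\{n-2,n-1,n\}$, so the integral is at most $\sum_n \tau\,\eta(\bm_h^n;\Hsym^1) \le t_N\Lambda_1$. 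The $\bL^2$ part alone would be controlled by $\Lambda_3$.

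This last point is where the argument needs care. The right-hand side of the statement contains $\Lambda_3$, which measures only the $\bL^2$ part, while the gradient part of $\norm{\bM_h - \bW}_{\bHk{1}}^2$ is naturally controlled by $\sum_n \tau\,\eta(\bm_h^n;\Hsym^1)$. I expect the intended reading is that this sum is bounded by $t_N\Lambda_1$, with $\Lambda_1$ already present in Theorem~\ref{THM: main theorem}. Otherwise the gradient can be obtained by writing $\bM_h - \bW$ on $I_n$ through $\bm_h^n - \bRR\bm_h^n$ and telescoping with time differences, whose $\bHk{1}$-norms give $\Lambda_2$ and $\Xi_1$, and then absorbing the initial value via $\Lambda_1$. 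Either way the total is bounded by estimators that appear in \eqref{EQ: a post full}. The remaining care is to track that every $\bLinf$ and $\bWkp{1}{\infty}$ norm of $\bW$ and $\ptl_t\bW$ is replaced by the corresponding norm of $\bM_h$ plus the terms of Lemma~\ref{LEM: apost linfty W-M}.
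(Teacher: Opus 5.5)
Your argument is the paper's proof. It uses the same three-term splitting, Lemma~\ref{LEM: Lipschitz bound} for $(\bP(\bM_h)-\bP(\bW))\ptl_t\bM_h$, the bound $\norm{\bW\bW^\trp\bv}_{\bHk{1}}\lesssim\norm{\bW}_{\bWkp{1}{\infty}}^2\norm{\bv}_{\bHk{1}}$ with $\norm{\bW}_{\bWkp{1}{\infty}}$ controlled through $\bM_h$ and Lemma~\ref{LEM: apost linfty W-M}, and Lemma~\ref{LEM: apost dt (W-M)} for the remaining integrals. Your caveat is correct: the paper asserts $\int_0^{t_N}\gamma^2\norm{\bM_h-\bW}_{\bHk{1}}^2\dt\lesssim\Lambda_3$, but $\Lambda_3$ contains only $\eta(\cdot;\Lsym^2)$ and cannot control the gradient part, so the honest bound carries the extra $\sum_n\tau\,\eta(\bm_h^n;\Hsym^1)\lesssim t_N\Lambda_1$ you identified (harmless for Theorem~\ref{THM: main theorem}). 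Likewise, your three-argument $\gamma(\bW,\bM_h,\ptl_t\bM_h)$ and your use of \eqref{EQ: Linfty1} for $\norm{\bW}_{\bLinf}$ are more accurate than the paper's $\gamma(\bM_h,\ptl_t\bM_h)$ and its appeal to \eqref{EQ: Linfty3} alone. The Poincar\'e shortcut you mention would need the mean of $\bRR\bm_h^n-\bm_h^n$ to be fixed, since the Neumann reconstruction is only determined up to constants, so listing $\etainfty{\bm_h^n}$ among the dependencies is the safe choice.
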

\begin{proof}   
    By adding and subtracting $(\bI - \bP(\bW)) \ptl_t \bM_h + \bP(\bM_h) \ptl_t \bM_h$ and using the triangle inequality, we obtain
    \begin{align*}
        \norm{(\bI - \bP(\bW)) \ptl_t \bW}_{\bHk{1}} 
        &\le 
        \norm{(\bI - \bP(\bW)) \ptl_t \bM_h}_{\bHk{1}} + \norm{(\bI - \bP(\bW)) \ptl_t (\bW - \bM_h)}_{\bHk{1}}
        \\&\le 
        \norm{(\bI - \bP(\bM_h)) \ptl_t \bM_h}_{\bHk{1}}
        + \norm{(\bP(\bM_h) - \bP(\bW)) \ptl_t \bM_h}_{\bHk{1}}
        \\&\qq+ \norm{(\bI - \bP(\bW)) \ptl_t (\bW - \bM_h)}_{\bHk{1}}.
    \end{align*}
    Since $\bP(\bW) = \bI - \bW \bW^\trp$, we estimate
    \begin{align*}
        \norm{(\bI - \bP(\bW)) \ptl_t (\bW - \bM_h)}_{\bHk{1}} 
        = \norm{\bW \bW^\trp \ptl_t (\bW - \bM_h)}_{\bHk{1}}
        \le \norm{\bW}_{\bWkp{1}{\infty}}^2 \norm{\ptl_t (\bW - \bM_h)}_{\bHk{1}}.
    \end{align*}
    Using the Lipschitz-type bound of the orthogonal projection in Lemma~\ref{LEM: Lipschitz bound}, the a posteriori bound of the elliptic reconstruction \eqref{EQ: ell. rec. a post} and \eqref{EQ: bW - bM_h}, we obtain
    \begin{align*}
        \int_{0}^{t_N} \norm{(\bP(\bM_h) - \bP(\bW)) \ptl_t \bM_h}_{\bHk{1}}^2 \dt &\le \int_{0}^{t_N} \gamma(\bM_h, \ptl_t \bM_h)^2 \norm{\bM_h - \bW}_{\bHk{1}}^2 \dt 
        \lesssim 
        \Lambda_3.
    \end{align*}
    Finally, applying Lemma~\ref{LEM: apost dt (W-M)} to estimate $\norm{\ptl_t (\bW - \bM_h)}_{\bHk{1}}$ 
    and bounding $\norm{\bW}_{\bWkp{1}{\infty}}^2$ by the triangle inequality 
    \begin{align}
        \label{EQ: triangle eq. Linfty apost bound}
        \norm{\bW}_{\bWkp{1}{\infty}} \le \norm{\bW - \bM_h}_{\bWkp{1}{\infty}} + \norm{\bM_h}_{\bWkp{1}{\infty}},
    \end{align}
    where $\norm{\bW - \bM_h}_{\bWkp{1}{\infty}}$ is bounded by \eqref{EQ: Linfty3}, concludes the proof.
\end{proof}

Finally, applying the triangle inequality as in \eqref{EQ: triangle eq. Linfty apost bound} and Lemma~\ref{LEM: apost linfty W-M}, we obtain the a posteriori computable error bounds for the coefficients $\gamma_{0, \bW}$ and $\gamma_{1, \bW}$ defined in \eqref{EQ: def gammas}.

\section{Variable time-steps}\label{SEC:VARIABLE-TS}

The preceding analysis was carried out for constant time-step sizes to simplify the presentation.
Since practical adaptive algorithms rely on variable time-steps, we now extend the \emph{a posteriori} analysis to this setting.
The proofs follow the same arguments as in the uniform time-step case with accordingly adapted \BDF{2} operator \eqref{EQ: Def ptlB} and three-point reconstruction \eqref{EQ:def-Mh_uni} as in \cite[Sct.~7.1]{BaenschBrenner:2019}. 
We therefore restrict ourselves to deriving the modified error indicators and state the resulting a~posteriori estimate.
Throughout this section, we assume a fixed spatial mesh for simplicity. 
The extension to changing meshes is straightforward as in the uniform case.

Let $\kappa_n \coloneqq \frac{\tau_n}{\tau_{n-1}}$ denote the consecutive step-size ratio for $n \ge 1$.
Then, the variable-step \BDF{2} operator is defined by
\begin{align}
\label{EQ: DEF ptlB for variable step}
\bar \ptl^B \bm_h^n
&\coloneqq
\frac1{\tau_n}
\left(
\frac{1+2\kappa_n}{1+\kappa_n}\bm_h^n
-(1+\kappa_n)\bm_h^{n-1}
+\frac{\kappa_n^2}{1+\kappa_n}\bm_h^{n-2}
\right)
=
\bar\ptl\bm_h^n
+\tau_n\frac{\kappa_n}{1+\kappa_n}\bar\ptl^2\bm_h^n .
\end{align}
The corresponding three-point reconstruction is given by
\begin{equation}
\label{EQ: temporal quadratic polynomial variable step}
\begin{aligned}
\bM_h(t)
&\coloneqq
\bm_h(t)
+
\frac{\kappa_n}{1+\kappa_n}
(t-t_n)(t-t_{n-1})
\bar\ptl^2\bm_h^n,
\\
\partial_t\bM_h(t)
&=
\bar\ptl^B\bm_h^n
+
\frac{2\kappa_n}{1+\kappa_n}
(t-t_n)\bar\ptl^2\bm_h^n,
\end{aligned}
\end{equation}
for $t\in I_n=(t_{n-1},t_n]$, $n\ge2$.
On the first interval $I_1$, we use the reconstruction defined in \eqref{EQ: initial tp-reconstruction}.

Following the proof of Lemma~\ref{LEMMA: parabolic err eq} with the modified reconstruction yields the residual
\begin{align*}
\tilde\bPsi
&\coloneqq
\alpha\frac{2\kappa_n}{1+\kappa_n}\bar\ptl^2\bm_h^n
+\bar\ptl\hat\bm_h^n\times\bar\ptl^B\bm_h^n
+\frac{2\kappa_n}{1+\kappa_n}\hat\bm_h^n\times\bar\ptl^2\bm_h^n
-\bar\ptl(\Delta_h^n\bm_h^n)
+\hat\bm_h^n\bar\ptl\lambda_h^n
+\bar\ptl\hat\bm_h^n\lambda_h^n.
\end{align*}

To derive the modified estimator $\tilde\EE_2$, we rewrite \eqref{EQ: transform ptlB} as
\begin{equation}
\label{EQ: rewrite var 1.1}
\begin{aligned}
&\frac{2\kappa_n}{1+\kappa_n}\bar\ptl^2\bm_h^n
-
\frac1{\tau_n}
\left(
\bar\ptl^B\bm_h^n
-
\bar\ptl^B\bm_h^{n-1}
\right)
=
\frac1{\kappa_n}
\left(
-\frac{\kappa_n}{1+\kappa_n}\bar\ptl^2\bm_h^n
+
\frac{\kappa_{n-1}}{1+\kappa_{n-1}}
\bar\ptl^2\bm_h^{n-1}
\right).
\end{aligned}
\end{equation}
Proceeding as in the proof of Lemma~\ref{LEM: apost rh and rh0} therefore yields
\[
\tilde\EE_2
\coloneqq
\sum_{n=2}^N
\frac{\tau_n^3}{\kappa_n^2}
\left\|
\frac{\kappa_n}{1+\kappa_n}\bar\ptl^2\bm_h^n
-
\frac{\kappa_{n-1}}{1+\kappa_{n-1}}
\bar\ptl^2\bm_h^{n-1}
\right\|^2.
\]

Similarly, rewriting \eqref{EQ: rh apost cross} using \eqref{EQ: DEF ptlB for variable step} leads to
\begin{equation*}
\begin{aligned}
&\bar\ptl\hat\bm_h^n\times\bar\ptl^B\bm_h^n
+
\frac{2\kappa_n}{1+\kappa_n}\hat\bm_h^n\times\bar\ptl^2\bm_h^n
-
\bar\ptl(\hat\bm_h^n\times\bar\ptl^B\bm_h^n)
\\
&\qquad=
\frac{\kappa_n-1}{\kappa_n+1}
\hat\bm_h^n\times\bar\ptl^2\bm_h^n
-
\tau_n
\frac{\kappa_n}{1+\kappa_n}
\hat\bm_h^{n-1}\times\bar\ptl^3\bm_h^n
+
\tau_n
\bar\ptl\hat\bm_h^n\times\bar\ptl^2\bm_h^n,
\end{aligned}
\end{equation*}
which gives rise to the additional estimator
\[
\tilde\EE_7
\coloneqq
\sum_{n=2}^N
\tau_n^3
\frac{(\kappa_n-1)^2}{(\kappa_n+1)^2}
\|
\hat\bm_h^n\times\bar\ptl^2\bm_h^n
\|^2.
\]

All remaining estimators coincide with their counterparts for uniform time-steps.
For completeness, we collect the resulting estimators in the following definition.

\begin{Definition}[Error estimators for variable time-steps]
	\label{DEF: error est. variable step}
	Let $\seq{\bm_h^n}{0}{N}$, $\seq{\hat \bm_h^n}{0}{N}$, $\seq{\bff_h^n}{0}{N}$, $\seq{\lambda_h^n}{0}{N}$ be sequences with $\bm_h^n, \bff_h^n \in \bV_h^n$, $\lambda_h^n \in \spacelam{n}$ and $\hat \bm_h^n \in V$ for all $n \ge 0$
	and define
	\begin{align}
		\label{EQ: mh -1 tau var}
		{\alpha} \bm_h^{-1} &\coloneqq {\alpha} \bm_h^1 - 2 \tau_1 \big( \bff_h^0 + \Delta_h^0 \bm_h^0 - \bLproj{1}(\hat \bm_h^0 \times \bar \ptl \bm_h^1) - \bLproj{1} (\lambda_h^0 \hat \bm_h^0) \big).
	\end{align} 
	With the definition of $\bM_h$ in \eqref{EQ: temporal quadratic polynomial variable step}, we introduce the \textbf{time error estimators}
	\begin{equation*}
		\begin{aligned}
			\qq&\tilde \EE_1 \coloneqq \max\limits_{2 \le n \le N} \tau_n^4 \norm{\grad \bar \ptl^2_n \bm_h^n}^2,\qq 
			&&
			\tilde \EE_2 \coloneqq \sum\limits_{n=2}^N \frac{\tau_n^3}{\kappa_n^2} \norm{\frac{\kappa_n}{\kappa_n + 1} \bar \ptl^2 \bm_h^n - \frac{\kappa_{n-1}}{\kappa_{n-1} + 1} \bar \ptl^2 \bm_h^{n-1}}^2,
			\\
			\alignedintertext{\centering $\displaystyle 
				\tilde \EE_3 \coloneqq \sum\limits_{n=1}^N \tau_n^5 \big(\norm{\bar \ptl \bm_h^n \bar \ptl \lambda_h^n}^2 + \norm{\bar \ptl \hat \bm_h^n \bar \ptl \lambda_h^n}^2 \big), 
				$}
			\alignedintertext{\centering $\displaystyle \tilde \EE_4\coloneqq \sum\limits_{n = 2}^N \tau_n^5 \Big(\norm{\bar \ptl \bm_h^n \times \bar \ptl^2 \bm_h^n}^2 + \norm{\bar \ptl \hat \bm_h^n \times \bar \ptl^2 \bm_h^n}^2 +\norm{\hat \bm_h^{n-1} \times \bar \ptl^3 \bm_h^n}^2\Big),$}
			&\tilde \EE_6 \coloneqq \sum\limits_{n=2}^N \tau_n^5 \norm{\bar \ptl^2 \bm_h^n}^2,
			&&\tilde \EE_7 \coloneqq \sum\limits_{n=2}^N 
			\tau_n^3 \frac{(\kappa_n - 1)^2}{(\kappa_n + 1)^2} \norm{\hat \bm_h^n \times \bar \ptl^2 \bm_h^n}^2,
		\end{aligned}
	\end{equation*}
	the \textbf{reconstruction error estimator}
	\begin{align*}
		\tilde \EE_5 \coloneqq \sum\limits_{n = 2}^N \tau_n^5 \norm{\bar \ptl^2 (- \Delta_h^n)\bm_h^n}^2,
	\end{align*}
	the \textbf{space error estimators}
	\begin{align*}
		&\tilde \Lambda_2 \coloneqq \sum\limits_{n=1}^N \tau_n \, \eta(\bar\ptl_n \bm_h^n; \Hsym^1),
		\qq \tilde \Lambda_3 \coloneqq \sum\limits_{n=0}^N \tau_n \,\eta(\bm_h^n; \Lsym^2),
	\end{align*}
	where $\eta$ is the elliptic a~posteriori estimator defined in \eqref{EQ: eta estimator standard},
	the \textbf{finite element space conforming estimators}
	\begin{align*}
		&\tilde \CC_2 \coloneqq \sum\limits_{n=2}^N 
		\Big(  
		\tau_n \norm{(\bI - \bLproj{n}) (\hat \bm_h^n \times \bar \ptl^B \bm_h^n)}^2
		+ \tau_n \norm{(\bI - \bLproj{n})(\hat \bm_h^n \lambda_h^n)}^2
		\\
		&\qqq+ \tau_n^3 \norm{\bar \ptl \big((\bI - \bLproj{n}) (\hat \bm_h^n \lambda_h^n) \big)}^2
		+ \tau_n^3 \norm{\bar \ptl \big( (\bI - \bLproj{n}) (\hat \bm_h^n \times \bar \ptl^B \bm_h^n) \big)}^2
		\Big)
	\end{align*}
	and the \textbf{extrapolation error estimators}
	\begin{align*}
		&\tilde \QQ_2 \coloneqq 
		\sum\limits_{n=2}^N \Big( \int_{I_n} \big(\norm{(\bm_h^n - \hat \bm_h^n) \times \ptl_t \bM_h}^2 + \norm{(\bm_h - \hat \bm_h) \lambda_h^n}^2 \big) \dt
		\\&\qqq
		+ \tau_n^3 \norm{(\bar \ptl \bm_h^n - \bar \ptl \hat \bm_h^n) \times \bar \ptl^B \bm_h^n}^2
		+ \tau_n^3 \norm{(\bm_h^n - \hat \bm_h^n)\bar \ptl \lambda_h^n}^2 \Big).
	\end{align*}
\end{Definition}

We obtain the following theorem for variable time-steps (and non-changing mesh).
\begin{theorem}[A posteriori error estimate]
	\label{THM: apost variable tau}
	Let $\bm$ be the exact solution of \eqref{EQ: linearized LLG analytical} and $\bm_h^n$ the solution of \eqref{EQ: discrete LLG} for $n = 2, \dots, N$ and the solution of \eqref{EQ: init trapezoidal m1} for $n=1$. Assume $\tau = \tau_1 = \tau_2$ and $V_h^n = V_h$ for all $n = 0, \dots, N$. Then
	\begin{equation*}   
		\begin{aligned}
			\norm{\grad (\bm - \bm_h)}_{\Linf(t_0, t_N; \bLspace)}^2 &\lesssim 
			\FF_1 + \FF_2 
			+ \tilde \EE_1 + \tilde \EE_2 + \tilde \EE_3  + \tilde \EE_4 + \tilde \EE_5 + \tilde \EE_6 + \tilde \EE_7 
			+ \Lambda_1 + \tilde \Lambda_2 + \tilde \Lambda_3
			\\&\qq
			+ \PP
			+ \CC_1 + \tilde \CC_2 
			+ \QQ_1 + \tilde \QQ_2 
			+ \II_1 + \II_2 + \II_3,
		\end{aligned}
	\end{equation*}
	where the (hidden) constant depends on $\alpha$, $\norm{\bM_h}_{\bWkp{1}{\infty}}$, $\norm{\ptl_t \bM_h}_{\bWkp{1}{\infty}}$, $\etaWinfty{\bm_h^n}$ and $\norm{\lambda_h}_{\Linf}$
	and the estimators are given in Definition~\ref{DEF: error est.} and Definition~\ref{DEF: error est. variable step}.
\end{theorem}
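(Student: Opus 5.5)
We will mirror the proof of Theorem~\ref{THM: main theorem}, since the energy argument there never used uniformity of the time grid. Only the explicit form of $\bM_h$, $\ptl_t\bM_h$ and of the residual changes. First we redo Lemma~\ref{LEMMA: parabolic err eq} with the variable-step operator \eqref{EQ: DEF ptlB for variable step} and the reconstruction \eqref{EQ: temporal quadratic polynomial variable step}. Since $V_h^n=V_h$, all projected differences coincide with the unprojected ones, so the changing-mesh terms vanish: $\Xi_1=\Xi_2=\Xi_3=0$ and $\bar\ptl^k_n=\bar\ptl^k$. The residual keeps the structure of \eqref{EQ: residual}, with $\bPsi$ replaced by $\tilde\bPsi$. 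This works because $\ptl_t\bM_h-\bar\ptl^B\bm_h^n=\frac{2\kappa_n}{1+\kappa_n}(t-t_n)\bar\ptl^2\bm_h^n$, and $\bm_h\times\ptl_t\bM_h$ is expanded exactly as in \eqref{EQ: ell err eq rewrite 2.1}.

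The Gronwall step is then copied verbatim. We test the rewritten error equation with $\ptl_t\errW$ and treat $b^{\bm}(\errlam,\ptl_t\errW)$ through $(\bI-\bP(\bm))\ptl_t\bW$ together with Lemma~\ref{LEM: Lipschitz bound}. Integrating over $[t_1,t_N]$ and applying Gronwall bounds $\norm{\errW(t_N)}_{\bHk1}^2$ by $\norm{\errW(t_1)}_{\bHk1}^2$ plus $\int(\AA^2+\norm{(\bI-\bP(\bW))\ptl_t\bW}_{\bHk1}^2)\dt$. The constants $\gamma_{0,\bW}$, $\gamma_{1,\bW}$ and $c_\infty$ are controlled as before by Lemma~\ref{LEM: apost linfty W-M}, which explains the stated dependence of the hidden constant. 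Because $\tau_1=\tau_2=\tau$, the first interval and the initial lemma apply unchanged and give $\FF_1$, $\CC_1$, $\QQ_1$ and $\II_1,\II_2,\II_3$.

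Next we bound each remaining term with $\tau$ replaced by $\tau_n$ on $I_n$.
\begin{itemize}
\item Lemmas~\ref{LEM: Mh - mh} and \ref{LEMMA: laplh w - W} give $\tilde\EE_1$ and $\tilde\EE_5$, using $|(t-t_n)(t-t_{n-1})|\le\tau_n^2$ and $\frac{\kappa_n}{1+\kappa_n}\le1$.
\item Lemma~\ref{LEM: apost dt (W-M)} gives $\Lambda_1$, $\tilde\Lambda_2$, $\tilde\Lambda_3$ and $\tilde\EE_6$. Here \eqref{EQ: W-M In} becomes a combination of $\bar\ptl(\bI-\bRR)\bm_h^n$ and $\bar\ptl(\bI-\bRR)\bm_h^{n-1}$ with coefficients involving $\kappa_n$.
\item The projection term gives $\PP$ exactly as before.
\end{itemize}

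For the residual we follow Lemma~\ref{LEM: apost rh and rh0}. We subtract the discrete equations at steps $n$ and $n-1$ and divide by $\tau_n$, which rewrites $\tilde\bPsi$ via \eqref{EQ: rewrite var 1.1} and the variable-step analogue of \eqref{EQ: rh apost cross}. The first identity produces $\tilde\EE_2$; the factor $\tau_n^3/\kappa_n^2$ comes from $(t-t_n)^2\le\tau_n^2$ integrated over $I_n$. The second produces $\tilde\EE_4$ and the new term $\tilde\EE_7$, whose coefficient $\frac{\kappa_n-1}{\kappa_n+1}$ is not small in $\tau_n$, so it enters with weight $\tau_n^3$. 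The $\lambda$-products give $\tilde\EE_3$, and the conformity and extrapolation terms give $\tilde\CC_2$ and $\tilde\QQ_2$.

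The main obstacle is verifying these two algebraic identities, and in particular the cross-product one. We would expand $\bar\ptl(\hat\bm_h^n\times\bar\ptl^B\bm_h^n)$ using the product rule $\bar\ptl(ab)^n=\bar\ptl a^n\, b^n+a^{n-1}\bar\ptl b^n$. Then we write $\bar\ptl(\bar\ptl^B\bm_h^n)=\bar\ptl^2\bm_h^n+\bar\ptl(\tau_n\frac{\kappa_n}{1+\kappa_n}\bar\ptl^2\bm_h^n)$. Since $\tau_n\frac{\kappa_n}{1+\kappa_n}$ varies with $n$, this second term is not simply $\frac{\tau_n\kappa_n}{1+\kappa_n}\bar\ptl^3\bm_h^n$. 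The leftover from the differing coefficients would have to be absorbed, or shown to combine, into the $\hat\bm_h^n\times\bar\ptl^2\bm_h^n$ term; we expect this to be exactly where the factor $\frac{\kappa_n-1}{\kappa_n+1}$ originates.
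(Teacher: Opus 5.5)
Your outline follows the paper's own route. The paper also only sketches this theorem: it reruns the uniform-step proof with $\tau\to\tau_n$, uses the modified residual $\tilde\bPsi$, the identity \eqref{EQ: rewrite var 1.1} for the $\alpha$-part, and a variable-step analogue of \eqref{EQ: rh apost cross} for the cross-product part. Your remarks on $\Xi_1=\Xi_2=\Xi_3=0$, the unchanged first interval and the weight $\tau_n^3/\kappa_n^2$ in $\tilde\EE_2$ are correct.

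\textbf{The gap.} It sits exactly at the step you single out, and your guess about how that step closes is wrong. Write $c_n\coloneqq\frac{\kappa_n}{1+\kappa_n}$. Since $\bar\ptl^B\bm_h^{n-1}$ carries the weight $\tau_{n-1}c_{n-1}$, the product rule gives
\begin{align*}
\bar\ptl\hat\bm_h^n\times\bar\ptl^B\bm_h^n+2c_n\,\hat\bm_h^n\times\bar\ptl^2\bm_h^n-\bar\ptl\bigl(\hat\bm_h^n\times\bar\ptl^B\bm_h^n\bigr)
&=2c_n\,\hat\bm_h^n\times\bar\ptl^2\bm_h^n-\hat\bm_h^{n-1}\times\bar\ptl\bigl(\bar\ptl^B\bm_h^n\bigr),
\\
\bar\ptl\bigl(\bar\ptl^B\bm_h^n\bigr)&=(1+c_n)\,\bar\ptl^2\bm_h^n-\frac{c_{n-1}}{\kappa_n}\,\bar\ptl^2\bm_h^{n-1}.
\end{align*}
The factor $\frac{\kappa_n-1}{\kappa_n+1}=2c_n-1$ does not come from your leftover. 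It comes from the mismatch between the weight $2c_n$ in $\tilde\bPsi$ and the coefficient $1$ of $\bar\ptl^2\bm_h^n$ in the second line, and it would be present even if $\tau_nc_n$ did not vary. The leftover is a separate term: the left-hand side equals
\begin{align*}
&\frac{\kappa_n-1}{\kappa_n+1}\,\hat\bm_h^n\times\bar\ptl^2\bm_h^n-\tau_nc_n\,\hat\bm_h^{n-1}\times\bar\ptl^3\bm_h^n+\tau_n\,\bar\ptl\hat\bm_h^n\times\bar\ptl^2\bm_h^n
\\
&\qquad+\Bigl(\frac{c_{n-1}}{\kappa_n}-c_n\Bigr)\hat\bm_h^{n-1}\times\bar\ptl^2\bm_h^{n-1}.
\end{align*}
The last term has an $O(1)$ coefficient. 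After multiplication by $(t-t_n)$ it enters at order $\tau_n^3$, and it is covered neither by $\tilde\EE_4$ nor by the $n$-th term of $\tilde\EE_7$.

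It also cannot be absorbed into that term, as you hoped. Shifting it onto $\hat\bm_h^n\times\bar\ptl^2\bm_h^n$ changes the total coefficient to $\frac{c_{n-1}-c_n}{\kappa_n}$. This is nonzero for $\kappa_n=1\neq\kappa_{n-1}$, which is exactly when the $n$-th term of $\tilde\EE_7$ has weight zero. The paper's displayed variable-step identity drops this term as well; it holds only if $\tau_nc_n=\tau_{n-1}c_{n-1}$. So following the paper does not fill this step.

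\textbf{How to close it.} The missing idea is to charge part of the leftover to the \emph{previous} step. Using $c_n-\frac12=\frac{\kappa_n-1}{2(\kappa_n+1)}$, the left-hand side equals exactly
\begin{align*}
&-\frac{1}{2\kappa_n}\frac{\kappa_n-1}{\kappa_n+1}\,\hat\bm_h^n\times\bar\ptl^2\bm_h^n
+\frac{1}{2\kappa_n}\frac{\kappa_{n-1}-1}{\kappa_{n-1}+1}\,\hat\bm_h^{n-1}\times\bar\ptl^2\bm_h^{n-1}
\\
&\qquad+\Bigl(2c_n+\frac{\kappa_n-1}{2\kappa_n(\kappa_n+1)}\Bigr)\tau_n\,\bar\ptl\hat\bm_h^n\times\bar\ptl^2\bm_h^n
-\frac{\tau_n}{2\kappa_n}\,\hat\bm_h^{n-1}\times\bar\ptl^3\bm_h^n .
\end{align*}
\begin{itemize}
\item The first term is bounded by the $n$-th term of $\tilde\EE_7$.
\item The second term is bounded by the $(n-1)$-st term of $\tilde\EE_7$, using $\tau_n^3=\kappa_n^3\tau_{n-1}^3$. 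At $n=2$ it vanishes, since $\kappa_2=1$ and $\bm_h^{-1}$ is built with the step $\tau_1$, so $\kappa_1=1$.
\item The last two terms are bounded by $\tilde\EE_4$.
\end{itemize}
So the stated estimator list does suffice, but only with a hidden constant that depends on upper and lower bounds for $\kappa_n$. Your (and the paper's) variable-step version of Lemma~\ref{LEM: apost dt (W-M)} already carries such a dependence implicitly. On $I_n$ you must compare $\tau_n\norm{\bar\ptl(\bm_h^{n-1}-\bRR\bm_h^{n-1})}_{\bHk{1}}^2=\kappa_n\tau_{n-1}\norm{\bar\ptl(\bm_h^{n-1}-\bRR\bm_h^{n-1})}_{\bHk{1}}^2$ with $\tilde\Lambda_2$.

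Alternatively, mirroring \eqref{EQ: rewrite var 1.1}, the left-hand side equals $2c_n\tau_n\,\bar\ptl\hat\bm_h^n\times\bar\ptl^2\bm_h^n-\kappa_n^{-1}\,\hat\bm_h^{n-1}\times\bigl(c_n\bar\ptl^2\bm_h^n-c_{n-1}\bar\ptl^2\bm_h^{n-1}\bigr)$. For a normalized predictor ($|\hat\bm_h^{n-1}|\le 1$) this is controlled by $\tilde\EE_2+\tilde\EE_4$ with no step-ratio constant, and $\tilde\EE_7$ is then unnecessary.
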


Hence the complete \emph{a~posteriori} error analysis developed above applies equally to variable time-steps without requiring any additional analytical ingredients beyond the modified three-point reconstruction.

\section*{Acknowledgments}
This work is based on the author's doctoral dissertation, completed at Karlsruhe Institute of Technology.
The author is grateful for the valuable supervision of W. Dörfler during various stages of this work. 
This research was funded by the Deutsche Forschungsgemeinschaft (DFG, German  Research Foundation) -- Project-ID 258734477 -- SFB 1173.

\printbibliography
\end{document}